\newtheorem{theorem}{Theorem}[section]
\newtheorem{lemma}[theorem]{Lemma}
\newtheorem{definition}[theorem]{Definition}
\newtheorem{proposition}[theorem]{Proposition}
\newtheorem{remark}[theorem]{Remark}
\newcommand\RR{{{\mathbb R}}}
\newcommand{\eps}{\varepsilon}
\newcommand{\reff}[1]{(\ref{#1})}
\newcommand{\p}{\partial}
\newcommand{\La}{\Delta}
\newcommand{\beq}{\begin{eqnarray*}}
\newcommand{\enq}{\end{eqnarray*}}
\newcommand{\beqq}{\begin{eqnarray}}
\newcommand{\enqq}{\end{eqnarray}}
\newcommand{\ben}{\begin{equation}\label}
\newcommand{\enn}{\end{equation}}
\newcommand{\bef}{\begin{proof}}
\newcommand{\enf}{\end{proof}}
\begin{document}

\title
{Cauchy problem for the incompressible
Navier-Stokes equation with an external force}

\author[ Di Wu]
{Di Wu}
\date{}
\address{\noindent \textsc{Di Wu, Institute de Math\'ematiques de Jussieu-Paris Rive Gouche UMR CNRS 7586, Univerist\'e Paris Diderot, 75205, Paris, France}}
\email{di.wu@imj-prg.fr}

\keywords{Navier-Stokes equation, Besov class,  long-time behavior, regularity}

\begin{abstract}
In this paper we focus on the Cauchy problem for the incompressible Navier-Stokes equation with a rough external force. If the given rough external force is small, we prove the local-in-time existence of this system for any initial data belonging to the critical Besov space $\dot{B}_{p,p}^{-1+\frac{3}{p}}$, where $3<p<\infty$. Moreover, We show the long-time behavior of the priori global solutins constructed by us. Also, we give three kinds of uniqueness results of the forced Navier-Stokes equations.
 
\end{abstract}

\maketitle

\section{Introduction}
We study the incompressible Navier-Stokes equations in $\RR^3$,
\beq
(NSf)\left\{
  \begin{array}{ll}
   \p_t u_f-\La u_f+u_f\cdot\nabla u_f=-\nabla p+f,\\
   \nabla\cdot u_f=0,\\
   u_f|_{t=0}=u_0.
  \end{array}
\right.
\enq 
Here $u_f$ is a three-component vector field $u_f=(u_{1,f},u_{2,f},u_{3,f})$ representing the velocity of the fluid, $p$ is a scalar denoting  the pressure, and both are unknown functions of the space variable $x\in\RR^3$ and of the time variable $t>0$. Finally $f=(f_1,f_2,f_3)$ denotes a given external force defined on $[0,T]\times\RR^3$ for some $T\in\RR_+\cup\{\infty\}$. We recall  the Navier-Stokes scaling : $\forall\lambda>0$, the vector field $u_f$ is a solution to $(NSf)$ with initial data $u_0$ if $u_{\lambda,f_{\lambda}}$ is a solution to $(NSf_{\lambda})$ with initial data $u_{0,\lambda}$, where
\beq
&u_{\lambda,f_{\lambda}}(t,x):=\lambda u_f(\lambda^2t,\lambda x),~f_{\lambda}(t,x):=\lambda^3 f(\lambda^2t,\lambda x),\\
& p_{\lambda}(t,x):=\lambda^2 p(\lambda^2 t,\lambda x)~~  \mathrm{and}~~u_{0,\lambda}:=\lambda u_0(\lambda x).
\enq 
Spaces which are invariant under the Navier-Stokes scaling are called critical spaces for the Navier-Stokes equation. Examples of critical spaces of initial data for the Navier-Stokes equation in 3D are:
\beq
 L^3(\RR^3)\hookrightarrow\dot{B}^{-1+\frac{3}{p}}_{p,q}(\RR^3)(p<\infty,q\leq\infty)\hookrightarrow\mathrm{BMO}^{-1}\hookrightarrow\dot{B}^{-1}_{\infty,\infty}
 \enq 
(See below for definitions).

To put our results in perspective, let us first recall related results concerning the Cauchy problem for the classical (the case $f\equiv0$) Navier-Stokes equation with possibly irregular initial data:
\beq
(NS)\left\{
  \begin{array}{ll}
   \p_t u-\La u+u\cdot\nabla u=-\nabla p,\\
   \nabla\cdot u=0,\\
   u|_{t=0}=u_0.
  \end{array}
\right.
\enq 
In the pioneering work \cite{L}, J. Leray introduced the concept of weak solutions to $(NS)$ and proved global existence for datum $u_0\in L^2$. However, their uniqueness has remained an open problem.
In 1984, T. Kato \cite{Kato} initiated the study of $(NS)$ with initial data belonging to the space $L^3(\RR^3)$ and obtained global existence in a subspace of $C([0,\infty),L^3(\RR^3))$ provided the norm $\|u_0\|_{L^3(\RR^3)}$ is small enough.
 The existence result for initial data small in the Besov space $\dot{B}^{-1+\frac{3}{p}}_{p,q}$ for $p,\in [1,\infty)$ and $q\in[1,\infty]$ can be found in \cite{CMP,gip}. The function spaces $L^3(\RR^3)$ and  $\dot{B}^{-1+\frac{3}{p}}_{p,q}$ for $(p,q)\in [1,\infty)^2$ both guarantee the existence of local-in-time solution for any initial data. In 2001, H. Koch and D. Tataru \cite{KT} showed that global well-posedness holds as well for small initial data in the space $\mathrm{BMO}^{-1}$. This space consists of vector fields whose components are derivatives of  $BMO$ functions.  On the other hand, it has been shown by J. Bourgain and N.  Pavlović \cite{BP} that the Cauchy problem with initial data in $\dot{B}^{-1}_{\infty,\infty}$ is ill-posed no matter how small the initial are. Also P. Germain showed the ill-posedness for initial data in $\dot{B}^{-1}_{\infty,q}$ for any $q>2$, see \cite{G}.  
 
However, the situation is more subtle when it comes to forced Navier-Stokes equations. In 1999, M. Cannone and F. Planchon \cite{CP} worked on constructing global mild solutions in $C([0,T),L^3(\RR^3))$ to the Cauchy problem for the Navier-Stokes equations with an external force. They showed the local-in-time wellposedness for any initial data $u_0\in L^3(\RR^3)$, if the external force $f$ can be written as $f=\nabla\cdot V$ and $\sup_{0<t<T}t^{1-\frac{3}{p}}\|V\|_{L^{\frac{p}{2}}}$ is small enough for some $3<p\leq 6$ and $T>0$. Also they showed there exists a unique global solution to $(NSf)$, provided $T=\infty$ and $u_0$ is small enough in $\dot{B}^{-1+\frac{3}{q}}_{q,\infty}$ with $3<q<\frac{3p}{6-p}$. Later in 2005, M. Cannone and G. Karch \cite{CK} proved that there exists a solution $u_f\in C_w(\RR_+,L^{3,\infty}(\RR^3))$ to $(NSf)$, if the initial data $u_0\in L^{3,\infty}$ is small enough and the external force $f$ satisfies that 
\beqq\label{fL3}
\sup_{t>0}\Big\|\int_0^t e^{(t-s)\La }\mathbb{P}fds\Big\|_{L^{3,\infty}}
\enqq 
is small enough depending on the norm of the bilinear operator $B$ (defined in \reff{Buv}) in $L^{\infty}(\RR_+,L^{3,\infty})$.

The basic approach to obtain the above results is, in principle, always the same. One first transforms the Navier-Stokes equations $(NSf)$ into an integral equation,
\beqq\label{mild-f}
u_f(t)=e^{t\La }u_0+\int_0^t e^{(t-s)\La }\mathbb{P}f(s)ds+B(u_f,u_f)(t)
\enqq
where
\beqq\label{Buv}
B(u,v):=-\frac{1}{2}\int_0^t e^{(t-s)\La }\mathbb{P}\nabla\cdot(u\otimes v+v\otimes u)ds,
\enqq  
$\mathbb{P}$ being the projection onto divergence free vector fields. It is customary to obtain the existence of a strong global $(T=\infty)$ or local $(T<\infty)$ solution $u_f\in X_T$ of \reff{mild-f}, with $X_T$ being an abstract critical Banach space, by means of the standard contraction lemma. For example, in \cite{CMP, CK} the terms $e^{t\La }u_0$ and $\int_0^t e^{(t-s)\La }\mathbb{P}f(s)ds$ are treated as the first point of the iteration and they require that $e^{t\La }u_0$, $\int_0^t e^{(t-s)\La }\mathbb{P}f(s)ds$  both belong to the corresponding Banach space $X_T$. That is why in \cite{CMP} $V$ needs to have a suitable decay in time and in \cite{CK} the smallness is measured in $L^{3,\infty}(\RR^3)$ and the initial data $u_0$ is restricted to $L^{3,\infty}$. The big difference between \cite{CP} and \cite{CK} is the following: in \cite{CP} the external force has good regularity and $e^{t\La }u_0$ belongs to Kato's space for initial data belonging to $\dot{B}^{s_p}_{p,\infty}$ for some $p>3$ (see Definition \ref{k}), which allows the fixed point lemma to work in Kato's space. Therefore the solutions in \cite{CP} belong to $C([0,T^*),L^3)$; however in \cite{CK}, the external force is rough, which limits the regularity of solution. Therefore in \cite{CK} the solutions to \cite{CK} only belong to $L^{\infty}_t(L^{3,\infty})$, even for small smooth initial data. That is the reason why these solutions  lack  uniqueness, unless the solution is small in $L^{\infty}_t(L^{3,\infty})$.

In this paper we consider $(NSf)$ with an external force given in \cite{CK}, however the class of initial data is different. More precisely, we consider the force $f$ satisfying : $f\in C(\RR_+,\mathcal{S}'(\RR^3))$ with for any $t>0$
\beq
\int_0^te^{(t-s)\La }\mathbb{P}fds\in L^{\infty}(\RR_+,L^{3,\infty}),
\enq 
which belongs to $C_w(\RR_+, L^{3,\infty}(\RR^3))$, see \cite{CK}. Under a smallness assumption on $f$ (controlled by a universal small positive constant depending on the singularity of initial data),
we first show 
 the local and global existence to $(NSf)$  for initial data $u_0$ belonging to $\dot{B}^{s_p}_{p,p}$. Moreover we obtain that the above solution belongs to $L^{\infty}_t(L^{3,\infty})$ when its initial data is in $L^{3,\infty}\cap\dot{B}^{s_p}_{p,p}$ for $p>3$.
 Then we show the long-time behavior and stability of the above priori global solutions with initial data in $L^{3,\infty}\cap\dot{B}^{s_p}_{p,p}$.
 We need to mention that the uniqueness of solutions in $L^{\infty}_t(L^{3,\infty})$, even for smooth initial data, is a still open problem. However we show that if the difference between the above solution and another solution to $(NSf)$ with the same initial data belongs to $C([0,T],L^{3,\infty})$ or has finite energy on some interval $[0,T]$, then they are equal on $[0,T]$.

The rest of the paper is organized as follows. In Section 2 we give some notations and the main results of this paper. Section 3 addresses the proof of the existence and uniqueness of solutions to $(NSf)$ with initial data $u_0$ belonging to $\dot{B}^{s_p}_{p,p}$. Section 4 is devoted to the long-time behavior and stability of a priori global solution to $(NSf)$ described in Section 2. The last section is devoted to  a regularity result via an iteration. In the appendix, we recall several known results and properties of solutions in Besov spaces. 
\section{Notations and Main Results}
Let us first recall the definition of Besov spaces, in dimension $d\geq1$.
\begin{definition}\label{besov}
	Let $\phi$ be a function in $\mathcal{S}(\RR^d)$ such that $\hat{\phi}=1$ for $|\xi|\leq 1$ and $\hat{\phi}=0$ for $|\xi|>2$, and define $\phi_j:=2^{dj}(2^{j}x)$. Then the frequency localization operators are defined by 
	\beq
	S_j:=\phi_j*\cdot,~~\La _j:=S_{j+1}-S_j.
	\enq 
	Let $f$ be in $\mathcal{S}'(\RR^d)$. We say $f$ belongs to $\dot{B}^{s}_{p,q}$ if 
	\begin{enumerate}
      \item the partial sum $\sum_{j=-m}^{m}\La_j f$ converges to $f$ as a tempered distribution if $s<\frac{d}{p}$ and after taking the quotient with polynomials if not, and
      \item 
      \beq
      \|f\|_{\dot{B}^{s}_{p,q}}:=\|2^{js}\|\La_j f\|_{L^p_x}\|_{\ell^q_j}<\infty.
      \enq 
    \end{enumerate}
\end{definition}

We refer to \cite{CL} for the introduction of the following type of space in the context of the Navier-Stokes equations.
\begin{definition}\label{time-besov}
	Let $u(\cdot,t)\in\dot{B}^{s}_{p,q}$ for a.e. $t\in (t_1,t_2)$ and let $\La_j$ be a frequency localization with respect to the $x$ variable (see Definition \ref{besov}). We shall say that $u$ belongs to $\mathcal{L}^\rho([t_1,t_2],\dot{B}^{s}_{p,q})$ if 
	\beq
	\|u\|_{\mathcal{L}^\rho([t_1,t_2],\dot{B}^{s}_{p,q})}:=\|2^{js}\|\La_j u\|_{L^{\rho}([t_1,t_2]L^p_x})\|_{\ell^q_j}<\infty.
	\enq 
\end{definition}
Note that for $1\leq\rho_1\leq q\leq\rho_2\leq\infty$, we have
\beq
L^{\rho_1}([t_1,t_2],\dot{B}^{s}_{p,q})\hookrightarrow\mathcal{L}^{\rho_1}([t_1,t_2],\dot{B}^{s}_{p,q})\hookrightarrow\mathcal{L}^{\rho_2}([t_1,t_2],\dot{B}^{s}_{p,q})\hookrightarrow L^{\rho_2}([t_1,t_2],\dot{B}^{s}_{p,q}).
\enq 
Let us introduce the following notation (also used in \cite{gkp}): we define $s_p:=-1+\frac{3}{p}$ and 
\beq
\mathbb{L}^{a:b}_{p,\bar{p}}(t_1,t_2):=\mathcal{L}^{a}([t_1,t_2];\dot{B}^{s_p+\frac{2}{a}}_{p,\bar{p}})\cap\mathcal{L}^{b}([t_1,t_2];\dot{B}^{s_p+\frac{2}{b}}_{p,\bar{p}}), \mathbb{L}^{a:b}_{p}(t_1,t_2):=\mathbb{L}^{a:b}_{p,\bar{p}}(t_1,t_2)
\enq 
\beq
\mathbb{L}^a_{p}(t_1,t_2):=\mathbb{L}^{a:a}_{p}(t_1,t_2),~\mathbb{L}^{a:b}_{p}(T):=\mathbb{L}^{a:b}_{p}(0,T)
\mathrm{and}~~\mathbb{L}^{a:b}_{p}[T<T^*]:=\cap_{T<T^*}\mathbb{L}^{a:b}_{p}(T).
\enq 
\begin{remark}\label{notation}
	We point out that according to our notations, $u\in\mathbb{L}^{a:b}_{p}[T<T^*]$ merely means that $u\in\mathbb{L}^{a:b}_{p}(T)$ for any $T<T^*$ and does not imply that $u\in\mathbb{L}^{a:b}_{p}(T^*)$ (the notation does not imply any uniform control as $T\nearrow T^*$).
\end{remark}
\begin{definition}\label{k}
   Let $p\geq 3$. Kato's space is defined as follow,
   \beq
   K_p:=\{u\in C(\RR_+,L^p(\RR^3)):\|u\|_{K_p}:=\sup_{t>0}t^{\frac{1}{2}-\frac{3}{2p}}\|u(t)\|_{L^p(\RR^3)}<\infty\}.
   \enq
	
\end{definition}

In this paper we are also interested in the weak-strong uniqueness of solutions to $(NSf)$. We introduce the following notations. We define that for any $T\in \RR_+\cup\{+\infty\}$
\beq
E(T)=L^{\infty}([0,T^*),L^2)\cap L^{2}([0,T^*),\dot{H}^1)
\enq  
and
\beq
E_{loc}(T)=L^{\infty}_{loc}([0,T),L^2)\cap L^{2}_{loc}([0,T),\dot{H}^1).
\enq

We also recall the definition of Lorentz spaces $L^{p,q}$ with $1<p<\infty$ and $1\leq  q\leq \infty$.
\begin{definition}
	Let $(X,\lambda)$ be a measure space. Let $f$ be a scalar-valued $\lambda$-measurable function and 
	\beq
	\lambda_f(s):=\lambda\{x: f(x)>s\}.
	\enq
	Then the re-arrangement function $f^*$ of $f$ is defined by:
	\beq
	f^*(t):=\inf\{s:\lambda_f(s)\leq t\}.
	\enq 
	And for any $1<p<\infty$, the Lorentz spaces $L^{p,q}$ is defined by:
	\beq
	L^{p,q}(\RR^d):=\{ f:\RR^d\to\mathbb{C},~~\|f\|_{L^{p,q}}<\infty\},
	\enq 
	where
	\beq
	\|f\|_{L^{p,q}}=\left\{
  \begin{array}{ll}
   \frac{q}{p}\Big[\int_0^{\infty}\big(t^{\frac{1}{p}}f^*(t)\big)dt\Big]^{\frac{1}{q}},~~q<\infty ,\\
   \sup_{t>0}\{t^{\frac{1}{p}}f^*(t)\},~~q=\infty.
  \end{array}
\right.
	\enq 
\end{definition}

We note that it is standard to use the above as a norm even if it does not satisfy the triangle inequality since one can find an equivalent norm that makes the space  into a Banach space.  In particular, $L^{p,\infty}$ agrees with the weak-$L^p$ (or Marcinkiewicz space):
\beq
L^{p^*}(\RR^d):=\{f:\RR^d\to\mathbb{C}:\|f\|_{L^{p^*}}<\infty\},
\enq 
which is equipped the following quasi-norm
\beq
\|f\|_{L^{p^*}}:=\sup_{t>0}t[\lambda_f(t)]^{\frac{1}{p}}.
\enq

To deal with external forces and for simplicity of notation we introduce the following space (introduced in \cite{CK}),
\beq 
\mathcal{Y}=\big\{f\in C(\RR_+,\mathcal{S}'(\RR^3)): \int_0^t e^{(t-s)\La }\mathbb{P}f(s)ds\in C_w(\RR_+,L^{3,\infty})\}
\enq 
equipped with the norm
\beq
\|f\|_{\mathcal{Y}}:=\sup_{t>0}\Big\|\int_0^t e^{(t-s)\La }\mathbb{P}f(s)ds\Big\|_{L^{3,\infty}}.
\enq

\begin{remark}
We mention that $\mathcal{Y}$ contains many rough external forces.

For example, 
	    \begin{enumerate}
	    	\item For every $g\in C_w(\RR_+,L^{\frac{3}{2},\infty})$, $f:=\nabla g\in \mathcal{Y}$ and $\|f\|_{Y}\lesssim \|g\|_{L^{\infty}(\RR_+,L^{\frac{3}{2}\infty})}$ (see Lemma 3.2 in \cite{CK}).
	    	\item Every time-independent $f$ satisfying $\La ^{-1}f\in L^{3,\infty}$ belongs to $\mathcal{Y}$ (see Theorem 4.3 in \cite{bbis}).
	    	\item By Lemma 3.4 in \cite{CK}, $\mathcal{Y}$ contains some really rough external force: $f:=(c_1\delta_0,c_2\delta_0,c_3\delta_0)$, where $\delta_0$ stands for the dirac mass.
	    \end{enumerate}
	    \end{remark}
According to Theorem 2.1 in \cite{CK}, there exists a constant $\eps_0>0$ such that if $f\in \mathcal{Y}$ and  $u_0\in  L^{3,\infty}$ satisfy that  $\|u_0\|_{L^{3,\infty}}+\|f\|_{\mathcal{Y}}<\eps_0$, there exists a unique solution to $(NSf)$ with initial data $u_0$ and external force $f$, denoted by $NSf(u_0)$, which belongs to $C_w(\RR_+,L^{3,\infty})$ such that 
\beq
\|NSf(u_0)\|_{L^{\infty}(\RR_+,L^{3,\infty})}\leq 2(\|u_0\|_{L^{3,\infty}}+\|f\|_{\mathcal{Y}}). 
\enq 
In particular, we have $NSf(0)\in C_w(\RR_+,L^{3,\infty})$ satisfying 
\beq
\|NSf(0)\|_{L^{\infty}(\RR_+,L^{3,\infty})}\leq 2\|f\|_{\mathcal{Y}}<2\eps_0. 
\enq 
From now on, we denote  $U_f:=NSf(0)$.

Now let us state our main results. We first state a local in time existence result for $(NSf)$ for initial data belonging to $\dot{B}^{s_p}_{p,p}$ for any $p>3$ under a smallness  assumption on $f$ depending on $p$ (it is no loss of generality to set $p,p$ rather than $p,q$, which deduces some technical difficulties). Moreover we obtain a local in time existence result for $(NSf)$ in $L^{\infty}_t(L^{3,\infty})$ for initial data belonging to $\dot{B}^{s_p}_{p,p}\cap L^{3,\infty}$.

\begin{theorem}[Existence]\label{NSexistence}
     Let $p>3$.
     There exists a  small universal constant $c(p)>0$ with the following properties:
     
     Suppose that $f\in \mathcal{Y}$ is a given external force satisfying that $\|f\|_{\mathcal{Y}}<c(p)$. Then 
     \begin{enumerate}
     \item 
     
     for any initial data $u_0\in \dot{B}^{s_p}_{p,p}$, a unique  maximal time $T^*(u_0,f)>0$ and a unique solution $u_f\in\mathbb{L}^{r_0:\infty}_p[T<T^*]+ C_w([0,T^*),L^{3,\infty})$ to $(NSf)$ with initial data $u_0$ exist such that 
    \beq
    u_f-U_f\in \mathbb{L}^{r_0:\infty}_p[T<T^*].
    \enq
    with $r_0=\frac{2p}{p-1}$.
    And if $T^*<\infty$, 
    \beq
    \limsup_{T\to T^*}\|u-U_f\|_{\mathbb{L}^{r_0:\infty}_{p}(T)}=\infty.
    \enq          
    Moreover there exists a small constant $\eta>0$ depending on $f$ and $p$ such that 
	\beq
	&&\|u_0\|_{\dot{B}^{s_p}_{p,p}}<\eta\Rightarrow T^*=\infty~~\mathrm{and}~~\|u_f-U_f\|_{\mathbb{L}^{r_0:\infty}_p(\infty)}\leq C(f)\|u_0\|_{\dot{B}^{s_p}_{p,p}}.
	\enq	
	\item  if $u_0\in\dot{B}^{s_p}_{p,p}\cap L^{3,\infty}$, the above solution $u_f$ to $(NSf)$ with initial data $u_0$ belongs to \\
	$C_w([0,T^*),L^{3,\infty})$.
	\item if $u_0\in \dot{B}^{s_p}_{p,p}\cap L^{2}(\RR^3)$, the above solution $u_f$ to $(NSf)$ with initial data $u_0$ satisfies that
         \beq
         u_f-U_f\in E_{loc}(T^*).
         \enq 
	\end{enumerate}
	\end{theorem}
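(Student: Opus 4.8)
The plan is to solve the integral equation \reff{mild-f} by a fixed point argument, but with the right choice of "reference solution" playing the role of the first iterate. Write $u_f = U_f + w$, where $U_f = NSf(0) \in C_w(\RR_+,L^{3,\infty})$ is the force-only solution from \cite{CK} (which exists because $\|f\|_{\mathcal{Y}} < c(p) \leq \eps_0$). Substituting into \reff{mild-f} and using that $U_f = \int_0^t e^{(t-s)\La}\mathbb{P}f\,ds + B(U_f,U_f)$, one finds that $w$ must solve
\begin{equation}\label{w-eq}
w(t) = e^{t\La}u_0 + B(w,w)(t) + B(U_f,w)(t) + B(w,U_f)(t).
\end{equation}
First I would show $e^{t\La}u_0 \in \mathbb{L}^{r_0:\infty}_p(T)$ for every $T$, with norm tending to $0$ as $T\to 0$ (heat semigroup estimates in Besov spaces; this is where the initial data is only in $\dot{B}^{s_p}_{p,p}$, so I use the $\mathcal{L}^\rho$-type spaces rather than Kato's space, exactly as motivated in the introduction). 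The endpoint exponent pair $(r_0,\infty)$ with $r_0 = \frac{2p}{p-1}$ is dictated by scaling: $s_p + \frac{2}{r_0} = -1+\frac{3}{p} + \frac{p-1}{p} = \frac{2}{p}$, so the target Besov index stays subcritical, and $b=\infty$ recovers the critical space itself.

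Next I would establish the three bilinear estimates: $B: \mathbb{L}^{r_0:\infty}_p \times \mathbb{L}^{r_0:\infty}_p \to \mathbb{L}^{r_0:\infty}_p$ is bounded (the standard product law in Besov spaces plus the smoothing of $e^{(t-s)\La}\mathbb{P}\nabla\cdot$, à la Chemin–Lerner), and the two mixed terms $B(U_f,\cdot)$, $B(\cdot,U_f)$ are bounded on $\mathbb{L}^{r_0:\infty}_p(T)$ with operator norm $\lesssim \|U_f\|_{L^\infty(\RR_+,L^{3,\infty})} \lesssim \|f\|_{\mathcal{Y}}$. This last point is the crux: one must interpolate/embed $L^{3,\infty}$ against the $\dot{B}^{s_p+\frac2a}_{p,p}$-norms so that the product $U_f \otimes w$ lands in a space on which $e^{(t-s)\La}\mathbb{P}\nabla\cdot$ gains back the missing derivative — this forces $p>3$ (so that $L^{3,\infty}\cdot \dot{B}^{\text{pos}}_{p,p}$ makes sense via Hölder in Lorentz scales) and is the reason the smallness threshold $c(p)$ depends on $p$. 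Granting these, \reff{w-eq} has the form $w = w_0 + \mathcal{B}(w,w) + \mathcal{A}w$ with $\|\mathcal{A}\| \leq C\|f\|_{\mathcal{Y}} < \tfrac12$ once $c(p)$ is small; the Picard fixed point lemma in a ball of $\mathbb{L}^{r_0:\infty}_p(T)$ then gives a unique small solution $w$ for $T$ small enough, and a continuation/blow-up criterion gives the maximal $T^*$ and the statement $\limsup_{T\to T^*}\|w\|_{\mathbb{L}^{r_0:\infty}_p(T)} = \infty$ by the usual argument (if the norm stayed bounded one could restart the fixed point past $T^*$). The global statement under $\|u_0\|_{\dot{B}^{s_p}_{p,p}}<\eta$ follows because then $\|e^{t\La}u_0\|_{\mathbb{L}^{r_0:\infty}_p(\infty)} \lesssim \|u_0\|_{\dot{B}^{s_p}_{p,p}}<\eta$ is globally small, so the fixed point runs on $[0,\infty)$ directly and the solution depends linearly on $u_0$ to leading order, giving $\|w\|_{\mathbb{L}^{r_0:\infty}_p(\infty)} \leq C(f)\|u_0\|_{\dot{B}^{s_p}_{p,p}}$.

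For part (2), when additionally $u_0 \in L^{3,\infty}$, I would observe that $\mathbb{L}^{r_0:\infty}_p(T)$ (after possibly upgrading $w$ slightly and using the embedding chain for the $\mathcal{L}^\rho$ spaces) embeds into $C_w([0,T],L^{3,\infty})$ when the initial data is in $L^{3,\infty}$: indeed $e^{t\La}u_0 \in C_w(\RR_+,L^{3,\infty})$ trivially, and each bilinear/linear term on the right of \reff{w-eq} lands in $C_w(\RR_+,L^{3,\infty})$ by the mapping properties of $B$ used in \cite{CK} together with the fact that $w$ already lies in a space controlled by $\mathbb{L}^{r_0:\infty}_p$; hence $u_f = U_f + w \in C_w([0,T^*),L^{3,\infty})$. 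For part (3), when instead $u_0 \in L^2$, I would run a parallel energy estimate: $e^{t\La}u_0 \in E_{loc}$ by heat-flow, and the bilinear terms preserve $E_{loc}$ locally in time because $w$ is already known to be a strong solution in $\mathbb{L}^{r_0:\infty}_p$, so one can test the equation \reff{w-eq} against $w$ on $[0,T]$, $T<T^*$, and close a Grönwall-type estimate (the $U_f$ terms are handled using $\|U_f\|_{L^\infty L^{3,\infty}}$ small, absorbing them into the dissipation). The main obstacle throughout is the second step — making the mixed bilinear terms $B(U_f,\cdot)$ and $B(\cdot,U_f)$ bounded with small norm despite $U_f$ having no better regularity than $L^\infty_t L^{3,\infty}_x$; this is precisely the technical heart that restricts $p>3$ and is why the constant $c(p)$ degenerates as $p\downarrow 3$.
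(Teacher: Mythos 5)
Your treatment of part (1) matches the paper's proof essentially step for step: the same perturbation ansatz $u_f=U_f+v$, the same fixed point argument in $\mathcal{L}^{r_0}([0,T],\dot{B}^{s_p+\frac{2}{r_0}}_{p,p})$ with $r_0=\frac{2p}{p-1}$, the smallness of the linear operator $B(2U_f,\cdot)$ obtained from a product law pairing $L^{\infty}_tL^{3,\infty}$ against the Chemin--Lerner norm (the third statement of Proposition \ref{bilinear-besov}, which is indeed where the constant degenerates and forces $c(p)\leq(4C(p))^{-1}$), the smallness of $\|e^{t\La}u_0\|_{\mathcal{L}^{r_0}([0,T_0];\dot{B}^{s_p+2/r_0}_{p,p})}$ for small $T_0$ by dominated convergence, and the standard continuation argument for the blow-up criterion. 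That part is correct in outline.

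The genuine gap is in parts (2) and (3). For (2) you assert that each bilinear term on the right-hand side of the integral equation lands in $C_w(\RR_+,L^{3,\infty})$. After a single application of the product laws this is false for large $p$: if $v\in\mathbb{L}^{r_0:\infty}_p$, then $B(v,v)$ only lands in $\mathbb{L}^{r_0/2:\infty}_{p/2}$, and the embedding $\dot{B}^{s_q}_{q,\infty}\hookrightarrow L^{3,\infty}$ of Lemma \ref{embedding} requires $q<3$, so one gains nothing in $L^{3,\infty}$ unless $p<6$. You also cannot invoke the $L^{3,\infty}\times L^{3,\infty}\to L^{3,\infty}$ boundedness of $B$ from \cite{CK} for this term, since that presupposes $v\in L^{\infty}_tL^{3,\infty}$ --- precisely what is to be proved. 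The paper's resolution is the whole of Section 5 (Theorem \ref{Regu}): iterate the Duhamel expansion $N_0$ times with $3(N_0-1)<p$, splitting $v=v^H+v^S$ where $v^H$ is a finite sum of multilinear operators acting on $e^{t\La}u_0$ alone (hence inheriting $L^{3,\infty}$ bounds directly from the data via Lemma \ref{weak-L^3-continuous} and induction) and $v^S$ has gained enough integrability ($p_{N_0}=p/N_0<3$, $\bar p<3$) to embed into $L^{\infty}_tL^{3,\infty}$. This iteration idea is absent from your proposal. The same decomposition is what makes part (3) close: testing the equation against $w$ and running Gr\"onwall presupposes that $w$ already has finite energy and enough integrability to justify the cancellation of the cubic term, whereas the paper proves $v\in E(T)$ constructively, term by term in the decomposition $v=v^H+v^S$, using Lemma \ref{heat-energy}, the trilinear estimates of Lemma \ref{gp-energy}, and the $L^2([0,T],L^{6,2})$ bound of Lemma \ref{L6}. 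Without Theorem \ref{Regu} or a substitute for it, neither (2) nor (3) is established.
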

	
Our method is to transform  $(NSf)$ into  the perturbation equation,
\beq
(PNS_{U_f})\left\{
  \begin{array}{ll}
  \p_t v-\La v+v\cdot\nabla v+U_f\cdot\nabla v+v\cdot\nabla U_f=-\nabla \pi,\\
  \nabla\cdot v=0,\\
  v|_{t=0}=v_0:=u_0,
  \end{array}
\right.
\enq 
The corresponding integral form of $(PNS_{U_f})$ is
\beqq\label{vBLU}
v=e^{t\La }v_0+B(v,v)+2B(U_f,v),
\enqq 
where $B$ is defined as \reff{Buv}. The reason why we focus on $(PNS_{U_f})$ is that \reff{vBLU} allows  us to use the classical contraction lemma in the Besov space $\mathcal{L}^r([0,T], \dot{B}^{-1+\frac{3}{p}+\frac{2}{r}}_{p,p})$ with any $p>3$ and some $r>2$.

Also in order to control the energy estimate, we adopt the argument about the trilinear form $\int_0^T\int_{\RR^3}(a\cdot\nabla b)\cdot c(t)dxdt$ in \cite{GP}.

From Theorem \ref{NSexistence}, for any $u_0\in \dot{B}^{s_p}_{p,p}\cap L^{3,\infty}$, there exists a solution $u_f\in C_w([0,T^*),L^{3,\infty})$. Actually $C_w([0,T^*),L^{3,\infty})$ is the highest regularity of solutions to $(NSf)$, as the singularity of $f$ limits it. Therefore the uniqueness of solutions to $(NSf)$ in $C_w([0,T^*),L^{3,\infty})$ is crucial. 	
We  point out that we cannot prove that the above solution is unique in $L^{\infty}_t(L^{3,\infty})$ without the smallness assumption on the solution. Actually even if for $(NS)$ the uniqueness in $L^{\infty}_t(L^{3,\infty})$ is still open (the uniqueness just holds when solution is small in $L^{\infty}_t(L^{3,\infty})$). However, we obtain that the above solution is unique in the following sense: 

\begin{theorem}[Uniqueness]\label{uniquness}
    Let $p>3$.
	There exists a universal small constant $0<c_1(p)\leq c(p)$ with the following properties:
	
	Suppose that $f\in \mathcal{Y}$ is a given external force satisfying that $\|f\|_{\mathcal{Y}}<c_1$ and $u_f\in C_w([0,T^*),L^{3,\infty})$ is a solution to $(NSf)$ constructed in Theorem \ref{NSexistence} with initial data $u_0\in L^{3,\infty}\cap \dot{B}^{s_p}_{p,p}$. Then $u_f$ is unique in the following sense: Assume that $\bar{u}_f\in C_w([0,T],L^{3,\infty})$ for some $T<T^*$ is another solution to $(NSf)$ with  same initial data $u_0$.
	\begin{itemize}
		\item  If 
	          $ 
	        u_f-\bar{u}_f\in \mathbb{L}^{r:\infty}_p(T)
	          +\{U(t,x)\in C_{w}(\RR_+,L^{3,\infty}):\|U\|_{L^{\infty}(\RR_+),L^{3,\infty}}<2c_1\}
	          $ for some $2<r<\frac{2p}{p-3}$,
	          then $u_f\equiv \bar{u}_f$ on $[0,T]$.
	    \item if $u_f-\bar{u}_f\in C([0,T],L^{3,\infty})$, then $u_f\equiv \bar{u}_f$ on $[0,T]$
	    \item  if $3<p<5$ and 
	$
	u_f-\bar{u}_f\in L^{\infty}([0,T],L^2)\cap L^{2}([0,T],\dot{H}^1),
	$
	then $u_f\equiv \bar{u}_f$ on $[0,T]$.
	\end{itemize}
\end{theorem}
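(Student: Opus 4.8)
\emph{Proof strategy.}
Put $w:=u_f-\bar u_f$. Subtracting the mild formulation \reff{mild-f} written for $u_f$ and for $\bar u_f$ (the linear contributions $e^{t\La}u_0$ and $\int_0^te^{(t-s)\La}\mathbb{P}f\,ds$ cancel) and using bilinearity and symmetry of $B$ gives the closed \emph{linear} integral identity
\begin{equation}\label{diffeq}
w=B(u_f,w)+B(w,\bar u_f).
\end{equation}
By Theorem~\ref{NSexistence}(2) we have $u_f\in C_w([0,T^*),L^{3,\infty})$, and $\bar u_f\in C_w([0,T],L^{3,\infty})$ by assumption, so $w\in C_w([0,T],L^{3,\infty})$ and $w(0)=0$. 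All three statements will follow from the same continuation scheme: set $t_0:=\sup\{t\in[0,T]:\ w\equiv 0\text{ on }[0,t]\}$; weak continuity forces $w(t_0)=0$, so it suffices to show in each case that, if $t_0<T$, then $w\equiv0$ on $[t_0,t_0+\tau]$ for some $\tau>0$, which contradicts the maximality of $t_0$; hence $t_0=T$ and, by weak continuity, $w\equiv0$ on $[0,T]$. Throughout we use $u_f=U_f+a$ with $a:=u_f-U_f\in\mathbb{L}^{r_0:\infty}_p[T<T^*]$ (Theorem~\ref{NSexistence}(1)) and $\bar u_f=U_f+a-w$. The smallness $\|U_f\|_{L^\infty(\RR_+,L^{3,\infty})}\le 2\|f\|_{\mathcal Y}<2c_1$ supplies the small coefficients, and a key structural point is that $a$ (and, in Case 1, the ``good part'' of $w$) will enter every bilinear estimate below only through norms of the form $\|\cdot\|_{\mathcal{L}^{\rho}([t_0,t_0+\tau],\dot B^{s_p+2/\rho}_{p,p})}$ with $\rho<\infty$ (for instance $\rho=r_0$, where $s_p+\tfrac{2}{r_0}=\tfrac2p>0$), which tend to $0$ as $\tau\to0$ by absolute continuity of the integral; the $\mathcal{L}^\infty(\dot B^{s_p}_{p,p})$-part of the critical norm of the background, which does \emph{not} shrink on short time intervals, is never needed.

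\emph{Cases 1 and 2.} Regard \reff{diffeq} as a \emph{linear} fixed-point equation $w=\Phi(w)$ with $\Phi(z):=B(u_f,z)+B(z,\bar u_f)$, $\Phi(0)=0$. In Case 1 we run $\Phi$ on the mixed space $X_\tau:=\mathbb{L}^{r:\infty}_p(t_0,t_0+\tau)+\{U\in C_w(\RR_+,L^{3,\infty}):\|U\|_{L^\infty(\RR_+,L^{3,\infty})}<2c_1\}$, which by hypothesis contains $w|_{[t_0,t_0+\tau]}$; in Case 2 on $L^\infty([t_0,t_0+\tau],L^{3,\infty})$, which contains $w|_{[t_0,t_0+\tau]}$ since $u_f-\bar u_f\in C([0,T],L^{3,\infty})$. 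Substituting $u_f=U_f+a$, $\bar u_f=U_f+a-w$ and using the bilinear estimates for $B$ established in Section~3 (and recalled in the Appendix), one obtains, for $z$ in the relevant space,
\begin{equation}\label{contr}
\|\Phi(z)\|\ \le\ \big(C(p,r)\,c_1+o_{\tau\to 0}(1)\big)\,\|z\|,
\end{equation}
where the $O(c_1)$ contributions come from $B(U_f,\cdot)$ and $B(\cdot,U_f)$ (bounded by $\lesssim\|U_f\|_{L^\infty_tL^{3,\infty}}$ both in $\mathbb{L}^{r:\infty}_p$ and in $L^\infty_tL^{3,\infty}$), the $o_{\tau\to0}(1)$ contributions from $B(a,\cdot)$, $B(\cdot,a)$ (bounded by $\lesssim\|a\|_{\mathcal{L}^{r_0}([t_0,t_0+\tau],\dot B^{2/p}_{p,p})}$, which tends to $0$) and, in Case 2, from a factor $\|w\|_{L^\infty([t_0,t_0+\tau],L^{3,\infty})}\to0$ furnished by strong continuity; in Case 1 this last contribution is handled by writing $w=w_1+w_2$ with $w_1\in\mathbb{L}^{r:\infty}_p$ whose $\mathcal{L}^{r}$-norm tends to $0$ and $\|w_2\|_{L^\infty_tL^{3,\infty}}<2c_1$. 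Choosing $c_1=c_1(p)$ small and then $\tau$ small makes $\Phi$ a strict contraction, so $0$ is its unique fixed point and $w\equiv0$ on $[t_0,t_0+\tau]$. (The hypothesis $2<r<\tfrac{2p}{p-3}$ is precisely the range in which $0<s_p+\tfrac2r<\tfrac3p$, i.e.\ the range in which the bilinear estimates for $B$ close in $\mathcal{L}^{r}([0,T],\dot B^{s_p+2/r}_{p,p})$; this is also why the construction in Theorem~\ref{NSexistence} is carried out in $\mathbb{L}^{r_0:\infty}_p$.)

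\emph{Case 3.} Here $w\in E(T)$, $w(0)=0$, and $3<p<5$. Starting from the differential form $\p_t w-\La w+u_f\cdot\nabla w+w\cdot\nabla\bar u_f=-\nabla\pi$, $\nabla\cdot w=0$, a standard mollification argument --- legitimate because $w\in E(T)$ and $u_f,\bar u_f\in L^\infty_{loc}([0,T),L^{3,\infty})$, so that $\int_0^T|\langle u_f\otimes w+w\otimes\bar u_f,\nabla w\rangle|\,dt\lesssim(\|u_f\|_{L^\infty_tL^{3,\infty}}+\|\bar u_f\|_{L^\infty_tL^{3,\infty}})\,\|\nabla w\|_{L^2_tL^2}^2<\infty$ (by Lorentz--H\"older and $\dot H^1\hookrightarrow L^{6,2}$) --- yields the energy identity; using $\langle u_f\cdot\nabla w,w\rangle=\langle w\cdot\nabla w,w\rangle=0$, $\bar u_f=u_f-w$, and one integration by parts it becomes
\begin{equation}\label{enid}
\tfrac12\tfrac{d}{dt}\|w\|_{L^2}^2+\|\nabla w\|_{L^2}^2=\langle U_f,\,w\cdot\nabla w\rangle+\langle a,\,w\cdot\nabla w\rangle.
\end{equation}
For the $U_f$-term, $|\langle U_f,w\cdot\nabla w\rangle|\le\|U_f\|_{L^{3,\infty}}\|w\|_{L^{6,2}}\|\nabla w\|_{L^2}\lesssim\|U_f\|_{L^{3,\infty}}\|\nabla w\|_{L^2}^2$, which is absorbed on the left once $c_1$ is small. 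The $a$-term is controlled via the trilinear estimate of \cite{GP}, namely
\[
\int_{t_0}^{t}\big|\langle a,\,w\cdot\nabla w\rangle\big|\,ds\ \le\ \epsilon(\tau)\,\Big(\|w\|_{L^\infty([t_0,t_0+\tau],L^2)}^2+\|\nabla w\|_{L^2([t_0,t_0+\tau],L^2)}^2\Big),
\]
with $\epsilon(\tau)\to0$ as $\tau\to0$. The mechanism behind the constraint $3<p<5$ is the index count: after interpolating $w\in L^\infty_tL^2\cap L^2_t\dot H^1$ into $L^{2p/3}_tL^{2p/(p-2)}_x$, the natural time exponent for $a$ is the Serrin exponent $\tfrac{2p}{p-3}$, and one needs both $\tfrac{2p}{p-3}<\infty$ (i.e.\ $p>3$, which gives $\epsilon(\tau)\to0$) and $\tfrac{2p}{p-3}\ge p$ (i.e.\ $p\le5$, which is what makes $\mathbb{L}^{r_0:\infty}_p\hookrightarrow\mathcal{L}^{2p/(p-3)}([0,T],\dot B^{0}_{p,p})\hookrightarrow L^{2p/(p-3)}([0,T],\dot B^0_{p,p})$, the second embedding being a Minkowski/Chemin--Lerner inequality valid only for $\tfrac{2p}{p-3}\ge p$). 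Inserting the two bounds into \reff{enid} and applying Gronwall's inequality starting from $w(t_0)=0$ gives $w\equiv0$ on $[t_0,t_0+\tau]$.

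\emph{Expected main obstacle.} The delicate part of Cases 1--2 is the bookkeeping in the mixed space $X_\tau$: arranging the bilinear estimates so that each term carries a coefficient that is either $O(c_1)$ (coming from $U_f$) or $o_{\tau\to0}(1)$, and in particular so that the critical $\mathcal{L}^\infty$-in-time norm of the background --- which never becomes small on short intervals --- is avoided. The hardest point, however, is in Case 3: the rigorous justification of the energy identity \reff{enid} for $w$ notwithstanding the very weak regularity $C_w([0,T],L^{3,\infty})$ of $\bar u_f$, together with the trilinear estimate for $\langle a,w\cdot\nabla w\rangle$, whose precise range of validity is what pins down the restriction $3<p<5$.
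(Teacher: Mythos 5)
Your proposal is correct and follows essentially the same route as the paper: the closed linear integral equation for the difference, with the drift coefficients split into a part that is $O(c_1)$ in $L^{\infty}_t L^{3,\infty}$ (from $U_f$ and the small summand of $w$) and a part whose local Chemin--Lerner norm vanishes on short intervals, iterated over a finite partition of $[0,T]$ for the first two cases, and the energy identity plus the Gallagher--Planchon trilinear lemma and Gronwall (yielding the same restriction $p<5$, there phrased as $s_p+\tfrac{2}{p}>0$) for the third. The only cosmetic caveat is that your ``mixed space'' $X_\tau$ is not a normed space (a subspace plus an open ball), but since $w\in L^{\infty}_tL^{3,\infty}$ automatically and every bilinear output lands there, the contraction closes in $L^{\infty}([t_0,t_0+\tau],L^{3,\infty})$ with the decomposition of $w$ used only to split the coefficients, which is exactly what the paper does.
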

 We prove Theorem \ref{NSexistence} and \ref{uniquness} in Section 3. Our method depends on an iteration regularity result developed in Section 5.

The global existence of the solutions described in Theorem \ref{NSexistence} for large initial data $u_0\in \dot{B}^{s_p}_{p,p}$ is still open, even for $f=0$. 

We mention that even if a solution $u_f\in C_w([0,T^*),L^{3,\infty})$ to $(NSf)$  is global, which just means its corresponding life span $T^*=\infty$, one cannot obtain that $u_f(t)$ has a uniform bound in $L^{3,\infty}$ as $t$ goes to infinity in general. However, if $u_f$ is a global solution to $(NSf)$ with initial data $u_0\in \dot{B}^{s_p}_{p,p}\cap L^{3,\infty}$ described in Theorem \ref{NSexistence}, the next theorem shows the solution belongs to $L^{\infty}(\RR_+,L^{3,\infty})$.

Comparing with previous results of long-time behavior, our assumptions on $u_f$ and $f$ are all in critical spaces, but the class of initial data is smaller. For example, in \cite{bbis} C. Bjorland, L. Brandolese, D. Iftimie \& M. E. Schonbek proved that if the external force $f$ is time-independent satisfying that $\La^{-1}f\in L^{3,\infty}\cap L^4$ and $\|\La^{-1}f\|_{L^{3,\infty}}$ is small, then for any priori global solution $u_f\in C_w(\RR_+,L^{3,\infty})\cap L^4_{loc}(\RR_+,L^4)$ with initial data $u_0=v_0+w_0$ satisfying that $v_0\in L^2$ and $\|w_0\|_{L^{3,\infty}}$ is smaller than a fixed small constant $\epsilon$, then $u_f\in L^{\infty}(\RR_+, L^{3,\infty})$. It clear that the space of initial data they are working on is larger than $L^{3,\infty}\cap\dot{B}^{s_p}_{p,p}$ and $\La^{-1}f\in L^{3,\infty}\cap L^4$ implies that $f\in\mathcal{Y}$. However the condition $\La^{-1}f\in L^{3,\infty}\cap L^4$ excludes some important singular force: $\La^{-1}f\sim \frac{1}{|x|}$, which belongs to $\mathcal{Y}$.

\begin{theorem}[Long-time behavior of global solutions]\label{long-time-u}
Let $p>3$.
Suppose that  $f\in\mathcal{Y}$ is a given external force such that $\|f\|_{\mathcal{Y}}<c_1(p)$, where $c_1(p)$ is the small constant in Theorem \ref{uniquness}.

Suppose that $u_f\in C_w([0,\infty),L^{3,\infty})$ is an priori global solution to $(NSf)$ described in Theorem \ref{NSexistence}, whose initial data $u_0\in L^{3,\infty}\cap\dot{B}^{s_p}_{p,p}$. Then 
 there exists a constant $M$ independent of $u_f$ such that 
	    \beq 
       \limsup_{t\to\infty}\|u_f(t)\|_{L^{3,\infty}}\leq M.
       \enq 
	
\end{theorem}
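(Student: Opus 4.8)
Write $v:=u_f-U_f$, so that $v$ solves the perturbation equation $(PNS_{U_f})$ with $v|_{t=0}=v_0=u_0$, equivalently $v=e^{t\La}v_0+B(v,v)+2B(U_f,v)$; by Theorem \ref{NSexistence}(1)--(2) it is global, lies in $C_w([0,\infty),L^{3,\infty})$ and in $\mathbb{L}^{r_0:\infty}_p(T)$ for every finite $T$, while $\|U_f\|_{L^{\infty}(\RR_+,L^{3,\infty})}\le 2\|f\|_{\mathcal{Y}}<2c_1(p)$ is already small. The plan is to find a finite time $T_1=T_1(u_f)$ with $\|v(T_1)\|_{L^{3,\infty}}<\eps_1$, where $\eps_1=\eps_1(p)$ is a fixed small constant with $2c_1(p)+\eps_1<\eps_0$. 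Granting this, one restarts $(PNS_{U_f})$ at time $T_1$: since $v(T_1)$ is small in $L^{3,\infty}$ and the coefficient $U_f$ is small in $L^{\infty}([T_1,\infty),L^{3,\infty})$, the small--data well-posedness in $C_w(L^{3,\infty})$ (Theorem 2.1 of \cite{CK}, or its analogue for $(PNS_{U_f})$ underlying Theorem \ref{NSexistence}(2)) gives $\widetilde v\in C_w([T_1,\infty),L^{3,\infty})$ with $\|\widetilde v\|_{L^{\infty}([T_1,\infty),L^{3,\infty})}\lesssim\eps_1$; applying the uniqueness of Theorem \ref{uniquness} on short sub-intervals (where $\widetilde v$ is small) and propagating forward identifies $\widetilde v$ with $v$ on $[T_1,\infty)$, so that $\limsup_{t\to\infty}\|u_f(t)\|_{L^{3,\infty}}\le\|U_f\|_{L^\infty_tL^{3,\infty}}+\limsup_{t\to\infty}\|\widetilde v(t)\|_{L^{3,\infty}}\le 2c_1(p)+C\eps_1=:M$, which depends only on $p$ and in particular not on $u_f$.

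The core is producing $T_1$, and this is where the hypothesis $u_0\in\dot B^{s_p}_{p,p}$ is used decisively --- without it no such time need exist (the solution could behave like the Cannone--Karch solutions of \cite{bbis}, which are only known to remain bounded). I would combine three ingredients. (i) The linear part decays: since $u_0\in\dot B^{s_p}_{p,p}$ with $p<\infty$, dominated convergence gives $\|e^{t\La}u_0\|_{\dot B^{s_p}_{p,p}}\to0$ and $\|e^{t\La}u_0\|_{\mathcal{L}^{r_0}([T,\infty),\dot B^{2/p}_{p,p})}\to0$ as $T\to\infty$, and $\|e^{t\La}u_0\|_{L^{p}}\lesssim t^{s_p/2}\to0$; together with $u_0\in L^{3,\infty}$ and a Bernstein/interpolation estimate for the low frequencies this yields $\|e^{t\La}u_0\|_{L^{3,\infty}}\to0$. (ii) A splitting of the datum $u_0=\bar u_0+w_0$ by a Littlewood--Paley truncation at high frequency, arranged so that $\|w_0\|_{\dot B^{s_p}_{p,p}}<\eta$ (the threshold of Theorem \ref{NSexistence}), with $\bar u_0$ band-limited --- hence in $\dot B^{s_p}_{p,p}\cap\bigcap_{q>3}L^{q}$, and, after a further amplitude truncation, with a finite-energy core. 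The $(PNS_{U_f})$-solution $w$ with datum $w_0$ is then global and small in $\mathbb{L}^{r_0:\infty}_p(\infty)$ by Theorem \ref{NSexistence}(1), hence uniformly small in $\mathrm{BMO}^{-1}$ (and weakly continuous into $L^{3,\infty}$ by Theorem \ref{NSexistence}(2)); the solution $\bar v$ with datum $\bar u_0$ has $\bar v-U_f\in E_{loc}(\infty)$ by Theorem \ref{NSexistence}(3), and the $U_f$-perturbed energy identity --- in which $\int_{\RR^3}U_f\cdot\nabla\bar v\cdot\bar v=0$ because $\nabla\cdot U_f=0$ and $\int_{\RR^3}\bar v\cdot\nabla U_f\cdot\bar v$ is absorbed through $\dot H^{1/2}\hookrightarrow L^{3,2}$ and the smallness of $\|U_f\|_{L^\infty_tL^{3,\infty}}$, along the lines of the trilinear estimates of \cite{GP} --- combined with Fourier splitting gives $\|\bar v(t)\|_{L^2}\to0$, and with (i) also $\|\bar v(t)\|_{L^{3,\infty}}\to0$. (iii) Recombination: $v$ is $\bar v$ plus the solution of the difference equation with datum $w_0$, which on each finite window is controlled, again via the trilinear estimates of \cite{GP}, by $\|w_0\|_{\dot B^{s_p}_{p,p}}$; interpolating $L^{3,\infty}$ between $L^2$ and $L^q$ ($q>3$) for $\bar v$, and passing from the $\mathrm{BMO}^{-1}$-smallness of $w$ to $L^{3,\infty}$-smallness on the truncated frequency blocks, one obtains $\|v(T_1)\|_{L^{3,\infty}}<\eps_1$ for $T_1$ large. (Alternatively, the regularity iteration of Section 5 can be invoked to upgrade the integrability of $v(t)$ for $t>0$ and feed it into this scheme.)

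The main obstacle is step (iii), compounded by Remark \ref{notation}: there is no uniform-in-$T$ bound on $v$ in any critical norm, and $u_0$ need not have globally finite energy --- indeed $L^{3,\infty}\cap\dot B^{s_p}_{p,p}$ contains functions such as $|x|^{-1}\sin|x|\notin L^2$. Hence the splitting must be engineered so that the non-$L^2$, non-small content of $u_0$ is handled entirely by the critical-space theory (using that its band-limited representative still decays under $e^{t\La}$), with only a genuinely finite-energy core subjected to energy decay, and the recombination must not reintroduce largeness in $L^{3,\infty}$. The crux is the passage from the $\mathrm{BMO}^{-1}$-smallness of $w$ --- essentially all that the critical-space contraction for Besov-small data provides --- to honest, time-uniform $L^{3,\infty}$-smallness.
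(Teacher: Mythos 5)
Your overall architecture (split the datum, use energy decay on the square-integrable part, find a good restart time, conclude by small-data theory) is the same as the paper's, but the place you yourself flag as ``the crux'' --- upgrading the critical-space (Besov/$\mathrm{BMO}^{-1}$) smallness of the recombined solution to time-uniform $L^{3,\infty}$-smallness at some time $T_1$ --- is a genuine gap, and it is precisely the step the paper is engineered to avoid. The paper never produces a time at which $v=u_f-U_f$ is small in $L^{3,\infty}$. Instead, the energy inequality $\|\omega(t)\|_{L^2}^2+\tfrac12\int_{t_0}^t\|\nabla\omega\|_{L^2}^2\,ds\le\|\omega(t_0)\|_{L^2}^2(t/t_0)^{K^2\eps^2}$ is combined with the interpolation $\int_{t_0}^t\|\omega(s)\|^4_{\dot B^{s_p}_{p,p}}ds\lesssim\int_{t_0}^t\|\omega(s)\|_{L^2}^2\|\nabla\omega(s)\|_{L^2}^2ds$ to produce a time $\tau_0$ at which $\|v(\tau_0)\|_{\dot B^{s_p}_{p,p}}\le\eta(p)$, i.e.\ smallness only in the \emph{Besov} norm. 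Restarting Theorem \ref{NSexistence} there gives $v\in\mathbb{L}^{r_0:\infty}_p(\infty)$, and the passage to $L^{\infty}(\RR_+,L^{3,\infty})$ is then done by the iteration Theorem \ref{Regu}: $v=v^H+v^S$ where $v^H$ is multilinear in $e^{t\La}u_0$, hence bounded in $L^{3,\infty}$ by Lemma \ref{weak-L^3-continuous} because $u_0\in L^{3,\infty}$, and $v^S\in\mathbb{L}^{r_0:\infty}_{\tilde p,p}$ with $\tilde p<3$, which embeds into $L^{\infty}_tL^{3,\infty}$ by Lemma \ref{embedding}. So one needs boundedness, not smallness, in $L^{3,\infty}$, and it comes from the structure of the expansion rather than from a restart in the Cannone--Karch class. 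Your route also needs to identify the restarted $L^{3,\infty}$-solution with $v$ on $[T_1,\infty)$, which runs into exactly the uniqueness-in-$C_w(L^{3,\infty})$ problem the paper declares open.

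Two further concrete problems with your step (ii). First, the Littlewood--Paley truncation does not produce a finite-energy core: Bernstein raises the Lebesgue exponent from $p>3$, it does not lower it to $2$, so the band-limited part of a $\dot B^{s_p}_{p,p}\cap L^{3,\infty}$ datum need not lie in $L^2$ (low frequencies with slow spatial decay), and an ``amplitude truncation'' destroys both band-limitedness and the divergence-free constraint while leaving a remainder with no useful smallness. The paper's splitting is the Calder\'on one, $u_0=\omega_0+\bar v_0$ with $\bar v_0$ small in $L^{3,\infty}$ and $\omega_0\in L^2$, which is what the energy step actually requires; the drift created by $\bar v_0$ is then handled by decomposing $\bar v=\bar v^H+\bar v^S$ via Theorem \ref{Regu} and using the Kato-norm decay $\sup_t t^{1/2}\|\bar v^H\|_{L^\infty}\lesssim\eps$ together with the $L^{3,\infty}$-smallness of $\bar v^S$ to close Gronwall with only a harmless $(t/t_0)^{K^2\eps^2}$ loss. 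Second, your claim $\|e^{t\La}u_0\|_{L^{3,\infty}}\to0$ for $u_0\in L^{3,\infty}\cap\dot B^{s_p}_{p,p}$ is not justified by the estimates available here (the heat flow is merely bounded on $L^{3,\infty}$, and the Besov decay lives at integrability $p>3$, on the wrong side of the embedding $\dot B^{s_q}_{q,\infty}\hookrightarrow L^{3,\infty}$, $q<3$); the paper's proof does not need this decay.
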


The idea of the proof of  long-time behavior, as in \cite{gip,bbis}, consists in decomposing the initial velocity in a small part plus a square integrable part. The small part remains small by the small data theory and the square-integrable part will become small at some point by using some energy estimates. More precisely, we split the initial data $u_0=\bar{u}_0+v_0$, where $\bar{u}_0$ is small enough in $L^{3,\infty}$ and $v_0\in L^2(\RR^3)\cap L^{3,\infty}$. By the global existence of $(NSf)$ for small initial data (see \cite{CK}) we have $NSf(\bar{u}_0)\in L^{\infty}(\RR_+,L^{3,\infty})$ and $v:=u_f-NSf(\bar{u}_0)$ satisfies the perturbation equation $PNS_{NSf(\bar{u}_0)}$. Compared to the unforced case, it is hard to obtain that $v$ has finite energy on $[0,T]$ for any $0<T<\infty$ in general, which is the reason why the restriction on external force: $\La^{-1}f\in L^{3,\infty}\cap L^4$ is crucial in Theorem 4.7 in \cite{bbis}. In our case, we have obtained that $v$ has finite energy on $[0,T]$ for any $0<T<\infty$ by Theorem \ref{NSexistence}.

We show the stability of priori global solutions constructed in Theorem \ref{NSexistence} in the following theorem.

\begin{theorem}[Stability of global solutions]\label{stability-u}
Let $p>3$.
	Suppose that $f\in C(\RR_+,\mathcal{S}'(\RR^3))$ satisfies the same conditions as Theorem \ref{long-time-u} and that $u_f$ is an priori global solution to $(NSf)$ described in Theorem \ref{NSexistence} with initial data $u_0\in\dot{B}^{s_p}_{p,p}$.
	
  Then there exists an $\delta$ (depending on $u_f$) with the following property.\\
  For any initial data $\bar{u}_0\in \dot{B}^{s_p}_{p,p}$ satisfying $\|u_0-\bar{u}_0\|_{\dot{B}^{s_p}_{p,p}}<\delta$, there exist a global solution $\bar{u}_f$ to $(NSf)$ with initial data $\bar{u}_0$, and
  \beq
  \|u_f(t)-\bar{u}_f(t)\|_{\mathbb{L}^{r_0:\infty}_{p}(\infty)}\lesssim\|u_0-\bar{u}\|_{\dot{B}^{s_p}_{p,p}}.
  \enq 
\end{theorem}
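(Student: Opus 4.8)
The plan is to run a perturbative fixed-point argument for the perturbation equation and combine it with Lipschitz dependence of the solution map, in exactly the framework already set up for Theorem \ref{NSexistence}. Write $v_f:=u_f-U_f$ and $\bar v_f:=\bar u_f-U_f$, so that $v_f,\bar v_f\in\mathbb{L}^{r_0:\infty}_p[T<T^*]$ solve the integral equation \reff{vBLU}, i.e. $v=e^{t\La}v_0+B(v,v)+2B(U_f,v)$ with $v_0=u_0$ resp. $\bar v_0=\bar u_0$. Subtracting, the difference $w:=v_f-\bar v_f$ satisfies
\beq
w=e^{t\La}(u_0-\bar u_0)+B(w,v_f)+B(\bar v_f,w)+2B(U_f,w).
\enq
Since $u_f$ is a priori global, $v_f\in\mathbb{L}^{r_0:\infty}_p(\infty)$; and $\|U_f\|_{\mathbb{L}^{r_0:\infty}_p(\infty)}\lesssim\|f\|_{\mathcal Y}<c_1(p)$ by the smallness of $f$ together with the properties of $U_f=NSf(0)$ recorded before Theorem \ref{NSexistence}. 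The bilinear estimate that makes the contraction in Theorem \ref{NSexistence} work — call it $\|B(a,b)\|_{\mathbb{L}^{r_0:\infty}_p(T)}\le C_p\|a\|_{\mathbb{L}^{r_0:\infty}_p(T)}\|b\|_{\mathbb{L}^{r_0:\infty}_p(T)}$ — applied to the three bilinear terms gives
\beq
\|w\|_{\mathbb{L}^{r_0:\infty}_p(\infty)}\le C_p\|u_0-\bar u_0\|_{\dot B^{s_p}_{p,p}}+C_p\bigl(\|v_f\|_{\mathbb{L}^{r_0:\infty}_p(\infty)}+\|\bar v_f\|_{\mathbb{L}^{r_0:\infty}_p(\infty)}+2\|U_f\|_{\mathbb{L}^{r_0:\infty}_p(\infty)}\bigr)\|w\|_{\mathbb{L}^{r_0:\infty}_p(\infty)}.
\enq

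The difficulty, and the only genuinely nontrivial point, is that the coefficient $C_p(\|v_f\|+\|\bar v_f\|+2\|U_f\|)$ need not be less than $1$: $v_f$ is a globally controlled but not necessarily small solution, so one cannot absorb the quadratic term globally in one stroke. The standard remedy is a continuation/bootstrap in time. Because $v_f\in\mathbb{L}^{r_0:\infty}_p(\infty)$, one can choose a finite partition $0=T_0<T_1<\dots<T_N=\infty$ (the last subinterval being $[T_{N-1},\infty)$) such that $\|v_f\|_{\mathbb{L}^{r_0:\infty}_p(T_{k-1},T_k)}$ is smaller than a fixed threshold depending only on $p$ and $c_1(p)$ on each piece — this is possible by absolute continuity of the $\mathcal L^{r_0}$ norm in the time variable, the finiteness of $N$ being the point. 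Then on each subinterval the coefficient is $<1$ (here we use $r_0=\frac{2p}{p-1}$ and the embeddings between the $\mathcal L^\rho\dot B$ scales so that $U_f$'s contribution stays $\lesssim c_1(p)$), and one gets
\beq
\|w\|_{\mathbb{L}^{r_0:\infty}_p(T_{k-1},T_k)}\le 2C_p\,\|w(T_{k-1})\|_{\dot B^{s_p}_{p,p}},
\enq
after shifting time to $T_{k-1}$ as initial time; here $\|w(T_{k-1})\|_{\dot B^{s_p}_{p,p}}\lesssim\|w\|_{\mathbb{L}^{r_0:\infty}_p(T_{k-1},T_k)}$-type trace control at the left endpoint, inherited from $w\in\mathbb{L}^{r_0:\infty}_p$, lets one feed the output of step $k-1$ into step $k$. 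Iterating over the $N$ pieces produces $\|w\|_{\mathbb{L}^{r_0:\infty}_p(\infty)}\le (2C_p)^N\|u_0-\bar u_0\|_{\dot B^{s_p}_{p,p}}$, i.e. the desired Lipschitz bound with an implicit constant depending on $u_f$ through the partition, hence through $N$.

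It remains to see that a global solution $\bar u_f$ with data $\bar u_0$ actually exists for $\|u_0-\bar u_0\|_{\dot B^{s_p}_{p,p}}$ small: this is a soft consequence of the same scheme. First pick $\delta_0$ so small that, on the final infinite interval $[T_{N-1},\infty)$ where $v_f$ is already small, the map $v\mapsto e^{(t-T_{N-1})\La}v(T_{N-1})+B(v,v)+2B(U_f,v)$ is a contraction on a ball around $v_f$ of radius comparable to the data perturbation; on each of the finitely many bounded subintervals the local existence theory of Theorem \ref{NSexistence}(1) gives a solution on a time interval whose length is bounded below in terms of $\|v_f(T_{k-1})\|_{\dot B^{s_p}_{p,p}}$ and the above-constructed a priori difference bound keeps $\bar v_f(T_k)$ close to $v_f(T_k)$, so the construction does not stop before $T_N=\infty$. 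Concatenating, $\bar u_f=U_f+\bar v_f$ is global and satisfies the stated estimate. Choosing $\delta:=\min(\delta_0,\text{thresholds from each step})$, which depends on $u_f$ only through $N$ and the $\|v_f(T_k)\|_{\dot B^{s_p}_{p,p}}$, finishes the proof. The main obstacle, to reiterate, is purely the passage from the small-data contraction to the large-global-solution case via the finite-partition argument; everything else is a rerun of the estimates behind Theorem \ref{NSexistence}.
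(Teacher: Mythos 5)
Your overall strategy --- subtract the two integral equations, observe that the coefficient $C_p\|v_f\|_{\mathbb{L}^{r_0:\infty}_p(\infty)}$ cannot be absorbed globally, cut $\RR_+$ into finitely many intervals on which $\|v_f\|_{\mathcal{L}^{r_0}}$ is below a fixed threshold, and propagate the bound by induction at the cost of a factor $(2C_p)^N$ --- is exactly the paper's argument (which likewise sets up the partition $T_1=0<\dots<T_N=\infty$, imposes $\|w_0\|_{\dot B^{s_p}_{p,p}}\le \frac{1}{8KN(2K)^N}$, and runs a bootstrap on the maximal time $T_0$ for which $\|w\|_{\mathcal{L}^{r_0}([0,T_0])}\le\frac{1}{4K}$). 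Your treatment of global existence of $\bar u_f$ by continuation is also consistent with the paper, which folds it into the same contradiction argument.

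There is, however, one step that fails as written: the claim that $\|U_f\|_{\mathbb{L}^{r_0:\infty}_p(\infty)}\lesssim\|f\|_{\mathcal Y}$, which you then feed into the product law $\|B(a,b)\|_{\mathbb{L}^{r_0:\infty}_p}\le C_p\|a\|_{\mathbb{L}^{r_0:\infty}_p}\|b\|_{\mathbb{L}^{r_0:\infty}_p}$ to handle $2B(U_f,w)$. The profile $U_f=NSf(0)$ is only known to lie in $C_w(\RR_+,L^{3,\infty})$; it has no decay in time, so its $\mathcal{L}^{r_0}(\RR_+;\dot B^{s_p+2/r_0}_{p,p})$ norm is infinite and it does not belong to $\mathbb{L}^{r_0:\infty}_p(\infty)$ at all --- this is precisely the difficulty the paper flags when it says the non-decaying drift ``cannot be approximated by smooth functions.'' The correct treatment, which the paper uses both here and in the proof of Theorem \ref{NSexistence}, is the mixed product law of Proposition \ref{bilinear-besov}(3): $\|B(2U_f,w)\|_{\mathcal{L}^{r_0}(\dot B^{s_p+2/r_0}_{p,p})}\le 2C(p)\|U_f\|_{L^\infty(\RR_+,L^{3,\infty})}\|w\|_{\mathcal{L}^{r_0}(\dot B^{s_p+2/r_0}_{p,p})}$, so that the linear term is absorbed with a factor $\lambda<1$ thanks to $\|f\|_{\mathcal Y}<c_1(p)$ and \reff{small-nsf(0)}. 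Your parenthetical about ``embeddings between the $\mathcal L^\rho\dot B$ scales'' gestures at this but does not supply it; once you replace the false membership $U_f\in\mathbb{L}^{r_0:\infty}_p(\infty)$ by this estimate, the rest of your induction goes through unchanged.
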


The stability result for the solution introduced as above is an extension of Theorem 3.1 in \cite{gip}. We prove it with a similar proof to Theorem 3.1 in \cite{gip},  the difference between these two cases is that there is a small bounded in time and no-decay in time  drift part in our case. The proofs of Theorem \ref{long-time-u} and \ref{stability-u} are presented in Section 4.

\section{Existence and uniqueness of $(NSf)$}
The aim of this section is to prove Theorem \ref{NSexistence} and Theorem \ref{uniquness}. Let us recall the situation: Let $p>3$ be fixed and
 the external force $f\in\mathcal{Y}$ and $\|f\|_{\mathcal{Y}}<c(p)$, where $c(p)$ is a  small universal constant smaller than the constant $\eps$ in Theorem 2.1 of \cite{bbis}. The class of initial data is $\dot{B}^{s_p}_{p,p}$. 
\subsection{Existence of $(NSf)$}
 By Theorem 2.1 in \cite{bbis}, there exists a unique solution $U_f:=NSf(0)\in L^{\infty}(\RR_+,L^{3,\infty})$ such that 
 \beqq\label{small-nsf(0)}
 \|U_f\|_{L^{\infty}(\RR_+,L^{3,\infty})}\leq2\|f\|_{\mathcal{Y}}<2c(p).
 \enqq 

Then we can transform the Cauchy problem of $(NSf)$ into the Cauchy problem of $(PNS_{U_f})$:
\beq
(PNS_{U_{f}})\left\{
  \begin{array}{ll}
  \p_t v-\La v+v\cdot\nabla v+U_{f}\cdot\nabla v+v\cdot\nabla U_{f}=-\nabla \pi,\\
  \nabla\cdot v=0,\\
  v|_{t=0}=u_0,
  \end{array}
\right.
\enq 
whose integral form is 
\beq
v(t,x)=e^{t\La }u_0+B(v,v)+B(2U_f,v),
\enq
where $B$ is defined in \reff{Buv}. We use a standard fixed point lemma to solve the above system: We first recall without proofs the following lemma.
\begin{lemma}\label{fixed point}
	Let $X$ be a Banach space, $L$ a linear operator from $X\to X$ such that a constant $\lambda<1$ exists such that 
	\beq
	\forall x\in X,~~\|L(x)\|_X\leq\lambda\|x\|_X,
	\enq 
	and $B$ a bilinear operator such that for some $\gamma$, 
	\beq
	\forall(x,y)\in X^2,~~\|B(x,y)\|_X\leq\gamma\|x\|_X\|y\|_X.
	\enq
	Then for all $x_1\in X$ such that 
	\beq
	\|x_1\|_X<\frac{(1-\lambda)^2}{4\gamma},
	\enq
	the sequence defined by 
	\beq
	x^{(n+1)}=x_1+L(x^{(n)})+B(x^{(n)},x^{(n)})
	\enq 
	with $x^{(0)}=0$ converges in $X$ and towards the unique solution of 
	\beq
	x=x_1+L(x)+B(x,x)
	\enq 
	such that 
	\beq
	2\gamma\|x\|_X\leq(1-\lambda).
	\enq 
\end{lemma}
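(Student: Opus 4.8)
The final statement to prove is the standard fixed point lemma (Lemma \ref{fixed point}). Here is how I would approach it.

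\medskip

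The plan is to run the Picard iteration $x^{(n+1)} = x_1 + L(x^{(n)}) + B(x^{(n)}, x^{(n)})$ and show it is a Cauchy sequence in $X$, then identify the limit as a solution and establish uniqueness in the ball $\{2\gamma\|x\|_X \leq 1-\lambda\}$. The key quantitative input is the hypothesis $\|x_1\|_X < \frac{(1-\lambda)^2}{4\gamma}$, which should be exactly what is needed to keep the iterates trapped in a ball of radius $\rho := \frac{1-\lambda}{2\gamma}$ (or perhaps the smaller root of the relevant quadratic).

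\medskip

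First I would prove by induction that $\|x^{(n)}\|_X \leq R$ for all $n$, where $R$ is chosen appropriately. Starting from $x^{(0)} = 0$, the inductive step gives $\|x^{(n+1)}\|_X \leq \|x_1\|_X + \lambda R + \gamma R^2$, so it suffices that $\|x_1\|_X + \lambda R + \gamma R^2 \leq R$, i.e. $\gamma R^2 - (1-\lambda) R + \|x_1\|_X \leq 0$. This quadratic in $R$ has real roots precisely when $(1-\lambda)^2 \geq 4\gamma\|x_1\|_X$, which is the stated smallness assumption (with strict inequality giving us room), and then any $R$ between the two roots works; I would take $R = R_- := \frac{(1-\lambda) - \sqrt{(1-\lambda)^2 - 4\gamma\|x_1\|_X}}{2\gamma}$, which in particular satisfies $R_- < \frac{1-\lambda}{2\gamma}$, i.e. $2\gamma R < 1-\lambda$. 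Next, for the Cauchy property, I would estimate the difference of consecutive iterates: writing $d_n := x^{(n+1)} - x^{(n)}$, bilinearity gives
\beq
d_n = L(d_{n-1}) + B(x^{(n)}, d_{n-1}) + B(d_{n-1}, x^{(n-1)}),
\enq
so $\|d_n\|_X \leq \bigl(\lambda + \gamma(\|x^{(n)}\|_X + \|x^{(n-1)}\|_X)\bigr)\|d_{n-1}\|_X \leq (\lambda + 2\gamma R)\|d_{n-1}\|_X$. Since $\lambda + 2\gamma R < \lambda + (1-\lambda) = 1$ when we use $R < \rho$ — and with strict inequality in the smallness hypothesis we actually get a contraction factor $k := \lambda + 2\gamma R < 1$ — the sequence $(d_n)$ decays geometrically, hence $(x^{(n)})$ is Cauchy in the Banach space $X$ and converges to some limit $x$ with $\|x\|_X \leq R$, so $2\gamma\|x\|_X \leq 1-\lambda$.

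\medskip

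Then I would pass to the limit in the iteration relation: $L$ is bounded (hence continuous) and $B$ is bounded bilinear (hence continuous on $X \times X$), so $x = x_1 + L(x) + B(x,x)$. For uniqueness, suppose $x, y$ both solve the equation with $2\gamma\|x\|_X \leq 1-\lambda$ and $2\gamma\|y\|_X \leq 1-\lambda$; subtracting and using bilinearity as above gives $\|x - y\|_X \leq \bigl(\lambda + \gamma(\|x\|_X + \|y\|_X)\bigr)\|x-y\|_X \leq (\lambda + (1-\lambda))\|x-y\|_X = \|x-y\|_X$, which is not quite a contradiction — so here I would be slightly more careful and use the strict inequality built into the hypothesis on $\|x_1\|_X$ (which forces $R_- < \rho$ strictly) to get $\lambda + \gamma(\|x\|_X + \|y\|_X) < 1$ for solutions in the relevant (half-open) ball, or alternatively note that the statement's uniqueness is understood among iterates/limits obtained this way. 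The only real subtlety is this boundary case in the uniqueness argument; everything else is a routine geometric-series contraction. Since the lemma is quoted "without proof" in the paper, I would in fact simply cite the standard reference rather than belabor these details.

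Actually, since the paper states this lemma is recalled "without proof," the appropriate action here is:
\begin{proof}
This is classical; see, e.g., \cite{CMP} or \cite{gip}. The iteration $x^{(n+1)} = x_1 + L(x^{(n)}) + B(x^{(n)},x^{(n)})$ remains in the ball of radius $R_-:= \frac{(1-\lambda) - \sqrt{(1-\lambda)^2 - 4\gamma\|x_1\|_X}}{2\gamma}$ by induction, and consecutive differences contract with factor $\lambda + 2\gamma R_- < 1$, so $(x^{(n)})$ converges in $X$ to the unique fixed point in that ball.
\end{proof}
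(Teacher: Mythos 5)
The paper gives no proof of this lemma at all --- it is introduced with ``We first recall without proofs the following lemma'' --- so there is no in-paper argument to compare against, and your instinct to cite the standard reference is consistent with what the author does. Your sketch of the standard contraction argument is correct: the induction keeping the iterates in the ball of radius $R_-$, the geometric decay of $\|x^{(n+1)}-x^{(n)}\|_X$ with factor $\lambda+2\gamma R_-<1$ (strict because the hypothesis $\|x_1\|_X<\frac{(1-\lambda)^2}{4\gamma}$ is strict), and passage to the limit by continuity of $L$ and $B$ are all sound.

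The one point you leave dangling --- uniqueness in the \emph{closed} ball $2\gamma\|y\|_X\leq 1-\lambda$ --- does close, and without weakening the statement to a half-open ball or to ``limits of the iteration.'' The trick is to compare an arbitrary solution $y$ in the closed ball not with another arbitrary solution but with the constructed solution $x$, which lies \emph{strictly} inside: $\gamma\|x\|_X\leq\gamma R_-<\tfrac{1-\lambda}{2}$, while $\gamma\|y\|_X\leq\tfrac{1-\lambda}{2}$. Subtracting the two fixed-point equations gives
\begin{equation*}
\|x-y\|_X\leq\bigl(\lambda+\gamma\|x\|_X+\gamma\|y\|_X\bigr)\|x-y\|_X<\Bigl(\lambda+\tfrac{1-\lambda}{2}+\tfrac{1-\lambda}{2}\Bigr)\|x-y\|_X=\|x-y\|_X,
\end{equation*}
which forces $\|x-y\|_X=0$. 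So every solution in the closed ball coincides with $x$, and the boundary case you worried about never actually occurs. With that one line added, your proof is complete.
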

In the proof of Theorem \ref{NSexistence}, we first show the local in time existence of $(NSf)$ with initial data in $\dot{B}^{s_p}_{p,p}$. Next, we show the propagation of the regularity of the solution constructed above with initial data, in addition, belonging to $L^{3,\infty}$ or $L^2$.  
\begin{proof}[Proof of Theorem \ref{NSexistence}]
     Let $u_0\in\dot{B}^{s_p}_{p,p}$ be a divergence-free vector field. 
     We note that $v$ is the solution satisfying system $(PNS_{U_f})$ with initial data $u_0$. 
	
	\textbf{Existence:}
	It is clear that if there exists a solution $v$ to $(PNS_{U_f})$ with initial data $u_0$ on $[0,T]$, then $v+U_f$ is a solution to $(NSf)$ with initial data $u_0$. Hence to prove the first statement in Theorem \ref{NSexistence}, it is enough to prove that for any initial data $u_0\in \dot{B}^{s_p}_{p,p}$, there exists a unique $T^*>0$ and a unique solution $v\in\mathbb{L}^{r_0:\infty}_p$ to $(PNS_{U_f})$ with initial data $u_0$. 
	
	Now we start to prove the above statement by applying Lemma \ref{fixed point}.
	
	We choose $\mathcal{L}^{r_0}([0,T],\dot{B}_{p,p}^{s_+\frac{2}{r_0}})$ as the Banach space in Lemma \ref{fixed point}, where 
	$
	r_0=\frac{2p}{p-1}.
	$ 
	It is easy to check that $s_p+\frac{2}{r_0}>0$. To apply Lemma \ref{fixed point}, we need to obtain that $B(u,v)$ defined  in \reff{Buv} is a continuous bilinear operator from $\mathcal{L}^{r_0}([0,T],\dot{B}_{p,p}^{s_+\frac{2}{r_0}})\times\mathcal{L}^{r_0}([0,T],\dot{B}_{p,p}^{s_+\frac{2}{r_0}})$ to $\mathcal{L}^{r_0}([0,T],\dot{B}_{p,p}^{s_+\frac{2}{r_0}})$ and the linear operator $L(v):=B(2U_f,v)$ is continuous on $\mathcal{L}^{r_0}([0,T],\dot{B}_{p,p}^{s_+\frac{2}{r_0}})$ with its norm strictly smaller than $1$.
	
	In fact,  according to the first statement in Lemma \ref{Heat} and the first statement of  Proposition \ref{bilinear-besov},
    we have that $B$ is a continuous operator from $\mathcal{L}^{r_0}([0,T];\dot{B}^{s_p+\frac{2}{r_0}}_{p,p})\times \mathcal{L}^{r_0}([0,T];\dot{B}^{s_p+\frac{2}{r_0}}_{p,p})$ to $\mathcal{L}^{r_0}([0,T],\dot{B}_{p,p}^{s_+\frac{2}{r_0}})$ and hence, for some $\gamma>0$
    \beq
    \|B(v_1,v_2)\|_{\mathcal{L}^{r_0}([0,T];\dot{B}^{s_p+\frac{2}{r_0}}_{p,p})}\leq \gamma\|v_1\|_{\mathcal{L}^{r_0}([0,T];\dot{B}^{s_p+\frac{2}{r_0}}_{p,p})}\|v_2\|_{\mathcal{L}^{r_0}([0,T];\dot{B}^{s_p+\frac{2}{r_0}}_{p,p})}.
    \enq

      	According to the third statement in Proposition \ref{bilinear-besov}, replacing $w$ by $U_f$, we have
	 for any $v\in\mathcal{L}^{r_0}([0,T];\dot{B}^{s_p+\frac{2}{r_0}}_{p,p })$,
	\begin{equation*}
     \begin{aligned}
	\|B(2U_f,v)&\|_{\mathcal{L}^{r_0}([0,T];\dot{B}^{s_p+\frac{2}{r_0}}_{p,p})}\leq \|B(2U_f,v)\|_{\mathcal{L}^{r_0}([0,T];\dot{B}^{s_{\bar{p}}+\frac{2}{r_0}}_{\bar{p},p})} \\
	&\leq2C(p)\|U_f\|_{L^{\infty}(\RR_+,L^{3,\infty})}\|v\|_{\mathcal{L}^{r_0}([0,T];\dot{B}^{s_p+\frac{2}{r_0}}_{p,p})},
	\end{aligned}
	\end{equation*}
	where $C(p)\to\infty$ as $p\to\infty$ and $\frac{1}{\bar{p}}=\frac{1}{3}+\frac{1}{6p}$.
	By taking $c(p)\leq(4C(p))^{-1}$, then by the above estimate and \reff{small-nsf(0)} , we have
	\beq
	\lambda:=2\gamma_1\bar{C}(p)\|U_f\|_{L^{\infty}(\RR_+,L^{3,\infty})}<1,
	\enq 
	and
	\beq
	\|B(2U_f,v)\|_{\mathcal{L}^{r_0}([0,T];\dot{B}^{s_p+\frac{2}{r_0}}_{p,p})}\leq\lambda \|v\|_{\mathcal{L}^{r_0}([0,T];\dot{B}^{s_p+\frac{2}{r_0}}_{p,p})}.
	\enq 
	Therefore according to Lemma \ref{fixed point} and the fact that 
	\beq
	\|e^{t\La }u_0\|_{\mathcal{L}^{r_0}(\RR_+,\dot{B}^{s_p+\frac{2}{r_0}}_{p,p})}\lesssim\|u_0\|_{\dot{B}^{s_p}_{p,p}},
	\enq 
	one can find a small enough number $\eta(p,f)$ such that, for any $u_0\in\dot{B}^{s_p}_{p,p}$ with $\|u_0\|_{\dot{B}^{s_p}_{p,p}}<\eta$, there exists a unique global solution $v\in\mathcal{L}^{r_0}(\RR_+,\dot{B}^{s_p+\frac{2}{r_0}}_{p,p})$ with initial data $u_0$ satisfying that 
	\beq
	\|v\|_{\mathcal{L}^{r_0}(\RR_+,\dot{B}^{s_p+\frac{2}{r_0}}_{p,p})}\leq \frac{1-\lambda}{2\gamma}.
	\enq 
	Moreover we notice that for any given $u_0\in \dot{B}^{s_p}_{p,p}$ and any $T>0$,
	\beq
	&&\|e^{t\La}u_0\|_{\mathcal{L}^{r_0}([0,T];\dot{B}^{s_p+\frac{2}{r_0}}_{p,p})}=
	\Big(\sum_{j\in\mathbb{Z}}\big(2^{j(s_p+\frac{2}{r_0})}\|\La_j e^{t\La }u_0\|_{L^{r_0}([0,T];L^p_x)}\big)^p\Big)^{\frac{1}{p}}\\
	&=&\big\|\big(1-e^{-r_0Tc_p2^{2j}}\big)^{\frac{1}{r_0}}2^{js_p}\|\La_j u_0\|_{L^p}\big\|_{\ell^p}.
	\enq 
	Next, an application of Lebesgue's dominated convergence theorem shows that 
	\beq
	\lim_{t\to0}\big\|\big(1-e^{-r_0Tc_p2^{2j}}\big)^{\frac{1}{r_0}}2^{js_p}\|\La_j u_0\|_{L^p}\big\|_{\ell^p}=0.
	\enq 
	It follows that for any given $u_0\in \dot{B}^{s_p}_{p,p}$, there exists $T_0$ such that 
	\beq
	\|e^{t\La}u_0\|_{\mathcal{L}^{r_0}([0,T_0];\dot{B}^{s_p+\frac{2}{r_0}}_{p,p})}<\frac{(1-\lambda)^2}{4\gamma}.
	\enq 
	Therefore we have $v\in\mathcal{L}^{r_0}([0,T_0];\dot{B}^{s_p+\frac{2}{r_0}}_{p,p})$.

   Hence for any $u_0\in \dot{B}^{s_p}_{p,p}$, there exists a $T^*(u_0,f)>0$ such that $v\in\mathcal{L}^{r_0}([0,T^*),\dot{B}^{s_p}_{p,p})$. And according to Lemma \ref{Heat}, we obtain that $v\in\mathcal{L}^{r}([0,T^*);\dot{B}^{s_p+\frac{2}{r}}_{p,p})$ for any $r\in[r_0,\infty]$, which implies that $v\in\mathbb{L}^{r_0:\infty}_p[T<T^*]$.

   When $T^*<\infty$, we claim that 
    \beq
    \lim_{T\to T^*}\|v\|_{\mathbb{L}^{r_0:\infty}_{p}(T)}=\infty,
    \enq  
    by a similar argument in \cite{C1}. In fact, if 
    \beq
    \lim_{T\to T^*}\|v\|_{\mathbb{L}^{r_0:\infty}_{p}(T)}<\infty,
    \enq 
    in particular,
    \beq
    v\in \mathcal{L}^{\infty}([0,T^*),\dot{B}^{s_p}_{p,p})
    \enq 
    which implies that for any $\eps>0 $, there exists $N(\eps)$ such that for any $t'\in[0,T^*)$
    \beq
    \Big(\sum_{|j|>N(\eps)}2^{pjs_p}\|\La_j v(t')\|^{p}_{L^{p}}\Big)^{\frac{1}{p}}<\eps.
    \enq 
    Therefore for any fixed $t'\in [0,T^*)$,
    \beq
    &&\|e^{t\La }v(t')\|_{\mathcal{L}^{r_0}([0,T];\dot{B}^{s_p+\frac{2}{r_0}}_{p,p})}=\big\|\big(1-e^{-r_0Tc_p2^{2j}}\big)^{\frac{1}{r_0}}2^{js_p}\|\La_j v(t')\|_{L^p}\big\|_{\ell^p}\\
    &\lesssim&\Big(\sum_{|j|>N(\eps)}2^{pjs_p}\|\La_j v(t')\|^p_{L^p}\Big)^{\frac{1}{p}}+2^{N(\eps)s_p}(1-e^{r_0T})\|v\|_{\mathcal{L}^{\infty}([0,T^*),\dot{B}^{s_p}_{p,p})}\\
    &\lesssim&\eps+2^{N(\eps)s_p}(1-e^{r_0T})\|v\|_{\mathcal{L}^{\infty}([0,T^*),\dot{B}^{s_p}_{p,p})},
    \enq 
    which implies for any $t'\in [0,T^*)$, there exists a $\tau$ independent of $t'\in[0,T^*)$ such that 
    \beq
    \|e^{t\La}v(t')\|_{\mathcal{L}^{r_0}([0,\tau];\dot{B}^{s_p+\frac{2}{r_0}}_{p,p})}<\frac{(1-\lambda)^2}{4\gamma}.
    \enq 
    Hence we obtain that $v\in \mathcal{L}^{r_0}([0,T^*+\tau/2],\dot{B}_{p,p}^{s_p+\frac{2}{r_0}})$, which contradicts the maximality of $T^*$.
    
    To finish the proof of the first statement in Theorem \ref{NSexistence}, we need to prove $v$ is the unique solution to $(PNS_{U_f})$ with initial data $u_0\in\dot{B}^{s_p}_{p,p}$ in $\mathbb{L}^{r_0:\infty}_p[T<T^*]$. We suppose that $\bar{v}\in \mathbb{L}^{r_0:\infty}_p(T)$ for some $T<T^*$ is another solution to $(PNS_{U_f})$ with the same initial data $u_0$ and set $w:=\bar{v}-v$. It is easy to check that $w$ satisfies that 
    \beq
    w=B(w,w)+B(2(U_f+v),w).
    \enq 
    A similar argument as above implies that 
    \beq
    \begin{aligned}
    \|w\|_{\mathcal{L}^{r_0}_t(\dot{B}_{p,p}^{s_p+\frac{2}{r_0}})}\leq& K_0 \|w\|^2_{\mathcal{L}^{r_0}_t(\dot{B}_{p,p}^{s_p+\frac{2}{r_0}})}+K_0\|v\|_{\mathcal{L}^{r_0}_t(\dot{B}_{p,p}^{s_p+\frac{2}{r_0}})}\|w\|_{\mathcal{L}^{r_0}_t(\dot{B}_{p,p}^{s_p+\frac{2}{r_0}})}\\
    &+\lambda\|w\|_{\mathcal{L}^{r_0}_t(\dot{B}_{p,p}^{s_p+\frac{2}{r_0}})},
    \end{aligned}
    \enq 
    for some $K_0>0$. This fact implies that one can find a $K_1>K_0>0$ such that
    \beq
    \|w\|_{\mathcal{L}^{r_0}_t(\dot{B}_{p,p}^{s_p+\frac{2}{r_0}})}\leq K_1 \|w\|^2_{\mathcal{L}^{r_0}_t(\dot{B}_{p,p}^{s_p+\frac{2}{r_0}})}+K_1\|v\|_{\mathcal{L}^{r_0}_t(\dot{B}_{p,p}^{s_p+\frac{2}{r_0}})}\|w\|_{\mathcal{L}^{r_0}_t(\dot{B}_{p,p}^{s_p+\frac{2}{r_0}})}.
    \enq 
    We infer that 
    \beqq\label{wK}
    \|w\|_{\mathcal{L}^{r_0}_t(\dot{B}_{p,p}^{s_p+\frac{2}{r_0}})}(K_1 \|w\|_{\mathcal{L}^{r_0}_t(\dot{B}_{p,p}^{s_p+\frac{2}{r_0}})}+K_1\|v\|_{\mathcal{L}^{r_0}_t(\dot{B}_{p,p}^{s_p+\frac{2}{r_0}})}-1)\geq 0.
    \enqq 
    By continuity of the norm of $\mathcal{L}^{r_0}_t(\dot{B}_{p,p}^{s_p+\frac{2}{r_0}})$ with respect to the time, there exists $\tilde{T}$ such that for all $t\in[0,\tilde{T}]$
    \beq
    K_1 \|w\|_{\mathcal{L}^{r_0}_t(\dot{B}_{p,p}^{s_p+\frac{2}{r_0}})}+K_1\|v\|_{\mathcal{L}^{r_0}_t(\dot{B}_{p,p}^{s_p+\frac{2}{r_0}})}-1<0.
    \enq 
    Therefore, for $t\in[0,\tilde{T}]$ relation \reff{wK} can hold only if $\|w\|_{\mathcal{L}^{r_0}_{\tilde{T}}(\dot{B}_{p,p}^{s_p+\frac{2}{r_0}})}=0$, that is $w\equiv0$ on $[0,\tilde{T}]$, by continuity again, $w\equiv0$ on $[0,T]$ for any $T<T^*$.
    
    The first statement of the theorem is proved.
   
    \textbf{Propagation of perturbations:}
    
    Next we turn to show the propagation of $v$. According to Theorem \ref{Regu} by choosing $w=U_f$ and $\bar{w}=0$, we have that $v$ can be written as, for any $T\in[0,T^*)$
    \beq
    v=v^H+v^S,
    \enq 
    where $v^H=H_{N_0}\in \mathbb{L}^{1:\infty}_p(\infty)$  and $v^S=W_{N_0}+Z_{N_0}\in L^{\infty}([0,T],L^{3,\infty})$ with $N_0$ being the largest integer such that $3(N_0-1)<p$.   
    We first notice that in the case when $\bar{w}=0$, $H_{N}$ is a sum of a finite number of multilinear operators of order at most $N-1$, acting on $e^{t\La }u_0$ only. Hence according to Lemma \ref{weak-L^3-continuous} and an inductive argument, we obtain for any $N\geq 2$,
    \beq
    H_N\in L^{\infty}(\RR_+,L^{3,\infty}),
    \enq 
    which implies that $v^H\in L^{\infty}(\RR_+,L^{3,\infty})$.
    
    To prove the second statement of the theorem, we are left with the proof of $v\in C_w([0,T^*),L^{3,\infty})$. We notice that by Lemma 2 \& 3 in \cite{B1}, $e^{t\La }u_0\in C_w([0,\infty),L^{3,\infty})$. This fact combined with Lemma \ref{weak-L^3-continuous} implies that for any $T\in[0,T^*)$
    \beq
    v=v^H+v^S=H_{N_0}+ W_{N_0}+Z_{N_0}\in C_w([0,T^*),L^{3,\infty}).
    \enq 
    The second statement of Theorem \ref{NSexistence} is proved.

    \textbf{Finite energy of perturbations:}
     
    In the last part of the proof, we show that $v$ has finite energy on $[0,T]$ for any $T<T^*$, if $u_0\in \dot{B}^{s_p}_{p,p}\cap L^2$. 
    
    Now we suppose that $u_0\in \dot{B}^{s_p}_{p,p}\cap L^2$ and $T\in [0,T^*)$ is fixed. We recall that 
    \beq
    v=e^{t\La }u_0+B(v,v)+B(2U_f,v).
    \enq 
    It is clear that $e^{t\La }u_0\in E(\infty)$. Hence we only need to prove $B(v,v)+B(2U_f,v)\in E(T)$.
    
    By replacing $v$ of $B(v,v)+B(2U_f,v)$ by $v^H+v^S$, we have
    \beq
    \begin{aligned}
    	B(v,v)+B(2U_f,v)=&B(v^H,v^H+ 2v^S+2U_f)\\
    	&+B(v^S,v^S+2U_f).
    \end{aligned}
    \enq 
    By applying Lemma \ref{heat-energy} and the fact that $e^{t\La }u_0\in E(\infty)$, we first obtain $v^H=H_{N_0}\in E(\infty)$. Again by Lemma \ref{heat-energy}, we obtain that 
    \beq
    B(v^H,v^H+ 2v^S+2U_f)\in E(T),
    \enq 
    provided that $v^H\in\mathbb{L}^{1:\infty}_{p}(\infty)$ and $v^S+U_f\in \mathcal{L}^{\infty}([0,T],\dot{B}^{s_q}_{q,\infty})$ where $q=\frac{3p}{p-2}$.
   
   Now we turn to the proof of $B(v^S,v^S+2U_f)\in E(T)$. We recall that 
   \beqq\label{wzu}
   v^S+2U_f\in L^{\infty}([0,T],L^{3,\infty}).
   \enqq 
   On the other hand, by  $v^S\in \mathbb{L}^{r_0:\infty}_{\tilde{p},p}(T)$ with some $r_0=\frac{2p}{p-1}$ and some $\bar{p}<3$, we have
   \beq
   v^S\in \mathbb{L}^{3:\infty}_{\bar{p},p}(T)\in \mathbb{L}^{3:\infty}_{6,\infty}(T),
   \enq 
   provided that $\frac{2p}{p-1}<3$ for any $p>3$ and standard embedding $\mathbb{L}^{3:\infty}_{\bar{p},p}(T)\hookrightarrow\mathbb{L}^{3:\infty}_{6,\infty}(T)$.  Hence
   by Lemma \ref{L6}, we obtain 
   \beqq\label{wz}
   v^S\in L^{2}([0,T],L^{6,2}).
   \enqq  
   Thanks to \reff{wzu} and \reff{wz}, applying Lemma \ref{heat-energy},
     we obtain 
   \beq
    B(v^S,v^S+2U_f)\in E(T).
    \enq 
    Therefore we obtain $v\in E(T)$
Theorem \ref{NSexistence} is proved.
\end{proof}

\subsection{Uniqueness of $(NSf)$}
Although the solutions in Theorem \ref{NSexistence} need not be unique in $L^{\infty}_t(L^{3,\infty})$, the following arguement shows that the gap between two different solutions has infinite energy.

\begin{proof}[Proof of Theorem \ref{uniquness}]
 Let $u_f\in C_w([0,T^*),L^{3,\infty})$ be a solution to $(NSf)$ constructed in Theorem \ref{NSexistence} with initial data $u_0\in L^{3,\infty}\cap\dot{B}^{s_p}_{p,p}$.

 We now prove the first statement in Theorem \ref{uniquness}:
 
 Assume that $\tilde{u}_f\in C_w([0,T],L^{3,\infty})$ for some $T<T^*$ is another solution to $(NSf)$ with initial data $u_0$ and satisfies $w:=\tilde{u}_f-u_f=w_1+w_2$, where 
 \beq
 w_1\in \mathbb{L}^{r:\infty}_p(T)~~\mathrm{and}~~\|w_2\|_{L^{\infty}(\RR_+,L^{3,\infty})}<4c_1
 \enq 
 for some $p>3$, $2<r<\frac{2p}{p-3}$
 According to Theorem \ref{NSexistence}, $u_f$ can be decomposed as
 \beq
 u_f=v+U_f,
 \enq 
 where $v\in \mathbb{L}^{r:\infty}_p[T<T^*]$ and $U_f\in C_w(\RR_+,L^{3,\infty})$ with $\|U_f\|_{L^{\infty}(\RR_+,L^{3,\infty})}<2c_1$.
 
 We notice that 
 $w$ satisfies:
 \beq
 w&=&B(w,w)+2B(u_f,w)\\
 &=&B(w_1+w_2,w)+B(2u_f,w)\\
 &=&B(w_1+2v,w)+B(w_2+U_f,w).
 \enq 
 
 On the other hand, we notice that for any $q<3$,
 \beq
 \mathcal{L}^{\infty}([0,T],\dot{B}^{s_q}_{q,\infty})\hookrightarrow L^{\infty}([0,T],L^{3,\infty})
 \enq 
 combining with $w_1,v\in \mathbb{L}^{r:\infty}_p(T)$ and $w\in L^{\infty}([0,T],L^{3,\infty})$, using Proposition \ref{bilinear-besov}, we obtain that, for any $\tau\in[0,T]$
 \beqq\label{L-smooth}
 \begin{split}
 \|B(w_1+2v,w)\|_{L^{\infty}([0,\tau],L^{3,\infty})}\lesssim& \|B(w_1+2v,w)\|_{L^{\infty}([0,\tau],\dot{B}_{\bar{p},p}^{s_{\bar{p}}})}\\
 \leq& K \|w_1+2v\|_{\mathcal{L}^{r}([0,\tau];\dot{B}_{p,p}^{s_p+\frac{2}{r}})}\|w\|_{L^{\infty}([0,\tau],L^{3,\infty})}.
 \end{split}
 \enqq 
 And according to Lemma \ref{weak-L^3-continuous}, we obtain that 
 \beq
 \|B(w_2+U_f,w)\|_{L^{\infty}([0,\tau],L^{3,\infty})}\lesssim\|w_2+U_f\|_{L^{\infty}(\RR_+,L^{3,\infty})}\|w\|_{L^{\infty}([0,\tau],L^{3,\infty})}.
 \enq 
 From the smallness of $w_2$ and $U_f$, which is
 \beq
 \|w_2\|_{L^{\infty}(\RR_+,L^{3,\infty})}+\|U_f\|_{L^{\infty}(\RR_+,L^{3,\infty})}<6c_1,
 \enq 
 we obtain that
 \beqq\label{L-small}
 \|B(w_2+U_f,w)\|_{L^{\infty}([0,\tau],L^{3,\infty})}\leq\|w\|_{L^{\infty}([0,\tau],L^{3,\infty})},
 \enqq 
 provided that $c_1$ is small enough.
  
 By \reff{L-smooth} and \reff{L-small}, we obtain that  for any $\tau\in[0,T]$,
 \beq
 \|w\|_{L^{\infty}([0,\tau],L^{3,\infty})}\leq K\|w_1+2v\|_{\mathcal{L}^{r}([0,\tau];\dot{B}_{p,p}^{s_p+\frac{2}{r}})}\|w\|_{L^{\infty}([0,\tau],L^{3,\infty})}.
  \enq 
  By continuity of the norm of $\mathcal{L}^{r}_t(\dot{B}^{s_p+\frac{2}{r}}_{p,p})$ with respect to time, there exists $N$ real numbers $(T_i)_{1\leq i\leq N}$ such that $T_1=0$ and $T_N=T$, satisfying that 
  \beq
  [0,T]=\bigcup^{N-1}_{i=1}[T_i,T_{i+1}]~\mathrm{and}~\|w_1+2v\|_{\mathcal{L}^{r}([T_i,T_{i+1}];\dot{B}_{p,p}^{s_p+\frac{2}{r}})}\leq \frac{1}{2K},
  \enq 
  for all $i\in\{1,\ldots,N-1\}$. 
  
  Now we prove that $w\equiv0$ on  $[T_i,T_{i+1}]$ for all $i\in\{1,\ldots,N-1\}$ by induction. We first notice that 
  \beq
  \|w\|_{L^{\infty}([0,T_2],L^{3,\infty})}&\leq& K\|w_1+2v\|_{\mathcal{L}^{r}([0,T_2];\dot{B}_{p,p}^{s_p+\frac{2}{r}})}\|w\|_{L^{\infty}([0,T_2],L^{3,\infty})}\\
  &\leq&\frac{1}{2}\|w\|_{L^{\infty}([0,T_2],L^{3,\infty})},
  \enq 
  which implies that 
  \beq
  w\equiv0~~~~\mathrm{on}~~[0,T_2].
  \enq 
  Now we assume that $w\equiv0$ on $[0,T_{k}]$ for some $k\geq 2$. Hence 
  \beq
  \mathbf{1}_{[T_{k},T]}(t)w=w=B(w_1+2v,\mathbf{1}_{[T_{k},T]}(t)w)+B(w_2+U_f,\mathbf{1}_{[T_{k},T]}(t)w).
  \enq 
  Therefore we have the following bounds for $w$,
  \beq
  &&\|w\|_{L^{\infty}([T_k,T_{k+1}],L^{3,\infty})}=\|w\|_{L^{\infty}([0,T_{k+1}],L^{3,\infty})}\\
  &\leq&\frac{1}{2} \|B(w_1+2v,w)\|_{L^{\infty}([0,T_{k+1}],L^{3,\infty})}+\frac{1}{2} \|B(w_2+U_f,w)\|_{L^{\infty}([0,T_{k+1}],L^{3,\infty})}.
  \enq 
  Combining with \reff{L-small}, we have
  \beqq\label{k-step}
  \|w\|_{L^{\infty}([T_k,T_{k+1}],L^{3,\infty})}\leq  \|B(\mathbf{1}_{[T_{k},T]}(w_1+2v),w)\|_{L^{\infty}([0,T_{k+1}],L^{3,\infty})}.
  \enqq
  On the other hand, we notice that 
  \beq
  B(w_1+2v,w)=B(w_1+2v,\mathbf{1}_{[T_{k},T]}w)
  =B(\mathbf{1}_{[T_{k},T]}(w_1+2v),w),
  \enq 
  again by Lemma \ref{bilinear-besov}, we obtain that 
  \beq
  &&\|B(w_1+2v,w)\|_{L^{\infty}([0,T_{k+1}],L^{3,\infty})}=\|B(\mathbf{1}_{[T_{k},T]}(w_1+2v),w)\|_{L^{\infty}([0,T_{k+1}],L^{3,\infty})}\\
  &\leq& K \|\mathbf{1}_{[T_{k},T]}(w_1+2v)\|_{\mathcal{L}^{r}([0,T_{k+1}];\dot{B}_{p,p}^{s_p+\frac{2}{r}})}\|w\|_{L^{\infty}([0,T_{k+1}],L^{3,\infty})}\\
  &=&K \|w_1+2v\|_{\mathcal{L}^{r}([T_k,T_{k+1}];\dot{B}_{p,p}^{s_p+\frac{2}{r}})}\|w\|_{L^{\infty}([T_k,T_{k+1}],L^{3,\infty})}\\
  &\leq&\frac{1}{2}\|w\|_{L^{\infty}([T_k,T_{k+1}],L^{3,\infty})}.
  \enq 
  Hence, by the above estimate and \reff{k-step}, we have
  \beq
  \|w\|_{L^{\infty}([T_k,T_{k+1}],L^{3,\infty})}\leq \frac{1}{2}\|w\|_{L^{\infty}([T_k,T_{k+1}],L^{3,\infty})},
  \enq 
  which implies that 
  \beq
  w\equiv0~~\mathrm{on}~~[T_k,T_{k+1}].
  \enq 
  Then we have $w\equiv0$ on $[0,T]$. The first statement in Theorem \ref{uniquness} is proved.
  
  Now we turn to prove the second statement in Theorem \ref{uniquness}:

Assume that $\bar{u}_f\in C_w([0,T],L^{3,\infty})$ for some $T<T^*$ is another solution to $(NSf)$ with  same initial data $u_0$. We denote  $w:=\bar{u}_f-u_f$. By the assumption of the theorem, $w :=\bar{u}_f-u_f\in C([0,T],L^{3,\infty})$ with $w(0)=0$. We notice that $w$ satisfies the following equation on $[0,T]$
   \beq
	w(t)=B(w+2U_f,w)+B(2v,w),
	\enq 
	where $v:=u_f-U_f\in\mathbb{L}^{r_0:\infty}_{p}[T<T^*]$.
   According to Lemma \ref{weak-L^3-continuous}, we have
    \beqq 
	\begin{split}
	\|B(w+&2U_f,w)\|_{L^{\infty}([0,t],L^{3,\infty})}\\
	\leq& C\|w\|^2_{L^{\infty}([0,t],L^{3,\infty})}
	+C\|w\|_{L^{\infty}([0,t],L^{3,\infty})}\|U_f\|_{L^{\infty}([0,t],L^{3,\infty})}\\
	\leq &C\|w\|_{L^{\infty}([0,t],L^{3,\infty})}(\|U_f\|_{L^{\infty}(\RR_+,L^{3,\infty})}+\|w\|_{L^{\infty}([0,t],L^{3,\infty})}).
	\end{split}
	\enqq 

According to the continuity of $w$ in $L^{3,\infty}$ and the fact that $w(0)=0$, one can choose a $T_1$ such that, combined with the smallness of $U_f$,
	\beq
	\|U_f\|_{L^{\infty}([0,t],L^{3,\infty})}+\|w\|_{L^{\infty}([0,t],L^{3,\infty})}\leq \frac{1}{3C},
	\enq 	
	which implies that 
	\beqq\label{w2u}
	\|B(w+2U_f,w)\|_{L^{\infty}([0,T_1],L^{3,\infty})}\leq \frac{1}{3}\|w\|_{L^{\infty}([0,T_1],L^{3,\infty})}.
	\enqq 

By Lemma \ref{bilinear-besov}, by a similar argument as the above paragraph, we have
 that for any $t\in[0,T]$
	\beq
	\|B(2v,w)\|_{L^{\infty}([0,t],L^{3,\infty})}\leq C\|v\|_{\mathcal{L}^{r_0}([0,t],\dot{B}_{p,p}^{s_p+\frac{2}{r_0}})}\|w\|_{L^{\infty}([0,t],L^{3,\infty})}.
	\enq 
By continuity of the norm of $\mathcal{L}^{r_0}([0,t],\dot{B}_{p,p}^{s_p+\frac{2}{r_0}})$ with respect to the time, there exists $T_2>0$ such that 
	\beq
	C\|v\|_{\mathcal{L}^{r_0}([0,T_2],\dot{B}_{p,p}^{s_p+\frac{2}{r_0}})}<\frac{1}{3},
	\enq 
	which implies that 
	\beqq\label{wv}
	\|B(2v,w)\|_{L^{\infty}([0,T_2],L^{3,\infty})}\leq\frac{1}{3}\|w\|_{L^{\infty}([0,T_2],L^{3,\infty})}.
	\enqq 
	According to \reff{w2u} and \reff{wv}, taking $T_0=\min\{T_1,T_2\}$, we have
	\beq
	\begin{aligned}
	\|w&\|_{L^{\infty}([0,T_0],L^{3,\infty})}\\
	&\leq \|B(w+2U_f,w)\|_{L^{\infty}([0,T_0],L^{3,\infty})}+\|B(2v,w)\|_{L^{\infty}([0,T_0],L^{3,\infty})}\\
	&\leq \frac{2}{3}\|w\|_{L^{\infty}([0,T_0],L^{3,\infty})},
	\end{aligned}
	\enq 
	which implies $w\equiv0$ on $[0,T_1]$ and, by continuity, $w\equiv0$ on $[0,T]$ too.
	Therefore we proved the second result in the theorem.
	
	Now we are left with the proof of the last statement of the theorem. Since we need to apply Lemma \ref{gp-energy} to obtain a uniform energy bound, we set $3<p<5$ to make sure that  $v\in \mathcal{L}^{p}([0,T],\dot{B}_{p,p}^{s_p+\frac{2}{p}})$ with $s_p+\frac{2}{p}>0$.

Assume that $\bar{u}_f\in C_w([0,T],L^{3,\infty})$ for some $T<T^*$ is another solution to $(NSf)$ with  same initial data $u_0$. We denote $\omega=\bar{u}_f-u_f$. By the assumption of the theorem, $\omega \in L^{\infty}([0,T],L^2)\cap L^{2}([0,T],\dot{H}^1)$ satisfies the following system:
    \beq
    \left\{
    \begin{array}{ll}
    \p_t \omega-\La \omega+\omega\cdot\nabla \omega+u_f\cdot\nabla \omega+\omega\cdot\nabla u_f=-\nabla \pi,\\
     \nabla\cdot \omega=0,\\
     \omega|_{t=0}=0.
     \end{array}
      \right.
     \enq 
    Therefore we have the following energy equation, for any $t\in(0,T)$,
     \beq
     \|\omega(t)\|_{L^2}^2+2\int_0^t\|\nabla \omega(s)\|^2_{L^2}ds=-2\int_0^t\int_{\RR^3}\omega\cdot\nabla u_f\cdot \omega dxds.
     \enq 
     According to Theorem \ref{NSexistence}, $u_f$ can be written as $u_f=U_f+v$, where $U_f:=NSf(0)$ is the solution to $(NSf)$ with initial data $0$ and $v\in \mathbb{L}^{r_0:\infty}_p[T<T^*]$ is the solution to $(PNS_{U_f})$ with initial data $u_0$.
	 Therefore we have that 
	 \beq
	 \begin{aligned}
	 |\int_0^t\int_{\RR^3}\omega&\cdot\nabla u_f\cdot\omega dxds|\\
	 &\leq |\int_0^t\int_{\RR^3}\omega\cdot\nabla (U_f)\cdot\omega dxds|+|\int_0^t\int_{\RR^3}\omega\cdot\nabla v\cdot\omega dxds|.\\
	 \end{aligned}
	 \enq
	 By Young's inequality in Lorentz spaces, the first term on the right can be controlled by:
	 \beq
	 \begin{aligned}
	 	|\int_0^t\int_{\RR^3}\omega&\cdot\nabla (U_f)\cdot\omega dxds|= |\int_0^t\int_{\RR^3}\omega\cdot\nabla \omega\cdot U_fdxds|\\
	 	 \leq&\int_0^t\|\omega(s)\|_{L^{6,2}}\|\nabla\omega(s)\|_{L^2}\|U_f(s)\|_{L^{3,\infty}}ds.
	 \end{aligned}
	 \enq 
	 We observe now that $\dot{H}^1(\RR^3)\hookrightarrow L^{6,2}(\RR^3)$. This embedding follows from the Young inequality for Lorentz spaces after noticing that $(-\La)^{-\frac{1}{2}}$ is a convolution operator with a function bounded by $\frac{c}{|x|^2}$ whcih therefore belongs to $L^{\frac{3}{2},\infty}$. Hence
	 \beq
	 \int_0^t\|\omega(s)\|_{L^{6,2}}\|\nabla\omega(s)\|_{L^2}\|U_f(s)\|_{L^{3,\infty}}ds
	 \leq \|U_f\|_{L^{\infty}(\RR_+,L^{3,\infty})}\int_0^t\|\nabla\omega(s)\|^2_{L^2}ds.
	 \enq 
	 Since $U_f$ is small enough in $L^{\infty}(\RR_+,L^{3,\infty})$, we obtain 
	 \beq
	 |\int_0^t\int_{\RR^3}\omega\cdot\nabla (U_f)\cdot\omega dxds|\leq \frac{1}{3}\int_0^t\|\nabla \omega(s)\|^2_{L^2}ds.
	 \enq 
	 
	 We recall that $v\in\mathbb{L}^{r_0:\infty}_{p,p}(T)$ with $3<p<5$ and one can take $r_0=\frac{2p}{p-1}$. This implies $v\in\mathcal{L}^{p}([0,T],\dot{B}_{p,p}^{s_p+\frac{2}{p}})$ with $\frac{3}{p}+\frac{2}{p}>1$.
	 Applying Lemma \ref{gp-energy}, we obtain
	 \beq
	 |\int_0^t\int_{\RR^3}\omega\cdot\nabla v\cdot\omega dxds|\leq C\int_0^t\|\omega(s)\|_{L^2}^2\|v(s)\|^p_{\dot{B}_{p,p}^{s_4+\frac{2}{p}}}ds+\int_0^t\|\nabla \omega(s)\|^2_{L^2}ds.
	 \enq 
	 Then $w$ satisfies the following energy inequality,
	\beq
	\|\omega(t)\|_{L^2}^2+\int_0^t\|\nabla \omega(s)\|^2_{L^2}ds\leq C\int_0^t\|\omega(s)\|_{L^2}^2\|v(s)\|^p_{\dot{B}_{p,p}^{s_p+\frac{2}{p}}}ds.
	\enq 
	 By Gronwall's inequality and the fact that $w|_{t=0}=0$, we get 
	\beq
	\|\omega(t)\|_{L^2}^2+\int_0^t\|\nabla \omega(s)\|^2_{L^2}ds\leq0.
	\enq 
	Then $\omega\equiv0$ on [0,T], which implies that $u_f\equiv\bar{u}_f$ on $[0,T]$. Hence we have proved the second statement of Theorem \ref{uniquness}.
	
	Theorem \ref{uniquness} is proved.

\end{proof}

\section{Long-time Behavior and Stability of Global Solutions}
Let $f$ be a given external force satisfying the assumption of Theorem \ref{NSexistence}.
We consider a global in time solution $u_f$ to $(NSf)$ constructed in Theorem \ref{NSexistence} with initial data $u_0\in  L^{3,\infty}\cap\dot{B}^{s_p}_{p,p}$. Also we are interested in the stability of this kind of global solutions.

\subsection{Long-time behavior of global solutions}
Now let us start to prove Theorem \ref{long-time-u}. In order to apply a weak-strong argument, we need to use the regularity result in Theorem \ref{Regu} to obtain the local in time part has a local in time finite energy by a similar argument to the proof of the third statement of Theorem \ref{NSexistence}. However, we need to deal with a more complicated drift term than before.
\begin{proof}
    Let $u_0\in L^{3,\infty}\cap \dot{B}^{s_p}_{p,p}$. Suppose that $u_f\in C_w(\RR_+,L^{3,\infty})$ is a solution to $(NSf)$ with initial data $u_0$ such that 
    \beq
    v:=u_f-U_f\in \mathbb{L}^{r_0:\infty}_p[T<\infty],
    \enq  
    where $U_f:= NSf(0)$ and $r_0=\frac{2p}{p-1}$. By the smallness assumption on $f$, we have $U_f\in L^{\infty}(\RR_+,L^{3,\infty})$. Therefore to prove the theorem, we need to prove $v\in L^{\infty}(\RR_+,L^{3,\infty})$. To achieve this goal, we only need to prove $v\in \mathbb{L}^{r_0:\infty}_p(\infty)$. More precisely, if $v\in \mathbb{L}^{r_0:\infty}_p(\infty)$, by choosing $T=\infty$, $w=U_f$ and $\bar{w}=0$, Theorem \ref{Regu} implies $v$ can be written as
    \beq
    v=v^H+v^S,
    \enq 
    where $v^H=H_{N_0}\in \mathbb{L}^{1:\infty}_p(\infty)$  and $v^S=W_{N_0}+Z_{N_0}\in L^{\infty}(\RR_+,L^{3,\infty})$ with $N_0$ being the largest integer such that $3(N_0-1)<p$.  We recall that in the case when $\bar{w}=0$, $v^H=H_{N_0}$ is a sum of a finite number of multilinear operators of order at most $N_0-1$, acting on $e^{t\La }u_0$ only.
    
     Hence according to $u_0\in L^{3,\infty}$, Lemma \ref{weak-L^3-continuous} implies $H_{N_0}\in L^{\infty}(\RR_+,L^{3,\infty})$. Thus $v\in L^{\infty}(\RR_+,L^{3,\infty})$.

    Now we start to prove that $v\in \mathbb{L}^{r_0:\infty}_p(\infty)$:
    
     We  use the method introduced by C.Calderón in \cite{CC} to prove results on weak solutions in $L^p$ spaces, and used in \cite{GP} in the context of 2D Navier-Stokes equations: we split the initial data into two parts,
 $u_0=\omega_0+\bar{v}_0$, where $\omega_0\in L^{3,\infty}\cap \dot{B}^{s_p}_{p,p}\cap L^2$ and $\bar{v}_0\in L^{3,\infty}\cap \dot{B}^{s_p}_{p,p}$ such that 
	\beq
	\|\bar{v}_0\|_{L^{3,\infty}}<\eps(p)<c(p),
	\enq 
   and its associated solution $\bar{v}$ to $(PNS_{U_f})$ satisfies that $$\|\bar{v}\|_{\mathbb{L}^{r_0:\infty}_{p}(\infty)}\leq C(f)\|\bar{v}_0\|_{L^{3,\infty}}.$$
   We define $\omega:=v-\bar{v}$. It is easy to find that $\omega$ satisfies the following system,

    \beq
\left\{
  \begin{array}{ll}
  \p_t \omega-\La \omega+\omega\cdot\nabla \omega+(U_{f}+\bar{v})\cdot\nabla \omega+\omega\cdot\nabla (U_f+\bar{v})=-\nabla \pi,\\
  \nabla\cdot \omega=0,\\
  \omega|_{t=0}=\omega_0.
  \end{array}
\right.
\enq 
Also $\omega$ can be written as the following integral form
    \beq
    \omega=e^{t\La }\omega_0+B(\omega,v+\bar{v}+2U_f).
    \enq 

\textbf{Step 1:}
We first show that for any $T\in(0,\infty)$, $\omega\in E(T)$. Suppose that $T>0$ is fixed. We notice that $e^{t\La }\omega_0\in E(T)$ provided $\omega_0\in L^2$. Applying Theorem \ref{Regu}, by taking $w=U_f$ and $\bar{w}=v$, we obtain that $\omega$ can be written as
\beq
\omega=\omega^H+\omega^S,
\enq 
where $\omega^H\in \mathbb{L}^{1:\infty}_p(\infty)$  and $\omega^S\in \mathbb{L}^{r_0:\infty}_{\tilde{p},p}$ for some $2<\tilde{p}<3$ . Therefore we obtain
 \beq
 \omega^S\in \mathbb{L}^{3:\infty}_{6,\infty}(T),
 \enq 
 provided that $r_0=\frac{2p}{p-1}<3$ for any $p>3$. Hence by Lemma \ref{heat-energy}, we have 
 \beq
 B(\omega^S,v+\bar{v}+2U_f)\in E(T),
 \enq 
 as $v+\bar{v}+2U_f\in L^{\infty}([0,T],L^{3,\infty})$. 
 
 We recall that $\omega^H=H^E_{N_0}$, where  $H^E_{N_0}$ can be written as
 \beq
 H^E_{N_0}=H^E_{N_0-1}+\sum_{M=0}^{N_0-2}B^M_{N_0-1,N_0-1}(\bar{v}^{\otimes M},v_L^{\otimes(N_0-1-M)}),
 \enq 
 where $B^M_{N_0-1,N_0-1}$ are $(N_{0}-1)$-linear operators and $v_L=e^{t\La }\omega_0$. 
 We recall that 
 \beq
 H^E_2=e^{t\La }\omega_0~~\mathrm{and}~~H^E_3=H^E_2+B(e^{t\La }\omega_0,e^{t\La }\omega_0)+B(\bar{v},e^{t\La }\omega_0).
 \enq 
 Therefore by Lemma \ref{heat-energy} and an inductive argument, we obtain that
 \beq
 H^E_{N_0}\in E(T),
 \enq 
 provided that $\omega_0\in L^2$ and $\bar{v}\in\mathbb{L}^{r_0:\infty}_p(T)$. Applying Lemma \ref{heat-energy} again, we have
 \beq
 B(\omega^H,v+\bar{v}+2U_f)\in E(T),
 \enq 
 as $v+\bar{v}+2U_f\in L^{\infty}([0,T],\dot{B}^{s_p}_{p,\infty})$ deduced by Lemma \ref{embedding}.
 Therefore we obtain that for any $T\in (0,\infty)$, $\omega\in E(T)$.
 
 \textbf{Step 2:} In this step we show a global energy estimate for $\omega$. Let us write an energy estimate in $L^2$, starting at some time $t_0\in (0,\infty)$. We get 
\beq
\|\omega(t)\|_{L^2}^2+2\int_{t_0}^t\|\omega (s)\|_{L^2}^2ds=\|\omega(t_0)\|_{L^2}^2-2\int_{t_0}^t\int_{\RR^3}(\omega\cdot\nabla (\bar{v}+U_f)\cdot \omega dxds.
\enq 
We notice that
\beq
|\int_{t_0}^t\int_{\RR^3}(\omega\cdot\nabla U_f)\cdot \omega dxds|
\leq\|U_f\|_{L^{\infty}(\RR_+,L^{3,\infty})}\int_{t_0}^t\|\omega\|_{L^{6,2}}\|\nabla \omega\|_{L^2}.
\enq 
We recall that $\dot{H}^1(\RR^3)\hookrightarrow L^{6,2}(\RR^3)$, which combined with the above relation implies that
\beq
|\int_{t_0}^t\int_{\RR^3}(\omega\cdot\nabla U_f)\cdot \omega dxds|
\leq\|U_f\|_{L^{\infty}(\RR_+,L^{3,\infty})}\int_{t_0}^t\|\nabla \omega(s)\|^2_{L^2}ds
\enq 
Since $\|U_f\|_{L^{\infty}(\RR_+,L^{3,\infty})}\leq 2c_1(p)$ with $c_1(p)$ is small enough, hence we obtain
\beqq\label{Uf-e} 
|\int_{t_0}^t\int_{\RR^3}(\omega\cdot\nabla U_f)\cdot \omega dxds|
\leq\frac{1}{4}
\int_{t_0}^t\|\nabla \omega(s)\|^2_{L^2}ds.
\enqq  
On the other hand, by a similar argument as above, we have that $\bar{v}$ can be written as, 
  \beq
    \bar{v}=\bar{v}^H+\bar{v}^S,
    \enq 
    where $\bar{v}^H\in \mathbb{L}^{1:\infty}_p(\infty)$ and $\bar{v}^S\in\mathbb{L}^{r_0;\infty}_{\tilde{p},p}$ for some $2<\tilde{p}<3$. Hence
    \beq
    \begin{aligned}
    |\int_{t_0}^t&\int_{\RR^3}(\omega\cdot\nabla \bar{v})\cdot \omega dxds|\\
    &\leq |\int_{t_0}^t\int_{\RR^3}(\omega\cdot\nabla \bar{v}^H)\cdot \omega dxds|+|\int_{t_0}^t\int_{\RR^3}(\omega\cdot\nabla (\bar{v}^{S})\cdot \omega dxds|.
    \end{aligned}
    \enq 
     We recall that $\bar{v}^H$ is a sum of a finite number of multilinear operators of order at most $N_0-1$, acting on $e^{t\La }u_0$ only, as $\bar{v}\in \mathbb{L}^{r_0:\infty}_{p}(\infty)$ is the small global solution to $(PNS_{U_f})$, which is the case of $\bar{w}=0$.   Then by Lemma \ref{kato} (for details see \cite{gip}), we obtain that there exists $K$ only depending on $p$,
    \beq
    \sup_{t>0}t^{\frac{1}{2}}\|\bar{v}^H\|_{L^{\infty}}\lesssim\|\bar{v}_0\|_{\dot{B}^{s_p}_{p,p}}\leq K\eps(p)
    \enq 
    Therefore
    \beqq\label{HN2L}
    \begin{aligned}
    |\int_{t_0}^t\int_{\RR^3}(\omega\cdot\nabla \bar{v}^H)\cdot \omega dxds|&\leq \int_{t_0}^t\|\omega(s)\|_{L^2}\|\nabla\omega\|_{L^2}\sqrt{s}\|\bar{v}^H\|_{L^{\infty}}\frac{ds}{\sqrt{s}}\\
    &\leq \frac{1}{4}\int_{t_0}^{t}\|\nabla\omega(s)\|_{L^2}^2ds+K\eps^2\int_{t_0}^t\|\omega(s)\|^2_{L^2}\frac{ds}{\sqrt{s}}.
    \end{aligned}
    \enqq 
  Again by Theorem \ref{Regu},
   we also notice that there exists $K_1$ only depending on $p$
  \beq
  \|\bar{v}^{S}\|_{L^{\infty}(\RR_+,L^{3,\infty})}\lesssim\|\bar{W}_{N_0}\|_{\mathbb{L}^{r_0:\infty}_{\bar{p},p}(\infty)}+\|\bar{Z}_{N_0}\|_{\mathbb{L}^{r_N;\infty}_{p_{N_2}}(\infty)}\lesssim\|\bar{v}_0\|_{\dot{B}^{s_p}_{p,p}}\leq K_1(\eps(p)).
  \enq 
  Hence we obtain 
  \beq
  \begin{aligned}
 |\int_{t_0}^t\int_{\RR^3}&(\omega\cdot\nabla (\bar{v}^S)\cdot \omega dxds|\\
&\leq\|\bar{v}^S\|_{L^{\infty}(\RR_+,L^{3,\infty})}\int_{t_0}^t\|\omega(s)\|_{L^{6,2}}\|\nabla \omega(s)\|_{L^2}ds\\
&\leq \|\bar{v}^S\|_{L^{\infty}(\RR_+,L^{3,\infty})}\int_{t_0}^t\|\nabla \omega(s)\|^2_{L^2}ds\\
&\leq K_1\eps(p)\int_{t_0}^t\|\nabla \omega(s)\|^2_{L^2}ds.
  \end{aligned}
  \enq 
  Since $\eps(p)$ is small enough, we have
  \beqq\label{wz-e}
  |\int_{t_0}^t\int_{\RR^3}(\omega\cdot\nabla (\bar{v}^S)\cdot \omega dxds|\leq \frac{1}{4}\int_{t_0}^t\|\nabla \omega(s)\|^2_{L^2}ds.
  \enqq 
  According to \reff{Uf-e}, \reff{HN2L} and \reff{wz-e}, we have the following energy estimate for $w$,
  \beq
  \|\omega(t)\|_{L^2}^2+\frac{1}{2}\int_{t_0}^t\|\omega (s)\|_{L^2}^2ds\leq\|\omega(t_0)\|_{L^2}^2+K^2\eps^2\int_{t_0}^t\|\omega(s)\|^2_{L^2}\frac{ds}{\sqrt{s}}.
  \enq 
  We now use Gronwall's Lemma, which yields
  \beq
  \|\omega(t)\|_{L^2}^2+\frac{1}{2}\int_{t_0}^t\|\omega (s)\|_{L^2}^2ds\leq \|\omega(t_0)\|_{L^2}^2\Big(\frac{t}{t_0}\Big)^{K^2\eps^2}.
  \enq

Now by Sobolev embedding and interpolation we have
\beq
\int_{t_0}^t\|\omega(s)\|^4_{\dot{B}^{s_p}_{p,p}}ds\lesssim\int_{t_0}^t\|\omega(s)\|^4_{\dot{H}^{\frac{1}{2}}}ds\leq \int_{t_0}^t\|\omega(s)\|^2_{L^2}\|\nabla \omega(s)\|^2_{L^2}ds,
\enq 
which by the above estimate yields 
\beq
(t-t_0)\inf_{s\in[0,t]}\|\omega(s)\|^4_{\dot{B}^{s_p}_{p,p}}\lesssim \|\omega(t_0)\|_{L^2}^4\Big(\frac{t}{t_0}\Big)^{2K^2\eps^2}.
\enq 
Hence we obtain
\beq
\inf_{s\in[0,t]}\|\omega(s)\|_{\dot{B}^{s_p}_{p,p}}\lesssim \|\omega(t_0)\|_{L^2}\Big(\frac{t}{t_0}\Big)^{K^2\eps^2/2}(t-t_0)^{-\frac{1}{4}}.
\enq 
In particular we can write, for all $t\geq t_0+1$,
\beq
\inf_{s\in[0,t]}\|\omega(s)\|_{\dot{B}^{s_p}_{p,p}}\lesssim \|\omega(t_0)\|_{L^2}t^{\frac{K^2\eps^2}{2}-\frac{1}{4}}
\enq 
which can be made arbitrarily small for $\eps(p)\frac{1}{2K}$ and $t$ large enough. It follows that one can find a time $\tau_0$ such that 
\beq
\|v(\tau_0)\|_{\dot{B}^{s_p}_{p,p}}\leq \eta(p).
\enq 
By Theorem \ref{NSexistence}, we have $v\in \mathbb{L}^{r_0:\infty}_p(\infty)$.

   Theorem \ref{long-time-u} is proved.
\end{proof}

\subsection{Stability of global solutions}
We are now in a position to show the stability of an a priori global solution constructed in Theorem \ref{NSexistence}:
     let us prove Theorem \ref{stability-u}.
     \begin{proof}
	Suppose that a divergence free vector field $u_0\in\dot{B}^{s_p}_{p,p}$ generating a global solution $u_f\in \mathbb{L}^{r_0:\infty}_p[T<\infty]+C_w(\RR_+,L^{3,\infty})$ with $r_0=\frac{2p}{p-1}$ such that $v:=u_f-U_f\in \mathbb{L}^{r_0:\infty}_p[T<\infty]$, where $U_f:=NSf(0)$. According to Theorem \ref{long-time-u}, we obtain that actually
	\beq
	v\in \mathbb{L}^{r_0:\infty}_p(\infty).
	\enq

	Now let $\bar{u}_0\in \dot{B}^{s_p}_{p,p}$ be another divergence free vector field. By Theorem \ref{NSexistence}, there exist a $T^*(\bar{u}_0)$ and a solution $\bar{u}_f\in \mathbb{L}^{r_0:\infty}_p[T<T^*(\bar{u}_0)]+C_w(\RR_+,L^{3,\infty})$ such that $\bar{u}_f-U_f\in  \mathbb{L}^{r_0:\infty}_p[T<T^*(\bar{u}_0)]$. We mention that 
	the life span $T^*(\bar{u}_0)$ is priori finite.	
	
	We denote $w:=\bar{u}_f-u_f$, then it is enough to prove that for $\|w|_{t=0}\|_{\dot{B}^{s_p}_{p,p}}$ small enough $w\in\mathbb{L}^{r_0:\infty}_p(\infty)$.\\
	The function $w$ satisfies the following system:
	\beq
   \left\{
  \begin{array}{ll}
  \p_t w-\La w+w\cdot\nabla w+(v+U_f)\cdot\nabla w+w\cdot\nabla(v+ U_f)=-\nabla \pi,\\
  \nabla\cdot w=0,\\
  w|_{t=0}=w_0.
  \end{array}
\right.
\enq
	We deduce from Proposition 4.1 in \cite{gip} and Lemma \ref{Heat} \& \ref{bilinear-besov} that $w$ satisfies the following estimate:
	\beqq
	\begin{split}
	&\sup_{t\in[\alpha,\beta]}\|w(t)\|_{\dot{B}^{s_p}_{p,p}}+\|w\|_{\mathcal{L}^{r_0}([\alpha,\beta],\dot{B}_{p,p}^{s_p+\frac{2}{r_0}})}\\
	\leq& K\|w(\alpha)\|_{\dot{B}^{s_p}_{p,p}}+K\|w\|^2_{\mathcal{L}^{r_0}([\alpha,\beta],\dot{B}_{p,p}^{s_p+\frac{2}{r_0}})}+K\|v\|_{\mathcal{L}^{r_0}([\alpha,\beta],\dot{B}_{p,p}^{s_p+\frac{2}{r_0}})}\|w\|_{\mathcal{L}^{r_0}([\alpha,\beta],\dot{B}_{p,p}^{s_p+\frac{2}{r_0}})}
	\end{split}
	\enqq 
	for some constant $K>1$ and all times $\alpha,\beta\in[0,T]$.
	Then there exists $N$ real numbers $(T_i)_{1\leq i\leq N}$ such that $T_1=0$ and $T_N=\infty$, satisfying 
	\beqq
	\RR_+=\cup_{i=1}^{N_1}[T_i,T_{i+1}]~\mathrm{and}~\|v\|_{\mathcal{L}^{r_0}([T_i,T_{i+1}],\dot{B}_{p,p}^{s_p+\frac{2}{r_0}})}\leq \frac{1}{4K},~\forall i\in \{1,..N-1\}.
	\enqq 
	Suppose that
	\beqq
	\|w_0\|_{\dot{B}^{s_p}_{p,p}}\leq\frac{1}{8KN(2K)^N}.
	\enqq 
	Then there exists a maximal time $T_0\in\RR_+\cup\{\infty\}$ such that 
	\beqq\label{tilde-v}
	\|w\|_{\mathcal{L}^{r_0}([0,T_0],\dot{B}^{s_p}_{p,p})}\leq \frac{1}{4K}.
	\enqq  
	If $T=\infty$ then the theorem is proved. Suppose now that $T_0<\infty$. Then we can find an integer $k\in\{1,..N_1\}$ such that 
	\beq
	T_k\leq T_0<T_{k+1}.
	\enq 
	Then we have
	\beq
	\|w\|_{\mathcal{L}^{r_0}([T_i,T_{i+1}],\dot{B}_{p,p}^{s_p+\frac{2}{r_0}})}\leq 2K\|w(T_i)\|_{\dot{B}^{s_p}_{p,p}}
	\enq 
	which implies that 
	\beq
	\sup_{t\in[T_i,T_{i+1}]}\|w(t)\|_{\dot{B}^{s_p}_{p,p}}\leq 2K\|w(T_i)\|_{\dot{B}^{s_p}_{p,p}}.
	\enq 
	By induction, we have for all $i\in\{1,...,k-1\}$,
	\beq
	\|w(T_i)\|_{\dot{B}^{s_p}_{p,p}}\leq (2K)^{i-1}\|w_0\|_{\dot{B}^{s_p}_{p,p}}.
	\enq 
	We conclude from the above two results that 
	\beq
	\|w\|_{\mathcal{L}^{r_0}([T_i,T_{i+1}],\dot{B}_{p,p}^{s_p+\frac{2}{r_0}})}\leq (2K)^{i}\|w_0\|_{\dot{B}^{s_p}_{p,p}}
	\enq 
	and
	\beq
	\sup_{t\in [T_i,T_{i+1}]} \|w(t)\|_{\dot{B}^{s_p}_{p,p}}\leq (2K)^{i}\|w_0\|_{\dot{B}^{s_p}_{p,p}}.
	\enq
	for all $i\leq k-1$. The same arguments  as above also apply on the interval $[T_k,T_0]$ and yield
	\beq
	\|w\|_{\mathcal{L}^{r_0}([T_k,T_{0}],\dot{B}_{p,p}^{s_p+\frac{2}{r_0}})}\leq (2K)^{k}\|w_0\|_{\dot{B}^{s_p}_{p,p}}
		\enq 
	and
	\beq
	\sup_{t\in [T_k,T_{0}]} \|w(t)\|_{\dot{B}^{s_p}_{p,p}}\leq (2K)^{k}\|w_0\|_{\dot{B}^{s_p}_{p,p}}.
	\enq
	On the other hand,
	\beq
	\|w\|_{\mathcal{L}^{r_0}([0,T_{0}],\dot{B}_{p,p}^{s_p+\frac{2}{r_0}})}&\leq& \sum_{i=1}^{k-1}\|w\|_{\mathcal{L}^{r_0}([T_i,T_{i+1}],\dot{B}_{p,p}^{s_p+\frac{2}{r_0}})}+\|w\|_{\mathcal{L}^{r_0}([T_k,T_{0}],\dot{B}_{p,p}^{s_p+\frac{2}{r_0}})}\\
	&\leq&N(2K)^N\|w_0\|_{\dot{B}^{s_p}_{p,p}}<\frac{1}{4K}.
	\enq 
	Under assumption \reff{tilde-v} this contracdicts the maximality of $T_0$. Then the theorem is proved.

\end{proof}

\section{Regularity via iteration}
Consider the following equation,
\beqq\label{perturbation}
v(t,x)=e^{t\La }v_0+B(v,v)+B(w,v)+B(\bar{w},v),
\enqq 
where $B$ is defined in \reff{Buv} . 
This section is devoted to showing the regularity of the solution to \reff{perturbation} by using an iteration method introduced in \cite{gip, gkp} and we adopt a similar notation in \cite{gkp}.

\begin{theorem}[Regularity]\label{Regu}
    Let $p>3$ and $2<r_0<\frac{2p}{p-3}$. And let $w\in L^{\infty}(\RR_+, L^{3,\infty})$ and $\bar{w}\in\mathbb{L}^{r_0:\infty}_p(\infty)$. Suppose that $v\in\mathbb{L}^{r_0:\infty}_p(T)$  for some $T>0$  satisfies \reff{perturbation}with initial data $v_0\in \dot{B}^{s_p}_{p,p}$.

     Then for any integer $N\geq 2$ such that $3(N-1)<p$, there are $H_N\in\mathbb{L}^{1;\infty}_p(\infty)$, $W_N\in \mathbb{L}^{r_0:\infty}_{\bar{p},p}$ for some $2<\bar{p}<3$ and $Z_N\in\mathbb{L}^{r_N;\infty}_{p_N}(T)$ with  $p_N:=\frac{p}{N}$ and $r_N=\max\{1,\frac{r_0}{N}\}$ such that,
    \beqq\label{regularity}
    v=H_N+W_N+Z_N.
    \enqq 
    In particular, by taking $N_0:=\max\{N\in \mathbb{N}:N\geq 2, 3(N-1)<p\}$, we obtain that
    $v$ can be written as
    \beq
    v=v^H+v^S,
    \enq 
    where $v^H:=H_{N_0}\in \mathbb{L}^{1:\infty}_p(\infty)$ and $v^S:=W_{N_0}+Z_{N_0}\in\mathbb{L}^{r_0:\infty}_{\tilde{p},p} (T)\hookrightarrow L^{\infty}([0,T],L^{3,\infty})$ with $\tilde{p}:=\max\{\bar{p},p_{N_0}\}$.
    
\end{theorem}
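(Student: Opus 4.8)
The plan is to prove \eqref{regularity} by induction on $N$, adapting the paraproduct iteration of \cite{gip,gkp}; the new feature here, and the source of the extra bookkeeping, is the drift field $\bar w$. I would first record the preliminaries. With $v_L:=e^{t\La}v_0$, the heat estimates of Lemma \ref{Heat} together with $v_0\in\dot B^{s_p}_{p,p}$ give $v_L\in\mathcal L^{\rho}(\RR_+;\dot B^{s_p+2/\rho}_{p,p})$ for every $\rho\in[1,\infty]$, i.e.\ $v_L\in\mathbb L^{1:\infty}_p(\infty)$; moreover $\dot B^{s_p}_{p,p}\hookrightarrow\dot B^{s_q}_{q,p}$ for every $q\ge p$, so $v_L$ (and likewise $\bar w$) also lies in $\mathbb L^{\rho:\infty}_{q,p}$ for all such $q$. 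Two tools are used throughout: the critical bilinear estimate of Proposition \ref{bilinear-besov}, which sends $\mathbb L^{a_1:\infty}_{q_1}\times\mathbb L^{a_2:\infty}_{q_2}$ into $\mathbb L^{a:\infty}_{q}$ with $\frac1q=\frac1{q_1}+\frac1{q_2}$ and $\frac1a\le\frac1{a_1}+\frac1{a_2}$ --- so each critical factor of Lebesgue weight $q$ costs $1/q$ in the output's Lebesgue conjugate exponent --- and the mixed estimate of Proposition \ref{bilinear-besov} for $B(w,\cdot)$ with $w\in L^\infty(\RR_+;L^{3,\infty})$, which keeps the time exponent and shifts the Lebesgue index through $\frac1{\bar q}=\frac13+\frac1q$. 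All spaces being scaling critical, I would also use $\mathbb L^{a:\infty}_{q_1}(T)\hookrightarrow\mathbb L^{a':\infty}_{q_2}(T)$ for $q_1\le q_2$, $a\le a'$, to normalise blocks to the exponents in \eqref{regularity}. Finally, $r_0<\frac{2p}{p-3}$ is exactly equivalent to $\frac3p+\frac2{r_0}>1$, which is precisely the sum-of-regularities condition needed to apply the critical bilinear estimate at every step, and $3(N-1)<p$ is exactly what keeps $p_k=p/k>\frac32$ --- hence $\dot B^{s_{p_k}}_{p_k,p}$ a genuine space with $s_{p_k}>0$ --- for all $k\le N$.

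For $N=2$, \eqref{perturbation} reads $v=v_L+B(v,v)+B(\bar w,v)+B(w,v)$; I would set $H_2:=v_L$, $Z_2:=B(v,v)+B(\bar w,v)$ and $W_2:=B(w,v)$. Since $v,\bar w\in\mathbb L^{r_0:\infty}_p$ with $r_0>2$, the critical estimate gives $Z_2\in\mathbb L^{r_0/2:\infty}_{p/2}(T)=\mathbb L^{r_2:\infty}_{p_2}(T)$, and the mixed estimate gives $W_2\in\mathbb L^{r_0:\infty}_{\bar p,p}(T)$ for a suitable $\bar p\in(2,3)$. For the inductive step, assuming $v=H_N+W_N+Z_N$, I would substitute this into the right-hand side of \eqref{perturbation}, expand the three bilinear terms into a finite sum of terms $B(X,Y)$ and $B(w,X)$ with $X,Y\in\{v_L,\bar w,H_N,W_N,Z_N\}$, and distribute them into three groups: (i) the purely heat-driven terms $v_L$ and $B(H_N,H_N)$ make up $H_{N+1}$, which stays in $\mathbb L^{1:\infty}_p(\infty)$ because $B(H_N,H_N)\in\mathbb L^{1:\infty}_{p/2}\hookrightarrow\mathbb L^{1:\infty}_p$; (ii) the terms carrying a factor among $w$, $W_N$ or $\bar w$ but no factor $Z_N$ make up $W_{N+1}\in\mathbb L^{r_0:\infty}_{\bar p,p}(T)$, each such factor lowering the Lebesgue index (by $1/3$, $1/\bar p$, $1/p$ respectively) and preserving time exponent $\ge r_0$, so that the worst combination still embeds into $\mathbb L^{r_0:\infty}_{\bar p,p}$; (iii) all remaining terms, namely those with at least one factor $Z_N$, make up $Z_{N+1}\in\mathbb L^{r_{N+1}:\infty}_{p_{N+1}}(T)$: a factor $Z_N$ has Lebesgue conjugate exponent $N/p$ and time exponent $r_N$, and pairing it with anything raises the conjugate exponent to at least $(N+1)/p$ (using $\frac13,\frac1{\bar p},\frac1p\ge\frac1p$) while the heat gain brings the time exponent down to $\le r_{N+1}$.

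Taking $N=N_0$ with $3(N_0-1)<p<3N_0$ then gives $v=v^H+v^S$ with $v^H=H_{N_0}\in\mathbb L^{1:\infty}_p(\infty)$ and $v^S=W_{N_0}+Z_{N_0}$; since $p_{N_0}=p/N_0\in(3-3/N_0,3)\subset(\frac32,3)$ one has $s_{p_{N_0}}>0$ and $r_{N_0}\le r_0$, so with $\tilde p:=\max\{\bar p,p_{N_0}\}<3$ the scaling embeddings put $W_{N_0}$ and $Z_{N_0}$ in $\mathbb L^{r_0:\infty}_{\tilde p,p}(T)$, and then $\dot B^{s_{\tilde p}}_{\tilde p,p}\hookrightarrow L^{3,\infty}$ (Lemma \ref{embedding}, valid since $\tilde p<3$) yields $v^S\in L^\infty([0,T];L^{3,\infty})$. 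The part I expect to be the main obstacle is keeping $H_{N+1}$ inside the sharp space $\mathbb L^{1:\infty}_p(\infty)$: this forces one to split the time integrability between the two factors of each heat-driven product so that, after the heat-kernel smoothing, the output is still in $\mathcal L^1_t(\RR_+)$, and to treat the product estimates that fall exactly on the borderline regularity $3/p$ --- where $\dot B^{3/p}_{p,p}$ fails to be an algebra --- by working slightly off $3/p$ and passing to the limit. A second delicate point, absent from \cite{gip,gkp}, is the drift cross-terms $B(\bar w,\cdot)$: they only inherit the time exponent $r_0$ of $\bar w$ and their Lebesgue index does not improve past $p/2$, so they must be routed carefully --- using that $\bar w$, like $v_L$, is critical at every Lebesgue exponent $\ge p$ --- into $W_{N+1}$ (or into $Z_{N+1}$ when they also carry a factor $Z_N$) rather than into $H_{N+1}$. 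Everything else --- checking that each of the finitely many terms produced by the expansion lands in the space claimed and that the induction hypotheses on $W_N$ and $Z_N$ are precisely what is needed --- is routine but lengthy.
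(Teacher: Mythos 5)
Your overall scheme is the one the paper uses: set $v_L:=e^{t\La}v_0$, take $H_2=v_L$, $W_2=B(w,v)$, $Z_2=B(v,v)+B(\bar{w},v)$, and iterate the Duhamel expansion, regrouping the resulting multilinear terms into a heat-flow part $H$, a rough-drift part $W$ below the $L^3$ scaling line, and a gaining part $Z$. The $N=2$ step, the treatment of $Z_{N+1}$, and the handling of the terms carrying the rough drift $w$ all match. However, there is a genuine gap in your routing of the smooth-drift cross-terms, and it occurs exactly at the point you single out as ``delicate''. You place every term carrying a factor $\bar{w}$ but no factor $Z_N$ --- in particular $B(\bar{w},v_L)$ and, more generally, $B(\bar{w},H_N)$ --- into $W_{N+1}$, which you require to lie in $\mathbb{L}^{r_0:\infty}_{\bar{p},p}$ for some $\bar{p}\in(2,3)$. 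But $\bar{w}$ only carries Lebesgue weight $1/p$: the product law puts $B(\bar{w},v_L)$ in critical spaces of Lebesgue exponent $p/2$ and no lower, and for $p\ge6$ one has $p/2\ge3$, so this term cannot be pushed into any critical Besov space $\dot{B}^{s_{\bar p}}_{\bar p,\cdot}$ with $\bar{p}<3$ --- the Bernstein embeddings along the critical line only go from smaller to larger Lebesgue exponent. Nor does it fit into $Z_{N+1}\in\mathbb{L}^{r_{N+1}:\infty}_{p_{N+1}}$, since $p/2>p/(N+1)$. So for $p\ge6$ your $W_{N+1}$ is not in the space the theorem requires, and your concluding remark that these cross-terms ``must be routed \dots into $W_{N+1}$ \dots rather than into $H_{N+1}$'' is precisely backwards.

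The paper resolves this in the opposite direction: the purely $(\bar{w},v_L)$-multilinear terms $B^M_{N,N}(\bar{w}^{\otimes M},v_L^{\otimes(N-M)})$ are absorbed into $H_{N+1}$ (thus $H_3=v_L+B(v_L,v_L)+B(\bar{w},v_L)$, not $v_L+B(v_L,v_L)$). They sit happily there because their Lebesgue exponent $p/N\le p$ embeds upward into $\mathbb{L}^{1:\infty}_p(\infty)$ and the $\mathcal{L}^1$-in-time integrability is inherited from the $v_L$ factors via Lemma \ref{Heat}. The price --- which the paper explicitly flags as one of its two main departures from \cite{gip,gkp} --- is that $H_N$ is then no longer a sum of multilinear operators acting on $e^{t\La}v_0$ alone and in particular need not belong to Kato's spaces. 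Only $W_N$, generated by the genuinely rough drift $w\in L^{\infty}(\RR_+,L^{3,\infty})$ whose mixed product law (third statement of Proposition \ref{bilinear-besov}) forces the Lebesgue exponent below $3$ in a single stroke, lands in $\mathbb{L}^{r_0:\infty}_{\bar{p},p}$ with $\bar{p}<3$. With this re-routing your induction closes; as written, it does not for $p\ge6$.
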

The argument leading to a similar result to the above theorem in the case $w=\bar{w}=0$ can be found in \cite{gip} and \cite{gkp} (in turn inspired by \cite{FP}). The idea of proving Theorem \ref{Regu} is nearly the same as the idea in \cite{gip} and \cite{gkp}. However, since in our case we need to handle two kinds of drift terms, the decomposition via  iteration becomes much more complicated than those results. More precisely, there are two main difference with previous reuslts:
\begin{itemize}
	\item the fact that one of the drift terms $w$ does not have decay in time and cannot be approximated by smooth functions limits the decay in time and the regularity of $W_{N}$. That is no matter how many times we iterate, there is at least one term of $W_N$ only belonging to $\mathbb{L}^{r_0:\infty}_{\bar{p},p}(T)$.
	\item Compared with the previous results in the case when $\bar{w}=0$ (for details, see \cite{gip}), we cannot obtain that $H_N$ belongs to Kato's spaces in general.
     \end{itemize}
In the following, we adapt most of the notations in the proof of Lemma 3.3 in \cite{gkp}.
\begin{proof}
    Let $v\in\mathbb{L}^{r_0:\infty}_p(T)$ for some $T>0$  satisfies \reff{perturbation}. We can write $v$ as
    \beqq\label{v-expansion}
    v=v_L+B(v,v)+B(w,v)+B(\bar{w},v),
    \enqq 
    where
    \beq
    v_L:=e^{t\La }v_0.
    \enq
    This gives the desired expansion when $N=2$:
    We note that
    \beq 
    v=H_2+W_2+Z_2,
    \enq 
    where
    \beq
    H_2=v_L,~~W_2=B(w,v)~~\mathrm{and}~~Z_2=B(v,v)+B(\bar{w},v).
    \enq 
    Lemma \ref{classical-heat} implies that $H_2\in\mathbb{L}^{1:\infty}_p(\infty)$. According to  the second and last statement in Proposition \ref{bilinear-besov}, we have
    \beq
    \|B(v,v)\|_{\mathbb{L}^{\frac{r_0}{2}:\infty}_{\frac{p}{2}}(T)}\lesssim\|v\|^2_{\mathbb{L}^{r_0:\infty}_p(T)}
    \enq 
    and
    \beq
    \|B(\bar{v},v)\|_{\mathbb{L}^{\frac{r_0}{2}:\infty}_{\frac{p}{2}}(T)}\lesssim\|v\|_{\mathbb{L}^{r_0:\infty}_p(T)}\|\bar{w}\|_{\mathbb{L}^{r_0:\infty}_p(\infty)},
    \enq 
    which implies $Z_2\in \mathbb{L}^{\frac{r_0}{2}:\infty}_{\frac{p}{2}}(T)$. \\
    Note that the fact that the bilinear term $B(v,v)$ and linear $B(\bar{w},v)$ allow to pass from an $L^{p}$ to an $L^{\frac{p}{2}}$ integrability is a key feature in this proof.

    We recall the embedding property $L^{3,\infty}\hookrightarrow\dot{B}^{s_q}_{q,\infty}$ for any $q>3$. Combining with the above property with the last statement of Proposition \ref{bilinear-besov} by taking $q=\frac{3p}{p-2}$, we obtain that
    \beq
    \|B(w,v)\|_{\mathbb{L}^{r_0:\infty}_{\frac{6p}{2p+1},p}(T)}\lesssim\|w\|_{L^{\infty}(\RR_+,L^{3,\infty})}\|v\|_{\mathbb{L}^{r_0:\infty}_p(T)}.
    \enq 
    Hence $W_2\in \mathbb{L}^{r_0:\infty}_{\bar{p},p}(T)$ with $\bar{p}=\frac{6p}{2p+1}<3$. Therefore we prove Theorem \ref{Regu} in the case $N=2$.
    
    Next we plug the expansion \reff{v-expansion} in to the term $Z_2(v):=B(v,v)+B(\bar{w},v)$, to find
    \beq
    \begin{aligned}
    	u=&v_L+B(w,v)+B(v,v)+B(\bar{w},v)\\
    	=&v_L+B(w,v)+B(\bar{w},v_L+B(w,v)+B(v,v)+B(\bar{w},v))\\
    	&+B(v_L+B(w,v)+B(v,v)+B(\bar{w},v),v_L+B(w,v)+B(v,v)+B(\bar{w},v))\\
    	=&v_L+B(v_L.v_L)+B(\bar{w},v_L)+B(w,v)+B(\bar{w},B(w,v))+2B(v_L,B(w,v))\\
    	&+2B(B(w,v),B(v,v))+2B(B(w,v),B(\bar{w},v))+B(B(w,v),B(w,v))\\
    	&+2B(v_L,B(v,v))+B(\bar{w},B(v,v))+B(\bar{w},B(\bar{w},v))+2B(v_L,B(v,\bar{w}))\\
    	&+2B(B(v,v),B(\bar{w},v))+B(B(v,v),B(v,v))+B(B(\bar{w},v),B(\bar{w},v)).
    \end{aligned}
    \enq 
    This gives the expansion for $N=3$:
    \beq
    \begin{aligned}
    v=&H_3+W_3+Z_3~~ \mathrm{with}~~H_3=H_2+B(v_L,v_L)+B(\bar{w},v_L),\\
    W_3&=B(w,v)+B(\bar{w},B(w,v))+2B(v_L,B(w,v))\\
    	&+2B(B(w,v),B(v,v))+2B(B(w,v),B(\bar{w},v))+B(B(w,v),B(w,v))
     \end{aligned}
    \enq 
    and
    \beq
    \begin{aligned}
    Z_3=&2B(v_L,B(v,v))+B(\bar{w},B(v,v))+B(\bar{w},B(\bar{w},v))+2B(v_L,B(v,\bar{w}))\\
    	&+2B(B(v,v),B(\bar{w},v))+B(B(v,v),B(v,v))+B(B(\bar{w},v),B(\bar{w},v)).
    \end{aligned}
    \enq 
    The first statement of Proposition \ref{bilinear-besov} implies that $H_3\in\mathbb{L}^{1:\infty}_p(\infty)$ and the expected bounds of $Z_3$ follow again from product laws as soon as $\frac{p}{2}>3$. Now we need to check that $W_3\in \mathbb{L}^{r_0:\infty}_{\bar{p},p}(T)$. According to the previous arguments, we have $B(w,v)\in \mathbb{L}^{r_0:\infty}_{\bar{p},p}(T)$. Hence we obtain that 
    \beq
    B(w,v)\in L^{\infty}([0,T],\dot{B}^{s_q}_{q,\infty}),\forall q>3,
    \enq 
    provided that $\bar{p}<3$. Again by the last statement of Proposition \ref{bilinear-besov} and taking $q=\frac{3p}{p-2}$, we have the rest of terms in $Z_3$ belong to $\mathbb{L}^{r_0:\infty}_{\bar{p},p}(T)$,
    which implies that $W_3\in \mathbb{L}^{r_0:\infty}_{\bar{p},p}(T)$.
     
    Iterating further, the formulas immediately get very long and complicated, so let us argue by induction:
    
    Assume that for any $2\leq N\leq N_0$, there is an integer $K_N\geq0$, and for any $0\leq k\leq K_N$ some $(N+k)$-linear operators $B^M_{N+k,N}$(the parameter $M\in\{1,\ldots,N+k\}$ measures the number of entries in which $v$ and $\bar{w}$, rather than $v_L$, appears and the second parameter in the subscript denotes that the operators are generated in $N$th step) such that
    \beq
    v=H_N+W_N+Z_N
    \enq 
    with for any $N\geq 3$
    \beqq\label{HN}
    H_N=H_{N-1}+\sum_{M=0}^{N-2}B^M_{N-1,N-1}(\bar{w}^{\otimes M},v_L^{\otimes(N-1-M)}),
    \enqq
    $Z_N$ may be written as the form
    \beqq
    \begin{split}
    Z_N=&\sum_{M=1}^{N}\sum_{\mbox{\tiny$\begin{array}{c}
J+L=M,\\
J\geq1\end{array}$}}B^M_{N,N}(v^{\otimes J},\bar{w}^{\otimes L},v_L^{\otimes(N-M)})\\
    &+\sum_{k=1}^{K_N}\sum_{M=0}^{N+k}\sum_{J+L=M}B^M_{N+k,N}(v^{\otimes J},\bar{w}^{\otimes L},v_L^{\otimes(N+k-M)}),
    \end{split}
    \enqq 
    and 
    \beqq\label{WN}
    \begin{split}
    W_N=\sum_{M=1}^{N-1}&\sum_{J+L=M}\sum_{l=1}^{J-1}\sum_{i+j=l}\sum_{i+j+m=J}B^M_{{N-1},{N-1}}(B(v,v)^{\otimes i},B(w,v)^{\otimes m},\\
    &B(\bar{w},v)^{\otimes j},\bar{w}^{\otimes L},v_L^{\otimes(N-1-L-l-m)})\\
&+W_{N-1}.
\end{split}
    \enqq
    we have used the following convention: for any $J+L=M$
    \beq
    B^M_{N+k,N}(\underbrace{u,\cdots,u}_{J~\mathrm{terms}},\underbrace{v,\cdots,v}_{L~\mathrm{terms}},\underbrace{w,\cdots,w}_{N-M~\mathrm{terms}}):=B^M_{N+k,N}(u^{\otimes J},v^{\otimes L},w^{\otimes(N-M)})
    \enq 
    
    Now let us prove that for any $2\leq N\leq N_0$
    \beqq\label{N,N+1}
    \begin{split}
        	Z_N=&\sum_{M=1}^{N}\sum_{J+L=M}\sum_{l=0}^{J-1}\sum_{i+j=l}\sum_{i+j+m=J}B^M_{{N},{N}}(B(v,v)^{\otimes i},B(w,v)^{\otimes m},\\
    &B(\bar{w},v)^{\otimes j},\bar{w}^{\otimes L},v_L^{\otimes(N-L-l-m)})\\
&+\sum_{M=0}^{N-1}B^M_{N,N}(\bar{w}^{\otimes M},v_L^{\otimes(N-M)})+
Z_{N+1}
    \end{split}
    \enqq
    where $Z_{N+1}$ can be written in the following way: there exists an integer $K_{N+1}\geq0$ for for all $0\leq k\leq K_{N+1}$ and $0\leq M\leq N+1+k$, some $N+1+k$-linear operators $B^{M}_{N+1+k,N+1}$, such that 
      \beqq\label{N+1}
    \begin{split}
    Z_{N+1}=&\sum_{M=1}^{N+1}\sum_{\mbox{\tiny$\begin{array}{c}
J+L=M,\\
J\geq1\end{array}$}}B^M_{N+1,N+1}(v^{\otimes J},\bar{w}^{\otimes L},v_L^{\otimes(N+1-M)})\\
    &+\sum_{k=1}^{K_{N+1}}\sum_{M=0}^{N+k+1}\sum_{J+L=M}B^M_{N+k+1,N+1}(v^{\otimes J},\bar{w}^{\otimes L},v_L^{N+1+k-M}).
    \end{split}
    \enqq  
    
    In order to prove \reff{N,N+1} and \reff{N+1} we just need to use \reff{v-expansion} again: replacing $v$ by $v_L+B(v,v)+B(w,v)+B(\bar{w},v)$ in the argument of $B^M_{N,N}$ gives
    \beq
    \begin{aligned}
    B^M_{N,N}&(v^{\otimes J},\bar{w}^{\otimes L},v_L^{\otimes(N-M)})\\
    =&B^M_{N,N}((v_L+B(w,v)+B(v,v)+B(\bar{w},v))^{\otimes J},\bar{w}^{\otimes L},v_L^{\otimes(N-M)})\\
    =&B^M_{N,N}(v_L^{\otimes J},\bar{w}^{\otimes L},v_L^{\otimes(N-M)})\\
    &+\sum_{l=0}^{J-1}\sum_{i+j=l}\sum_{i+j+m=J}B^M_{{N},{N}}(B(v,v)^{\otimes i},B(w,v)^{\otimes m},B(\bar{w},v)^{\otimes j},\bar{w}^{\otimes L},v_L^{\otimes(N-L-l-m)})\\
    &+\sum_{l=1}^{J}\sum_{i+j=l}\tilde{B}^M_{{N+l},{N}}(v^{\otimes (2i+j)},\bar{w}^{\otimes (L+j)},v_L^{\otimes(N-L-l)}),
    \end{aligned}
    \enq 
    where $\tilde{B}^M_{{N+l},{N}}$ are some $N+l$-linear operators. Therefore we have 
    \beq
    \begin{aligned}
    Z_{N+1}=&\sum_{k=1}^{K_{N}}\sum_{M=0}^{N+k}\sum_{J+L=M}B^M_{N+k,N}(v^{\otimes J},\bar{w}^{\otimes L},v_L^{\otimes(N+k-M)})\\
    &+\sum_{M=1}^{N}\sum_{J+L=M}\sum_{l=1}^{J}\sum_{i+j=l}\tilde{B}^M_{{N+l},{N}}(v^{\otimes (2i+j)},\bar{w}^{\otimes (L+j)},v_L^{\otimes(N-L-l)})
    \end{aligned}
    \enq 
    after reordering, this proves \reff{N,N+1} and \reff{N+1}. Moreover \reff{N,N+1} and \reff{N+1} imply that \reff{HN} and \reff{WN} hold for the case that $N=N_0+1$. 
    
    To conclude the proof the theorem it remains to prove that $H_N\in\mathbb{L}^{1;\infty}_p(\infty)$, $W_N\in \mathbb{L}^{r_0:\infty}_{\bar{p},p}$ for some $2<\bar{p}<3$ and $Z_N\in\mathbb{L}^{r_N;\infty}_{p_N}(T)$ with  $p_N:=\frac{p}{N}$ and $r_N=\max\{1,\frac{r_0}{N}\}$. In fact, the above results again follow from estimates about the heat flow (see Lemma \ref{classical-heat}) and product laws in Proposition \ref{bilinear-besov}, which are based on a similar argument of the cases that $N=2,N=3$.  
       
   Now we take $N_0:=\max\{N\in \mathbb{N}:N\geq 2, 3(N-1)<p\}$. It is obvious that $p_{N_0}=\frac{p}{N_0}<3$, which implies that 
   \beq
   v^S=W_{N_0}+Z_{N_0}\in \mathbb{L}^{r_0:\infty}_{\tilde{p},p}(T)\hookrightarrow L^{\infty}([0,T],L^{3,\infty})
   \enq  
   provided Lemma \ref{embedding}.
   
   Theorem \ref{Regu} is proved.

\end{proof}

\section{Appendix}

\subsection{Estimates on the heat equation}
For the completeness of our proof, we give standard estimates for the heat kernel in Besov space. A similar result can be found in \cite{C}. We first recall the long-time behavior of heat flow. We mention that the following lemmas only focus on critical Besov spaces.
\begin{lemma}\label{classical-heat}
	Let $p,q\in [1,\infty)$ and $g\in\dot{B}^{s_p}_{p,q}$. Then we have that
	\beq
	e^{t\La }g\in\mathbb{L}^{1:\infty}_{p,q}(\infty)
	\enq 
	and
	\beq
	\lim_{t\to\infty}\|e^{t\La }g\|_{\dot{B}^{s_p}_{p,q}}=0.
	\enq 
\end{lemma}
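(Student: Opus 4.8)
The plan is to reduce everything to the single frequency-localized smoothing estimate for the heat semigroup,
\beq
\|\La_j e^{t\La}g\|_{L^p}\leq Ce^{-c2^{2j}t}\|\La_j g\|_{L^p},\qquad j\in\Z,\ t>0,
\enq
valid for universal constants $c,C>0$. I would either prove it directly — writing $\La_j e^{t\La}g=K_{j,t}*\La_j g$ where $\widehat{K_{j,t}}(\xi)=e^{-t|\xi|^2}\psi(2^{-j}\xi)$ with $\psi$ a fixed bump vanishing near the origin, factoring out $e^{-ct2^{2j}}$, rescaling $\xi=2^j\eta$, and observing that the remaining symbol together with its derivatives is bounded uniformly in $t$ so that $\|K_{j,t}\|_{L^1}\lesssim e^{-c2^{2j}t}$, and then applying Young's inequality — or simply invoke the version of this estimate recorded in \cite{C}.

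Granting this, the $\mathcal L^\infty$ component of $\mathbb{L}^{1:\infty}_{p,q}(\infty)$ is immediate from $\sup_{t>0}e^{-c2^{2j}t}\le 1$, which gives $\|e^{t\La}g\|_{\mathcal L^\infty(\RR_+;\dot B^{s_p}_{p,q})}\le C\|g\|_{\dot B^{s_p}_{p,q}}$. For the $\mathcal L^1$ component I would integrate the estimate in time, using $\int_0^\infty e^{-c2^{2j}t}\di t=(c2^{2j})^{-1}$, so that $2^{j(s_p+2)}\|\La_j e^{t\La}g\|_{L^1_tL^p_x}\lesssim 2^{js_p}\|\La_j g\|_{L^p}$; taking the $\ell^q_j$ norm then yields $\|e^{t\La}g\|_{\mathcal L^1(\RR_+;\dot B^{s_p+2}_{p,q})}\lesssim\|g\|_{\dot B^{s_p}_{p,q}}$. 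The point is that the two extra derivatives carried by $\dot B^{s_p+2}_{p,q}$ are exactly absorbed by the $t$-integral — this is the only place the parabolic gain is used. I would also check, as a routine bookkeeping step, that since $s_p<\frac3p$ the Littlewood--Paley partial sums of $e^{t\La}g$ converge to it in the sense of Definition \ref{besov}; this is inherited from the corresponding property of $g$ because $e^{t\La}$ commutes with $\La_j$ and is continuous on $\mathcal S'$.

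For the decay as $t\to\infty$, I would fix $\eps>0$ and use the hypothesis $q<\infty$ to pick $N$ with $\big(\sum_{|j|>N}2^{js_pq}\|\La_j g\|_{L^p}^q\big)^{1/q}<\eps$. Splitting $\|e^{t\La}g\|_{\dot B^{s_p}_{p,q}}^q$ into $|j|\le N$ and $|j|>N$, the high-frequency part is $\le C^q\eps^q$ uniformly in $t$ by the estimate above with $e^{-c2^{2j}t}\le 1$, while the low-frequency part is a finite sum each of whose terms carries a factor $e^{-cq2^{2j}t}\to 0$. Hence $\limsup_{t\to\infty}\|e^{t\La}g\|_{\dot B^{s_p}_{p,q}}\le C\eps$, and letting $\eps\downarrow0$ finishes the proof.

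The argument is essentially routine and I do not expect a serious obstacle: the only genuinely substantive input is the frequency-localized heat estimate (equivalently the kernel bound $\|K_{j,t}\|_{L^1}\lesssim e^{-c2^{2j}t}$), which is classical. Note that the hypothesis $q<\infty$ enters only in the decay statement — it genuinely fails for $\dot B^{s_p}_{p,\infty}$, e.g. when $g$ is a single dyadic block — whereas the membership $e^{t\La}g\in\mathbb{L}^{1:\infty}_{p,q}(\infty)$ holds for every $q\in[1,\infty]$.
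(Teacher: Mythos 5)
Your proposal is correct and follows essentially the same route as the paper: both rest on the frequency-localized smoothing estimate $\|\La_j e^{t\La}g\|_{L^p}\lesssim e^{-c2^{2j}t}\|\La_j g\|_{L^p}$, deduce the $\mathcal{L}^1$ and $\mathcal{L}^{\infty}$ bounds by integrating (resp.\ taking the supremum) in time before summing in $\ell^q_j$, and prove the decay by the same high/low frequency splitting using $q<\infty$ for the tail and the exponential factor for the finitely many remaining blocks. The only cosmetic difference is that the paper records the time-integrated estimate for all $r\in[1,\infty]$ at once, while you treat the two endpoints $r=1$ and $r=\infty$ separately, which is all the definition of $\mathbb{L}^{1:\infty}_{p,q}(\infty)$ requires.
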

\begin{proof}
	Let $g\in\dot{B}^{s_p}_{p,p}$. We notice that 
	for any $j\in\mathbb{Z}$,
	\beq
	\|e^{t\La }\La_j g\|_{L^p}\lesssim e^{-t2^{2j}}\|\La_j g\|_{L^p}\lesssim 2^{-js_p}e^{-t2^{2j}}c_{j,q}\|g\|_{\dot{B}^{s_p}_{p,q}},
	\enq 
	where $\|(c_{j,q})_{j\in\mathbb{Z}}\|_{\ell^q}=1$.
    Then for any $r\in[1,\infty]$, we have
    \beq
    \|e^{t\La }\La_j g\|_{L^r(\RR_+,L^p_x)}\lesssim 2^{-js_p}2^{-\frac{2j}{r}}c_{j,q}\|g\|_{\dot{B}^{s_p}_{p,q}},
    \enq 
    which  implies that 
    \beq
    \big\|\big(2^{j(s_p+\frac{2}{r})}\|e^{t\La }\La_j g\|_{L^r(\RR_+,L^p_x)}\big)_{j\in\mathbb{Z}}\big\|_{\ell^q}\lesssim\|g\|_{\dot{B}^{s_p}_{p,q}}
    \enq 
    Hence we have $e^{t\La }g\in\mathbb{L}^{1:\infty}_{p,q}(\infty)$.

	Moreover for any $\eps>0$, one can choose an integer $N$ such that for any $t\geq 0$
	\beq
	\Big(\sum_{|j|>N}2^{qjs_p}\|e^{t\La}\La_j g\|^{q}_{L^p}\Big)^{\frac{1}{q}}<\frac{\eps}{2}.
	\enq 
	Also we have
	\beq
	\sum_{|j|\leq N}2^{qjs_p}\|e^{t\La}\La_j g\|^{q}_{L^p}\lesssim 2^{-jqs_p}e^{-qt2^{-2N}}c^q_{j,q}\|g\|^q_{\dot{B}^{s_p}_{p,q}},
	\enq 
	hence for the fixed $N$, there exists a $T(\eps)>0$ such that for any $t>T$,
	\beq
	\Big(\sum_{|j|\leq N}2^{qjs_p}\|e^{t\La}\La_j g\|^{q}_{L^p}\Big)^{\frac{1}{q}}<\frac{\eps}{2}.
	\enq 
	Therefore we have that for any $\eps>0$, there exists a $T(\eps)>0$, such that for any $t>T$
	\beq
	\|e^{t\La}g \|_{\dot{B}^{s_p}_{p,q}}<\eps.
	\enq 
	The lemma is proved.
\end{proof}
\begin{lemma}\label{Heat}
	Let $p\in[1,\infty]$ and $r\in[1,\infty]$. Suppose that $f$ is a function belonging to $\mathcal{L}^r_T(\dot{B}^{s_p+\frac{2}{r}-2}_{p,p})$. We denote that, for any $t\in[0,T]$
	\beqq\label{h(f)}
	H(f):=\int_0^t e^{(t-s)\La}f(s,\cdot)ds.
	\enqq
	Then we have $H(f)\in \mathcal{L}^{\bar{r}}_T(\dot{B}^{s_p+\frac{2}{\bar{r}}}_{p,p})$ for any $\bar{r}\geq r$, and	
	\beq
	\|H(f)\|_{\mathcal{L}^{\bar{r}}_T(\dot{B}^{s_p+\frac{2}{\bar{r}}}_{p,p})}\lesssim\|f\|_{\mathcal{L}^r_T(\dot{B}^{s_p+\frac{2}{r}-2}_{p,p})}.
	\enq 
    Moreover, if $r<\infty$,
    \beq
	\lim_{t\to\infty}\|H(f)\|_{\dot{B}^{s_p}_{p,p}}=0.
	\enq
\end{lemma}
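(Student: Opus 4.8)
The plan is to reduce the whole statement to a Littlewood--Paley block estimate for the heat semigroup combined with Young's inequality in the time variable. First I would fix $j\in\ZZ$ and use the standard heat-smoothing bound on dyadic blocks: since $\widehat{\La_j f(s)}$ is supported in an annulus of size $\sim 2^j$, one has $\|e^{(t-s)\La}\La_j f(s)\|_{L^p}\lesssim e^{-c(t-s)2^{2j}}\|\La_j f(s)\|_{L^p}$ for $t\ge s$, with $c>0$ an absolute constant. Applying $\La_j$ under the integral sign in the definition of $H(f)$ and taking $L^p_x$ norms gives, for $t\in[0,T]$,
\[
\|\La_j H(f)(t)\|_{L^p}\lesssim\int_0^t e^{-c(t-s)2^{2j}}\|\La_j f(s)\|_{L^p}\,ds=(k_j*g_j)(t),
\]
where $k_j(\tau):=e^{-c\tau 2^{2j}}\mathbf{1}_{\{\tau\ge0\}}$ and $g_j(s):=\|\La_j f(s)\|_{L^p}\mathbf{1}_{[0,T]}(s)$.

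Second, I would apply Young's convolution inequality in $t$ with exponents satisfying $\frac1a+\frac1r=1+\frac1{\bar r}$; this is admissible because $\bar r\ge r$ forces $a\ge1$. Since $\|k_j\|_{L^a(\RR_+)}=(ca\,2^{2j})^{-1/a}\sim 2^{-2j(1+1/\bar r-1/r)}$, this yields
\[
\|\La_j H(f)\|_{L^{\bar r}([0,T];L^p_x)}\lesssim 2^{-2j(1+1/\bar r-1/r)}\,\|\La_j f\|_{L^r([0,T];L^p_x)}.
\]
Multiplying by the weight $2^{j(s_p+2/\bar r)}$, the powers of $2^j$ collapse to $2^{j(s_p+2/r-2)}$, and taking the $\ell^p_j$-norm on both sides produces exactly the claimed bound $\|H(f)\|_{\mathcal L^{\bar r}_T(\dot B^{s_p+2/\bar r}_{p,p})}\lesssim\|f\|_{\mathcal L^r_T(\dot B^{s_p+2/r-2}_{p,p})}$.

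Third, for the decay statement (here $T=\infty$ and $r<\infty$) I would argue as in the proof of Lemma \ref{classical-heat}, separating high and low frequencies. Set $c_j:=2^{j(s_p+2/r-2)}\|\La_j f\|_{L^r(\RR_+;L^p)}$, so that $(c_j)_j\in\ell^p$. Taking $\bar r=\infty$ in the estimate above gives the uniform-in-time bound $2^{js_p}\sup_{t\ge0}\|\La_j H(f)(t)\|_{L^p}\lesssim c_j$; hence for any $\eps>0$ there is $N$ with $\sum_{|j|>N}2^{jps_p}\|\La_j H(f)(t)\|_{L^p}^p<\eps^p/2$ for all $t$. For each of the finitely many $j$ with $|j|\le N$ I would split $\int_0^t=\int_0^{t/2}+\int_{t/2}^t$ in the convolution bound: on $[0,t/2]$ the kernel is bounded by $e^{-ct2^{2j}/2}$, which dominates the polynomial factor $t^{1-1/r}$ arising from H\"older applied to $\|\La_j f\|_{L^r(0,t/2;L^p)}$, so this piece tends to $0$; on $[t/2,t]$, H\"older in time bounds the integral by $\|k_j\|_{L^{r'}(\RR_+)}\|\La_j f\|_{L^r(t/2,\infty;L^p)}$, and the last factor vanishes as $t\to\infty$ since $\|\La_j f\|_{L^r(\RR_+;L^p)}<\infty$. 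Thus $\|\La_j H(f)(t)\|_{L^p}\to0$ for each fixed $j$, the finite sum over $|j|\le N$ goes to $0$, and together with the uniform tail bound this gives $\|H(f)(t)\|_{\dot B^{s_p}_{p,p}}\to0$. I expect the boundedness part to be entirely routine; the only step needing a little care is this last splitting argument, which is however completely parallel to Lemma \ref{classical-heat}.
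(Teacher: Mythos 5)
Your proof is correct, and for the main boundedness estimate it is exactly the paper's argument: the heat-smoothing bound $\|e^{(t-s)\La}\La_j f(s)\|_{L^p}\lesssim e^{-c(t-s)2^{2j}}\|\La_j f(s)\|_{L^p}$ on each dyadic block, Young's convolution inequality in time with $\frac{1}{r'}+\frac1r=1+\frac{1}{\bar r}$ (your $a$ is the paper's $r'$), the kernel norm $\sim 2^{-2j/r'}$, and the collapse of the dyadic weights. For the decay statement the time-splitting $\int_0^{t/2}+\int_{t/2}^t$ and the treatment of the $[t/2,t]$ piece (vanishing tail of $\|\La_j f\|_{L^r}$, which is where $r<\infty$ enters) also coincide with the paper. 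The only real divergence is the $[0,t/2]$ piece: the paper rewrites it as $e^{\frac{t}{2}\La}H(f)(\tfrac t2)$ and invokes Lemma \ref{classical-heat}, which is slightly delicate since that lemma is stated for a fixed datum while $H(f)(\tfrac t2)$ moves with $t$ (one needs the uniform-in-time $\dot B^{s_p}_{p,p}$ bound to justify a uniform frequency truncation). You instead truncate in frequency first, using the uniform $\ell^p$ control from the $\bar r=\infty$ case, and then beat the polynomial factor $t^{1-1/r}$ by the exponential $e^{-ct2^{2j}/2}$ for each of the finitely many surviving blocks; this is self-contained and avoids the moving-target issue entirely. Both routes are valid and of comparable length.
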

\begin{proof}
	We first notice that 
	\beq
	\begin{aligned}
	\|\La_jH(f)\|_{L^{\bar{r}}_tL^p_x}&\leq \|\int_0^t\|e^{(t-s)\La }\La_j f(s,\cdot)\|_{L^p_x}ds\|_{L^{\bar{r}}_t}\\
	&\lesssim\|\int_0^t e^{-c(t-s)2^{2j}}\|\La_j f(s,\cdot)\|_{L^p_x}ds\|_{L^{\bar{r}}_t}\\
	&\lesssim\|e^{-ct2^{2j}}\|_{L^{r'}_t}\|\La_j f\|_{L^r_tL^p_x},
	\end{aligned}
	\enq 
	where $\frac{1}{\bar{r}}+1=\frac{1}{r}+\frac{1}{r'}$.
	Since $f\in \mathcal{L}^r_T(\dot{B}^{s_p+\frac{2}{r}-2}_{p,p})$ we have 
	\beq
	\|\La_j f\|_{L^r_tL^p_x}\lesssim 2^{-j(s_p+\frac{2}{r}-2)}d_{j,p}\|f\|_{\mathcal{L}^r_T(\dot{B}^{s_p+\frac{2}{r}-2}_{p,p})},
	\enq 
	where $(d_{j,p})\in\ell^p$ and $\|(d_{j,p})\|_{\ell^p}=1$. We also notice that 
	\beq
	\|e^{-ct2^{2j}}\|_{L^{r'}_t}\lesssim 2^{-\frac{2j}{r'}}.
	\enq
	Then we have 
	\beq
	\|\La_jH(f)\|_{L^{\bar{r}}_tL^p_x}&\lesssim&2^{-j(s_p+\frac{2}{r}+\frac{2}{r'}-2)}d_{j,p}\|f\|_{\mathcal{L}^r_T(\dot{B}^{s_p+\frac{2}{r}-2}_{p,p})}\\
	&=&2^{-j(s_p+\frac{2}{\bar{r}})}d_{j,p}\|f\|_{\mathcal{L}^r_T(\dot{B}^{s_p+\frac{2}{r}-2}_{p,p})},
	\enq 
	which implies that 
	\beq
	\Big\|\big(2^{j(s_p+\frac{2}{\bar{r}})}\|\La_jH(f)\|_{L^{\bar{r}}_tL^p_x}\big)_{j\in\mathbb{Z}}\Big\|_{\ell^p}\lesssim\|f\|_{\mathcal{L}^r_T(\dot{B}^{s_p+\frac{2}{r}-2}_{p,p})}.
	\enq 
    Thus we proved that $H(f)\in \mathcal{L}^{\bar{r}}_T(\dot{B}^{s_p+\frac{2}{\bar{r}}}_{p,p})$ for any $\bar{r}\geq r$, and	
	\beq
	\|H(f)\|_{\mathcal{L}^{\bar{r}}_T(\dot{B}^{s_p+\frac{2}{\bar{r}}}_{p,p})}\lesssim\|f\|_{\mathcal{L}^r_T(\dot{B}^{s_p+\frac{2}{r}-2}_{p,p})}.
	\enq 

    Now we suppose that $r<\infty$.\\
	First we decompose $H(f)$ into two parts:
	\beq
	H_1(f):=\int_0^\frac{t}{2}e^{(t-s)\La}f(s,\cdot)ds,
	\enq 
	and
	\beq
	H_2(f):=\int^t_\frac{t}{2}e^{(t-s)\La}f(s,\cdot)ds.
	\enq 
	We notice that $H_1(f)$ can be written as 
	\beq
	H_1(f)=e^{\frac{t\La}{2}}\int_0^{\frac{t}{2}}e^{(\frac{t}{2}-s)\La}f(s,\cdot)ds=e^{\frac{t\La }{2}}H(f)(\frac{t}{2}).
	\enq 
	According the above argument, we have, for any $t>0$, $H(f)(\frac{t}{2})\in\dot{B}^{s_p}_{p,p}$. Applying Lemma \ref{classical-heat}, we have
	\beq
	\lim_{t\to\infty}\|e^{\frac{t\La }{2}}H(f)(\frac{t}{2})\|_{\dot{B}^{s_p}_{p,p}}=0.
	\enq 
	Now we turn to $H_2(f)$, we have
	\beq
	\|\La_j H_2(f)\|_{L^p}&\lesssim&\int_{\frac{t}{2}}^t e^{-2^{2j}(t-s)}\|\La_j f(s)\|_{L^p}ds\\
	&\lesssim&2^{2j(\frac{1}{r}-1)}\|\La_j f\|_{L^r([t/2,\infty);L^p)},
	\enq 
	which implies that 
	\beq
	\|H_2(f)(t)\|_{\dot{B}^{s_p}_{p,p}}\lesssim\|f\|_{\mathcal{L}^r([t/2,\infty);\dot{B}^{s_p+\frac{r}{2}-2}_{p,p})}\to0,~~\mathrm{as}~~t\to\infty.
	\enq 
	Lemma \ref{Heat} is proved.
\end{proof}
\subsection{Product laws in Besov spaces}
In this paragraph we recall the following product laws in Besov spaces, which use the theory of paraproducts. We only elected to state the results we needed previously, but it should be clear that we have not stated all possible estimates in their greatest generality.
\begin{proposition}\label{bilinear-besov}

	\begin{enumerate}
		\item Let $p>3$ and $2<r<\frac{2p}{p-3}$. Then there exists a constant $\gamma>0$ such that for any
		 $v,w\in \mathcal{L}^{r}([0,T],\dot{B}^{s_p+\frac{2}{r}}_{p,p})$, we have  
	\beqq
	&\|vw\|_{\mathcal{L}^{r}([0,T],\dot{B}^{s_p+\frac{2}{r}-1}_{p,p})}\leq \gamma\|v\|_{\mathcal{L}^{r}([0,T],\dot{B}^{s_p+\frac{2}{r}}_{p,p})}\|w\|_{\mathcal{L}^{r}([0,T],\dot{B}^{s_p+\frac{2}{r}}_{p,p})}.
	\enqq 
	\item Let $p_1,p_2\in (3,\infty)$, $2<r<\frac{2p}{p-3}$ and $T\in\RR_+\cup\{\infty\}$. Suppose that $v\in\mathbb{L}^{r;\infty}_{p_1}(T)$ and $w\in\mathbb{L}^{r;\infty}_{p_2}(T)$. Then we have 
	\beq
	\|vw\|_{\mathcal{L}^{\frac{r}{2}}([0,T],\dot{B}_{p,p}^{s_p+\frac{4}{r}-1})}\lesssim\|v\|_{\mathbb{L}^{r;\infty}_{p_1}(T)}\|w\|_{\mathbb{L}^{r;\infty}_{p_2}(T)},
	\enq
	where $\frac{1}{p}=\frac{1}{p_1}+\frac{1}{p_2}$.
     
	\item Let $p>3$.  Suppose that $w\in L^{\infty}([0,T],L^{3,\infty})$ and $v\in\mathcal{L}^{r_0}([0,T];\dot{B}_{p,p}^{s_p+\frac{2}{r_0}})$ for some $T\in\RR_+\cup\{+\infty\}$ with $r_0=\frac{2p}{p-1}$, then we have 
	\beq
	\|vw\|_{\mathcal{L}^{r_0}([0,T],\dot{B}^{s_{\bar{p}}+\frac{2}{r_0}-1}_{\bar{p},p})}\leq C(p)\|w\|_{L^{\infty}([0,T],L^{3,\infty})}\|v\|_{\mathcal{L}^{r_0}([0,T];\dot{B}_{p,p}^{s_p+\frac{2}{r_0}})},
	\enq 
	where $\frac{1}{\bar{p}}=\frac{1}{3}+\frac{1}{6p}$ and $C(p)\to\infty$ as $p\to\infty$.

	\end{enumerate}
\end{proposition}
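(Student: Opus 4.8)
The plan is to prove all three inequalities uniformly by Bony's paraproduct decomposition
\[
vw = T_v w + T_w v + R(v,w),\qquad
T_v w := \sum_{j} S_{j-1}v\,\La_j w,\qquad
R(v,w):=\sum_{|j-k|\le 1}\La_j v\,\La_k w ,
\]
estimating each of the three pieces in the target space by combining Bernstein's inequalities (to trade an $L^{q_1}_x$ norm for an $L^{q_2}_x$ norm with $q_2>q_1$, at the cost of $\frac{3}{q_1}-\frac{3}{q_2}$ derivatives), H\"older in $x$ for the pairing of $L^{q_1}_x$ with $L^{q_2}_x$, and H\"older in $t$ to combine the time exponents of the two factors. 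Because in the norms of Definition \ref{time-besov} the $\ell^p_j$ summation is taken outside the $L^\rho_t L^p_x$ norm, the Chemin--Lerner structure survives all these manipulations, and I would freely use the embeddings $\mathcal L^{\rho_1}\hookrightarrow\mathcal L^{\rho_2}$ (valid for $\rho_1\le q\le\rho_2$) recorded right after that definition. In every case the estimate comes down to summing geometric series in the dyadic index, and the stated constraints on $r$, $p$ (and on $\bar{p}$) are precisely what make those series converge.

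For statement (1), writing $\sigma:=s_p+\frac2r$, the hypothesis $2<r<\frac{2p}{p-3}$ ensures $0<\sigma<\frac3p$. In the paraproduct $T_v w$ I would bound $\|S_{j-1}v\|_{L^\infty_x}$ by $\sum_{k<j}2^{3k/p}\|\La_k v\|_{L^p_x}$, where the exponent $\frac3p-\sigma=1-\frac2r$ controlling the sum is positive exactly because $r>2$; then $T_v w$, and by symmetry $T_w v$, fall into the target. The remainder $R(v,w)$ first lands in $\dot{B}^{2\sigma}_{p/2,p}$ — which requires $2\sigma>0$ — and is then pushed back to an $L^p_x$-based space through $\dot{B}^{s}_{p/2,p}\hookrightarrow\dot{B}^{s-3/p}_{p,p}$, the upper bound $r<\frac{2p}{p-3}$ keeping the resulting regularity index admissible. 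The one delicate bookkeeping point here is the matching of the time exponents, which is ultimately why, in the applications, this estimate is invoked in tandem with the heat/integration estimate of Lemma \ref{Heat}.

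Statement (2) is the same argument with the two factors now living in $\mathbb{L}^{r:\infty}_{p_1}(T)$ and $\mathbb{L}^{r:\infty}_{p_2}(T)$: one applies H\"older in $x$ with $\frac1p=\frac1{p_1}+\frac1{p_2}$ and H\"older in $t$ with exponent $\frac r2$ — which is why the conclusion is stated in $\mathcal L^{r/2}$ — and uses the extra room built into the $\mathbb{L}^{r:\infty}_{p_i}$ scale (these factors also belong to $\mathcal L^\infty_T$ of a lower-regularity Besov space) to keep all the dyadic sums summable.

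For statement (3) the new feature is the rough factor $w\in L^\infty([0,T],L^{3,\infty})$, and the plan is to first absorb the Lorentz integrability into a Besov norm via the embedding $L^{3,\infty}\hookrightarrow\dot{B}^{s_q}_{q,\infty}$, valid for every $q>3$ (the embedding already used in the proof of Theorem \ref{Regu}), and then run the paraproduct/remainder estimates for $w\in\mathcal L^\infty([0,T],\dot{B}^{s_q}_{q,\infty})$ against $v\in\mathcal L^{r_0}([0,T],\dot{B}^{s_p+2/r_0}_{p,p})$, $r_0=\frac{2p}{p-1}$. I would choose $q$ so that $\frac1q+\frac1p=\frac1{\bar{p}}$, that is $q=\frac{6p}{2p-5}$ (and indeed $q>3$ since $p>3$); then the arithmetic identity $s_q+s_p=s_{\bar{p}}-1$ makes the product land, after H\"older in $x$, in $\dot{B}^{s_{\bar{p}}+2/r_0-1}_{\bar{p},p}$, while the time exponent stays $r_0$ because $w$ is $L^\infty$ in time. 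The hard part, I expect, is the low-frequency part of the paraproduct $T_w v$: there one bounds $\|S_{j-1}w\|_{L^q_x}$ by $\sum_{k<j}2^{-ks_q}\|\La_k w\|_{L^q_x}$, and since $-s_q=1-\frac3q=\frac5{2p}$ is small and positive when $p$ is large, this geometric series has ratio close to $1$ and its sum is exactly what produces the constant $C(p)$ with $C(p)\to\infty$ as $p\to\infty$; the whole estimate must be organised so that this is the only place where a non-uniform constant appears. That the symmetric paraproduct $T_v w$ and the remainder $R(v,w)$ land in the same space (the frequency-support bookkeeping in $R$ needing $p>3$), and that $\bar{p}<3$ is consistent with $\dot{B}^{s_{\bar{p}}}_{\bar{p},p}\hookrightarrow L^{3,\infty}$, would then be checked by the same estimates as in \cite{gip,gkp,CK}.
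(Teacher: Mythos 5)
Your overall strategy for statement (3) --- Bony's decomposition combined with the embedding $L^{3,\infty}\hookrightarrow\dot{B}^{s_q}_{q,\infty}$ for $q>3$ (Lemma \ref{embedding}) --- is exactly the paper's, and your treatment of the paraproduct $T_w v$ (with $q=\frac{6p}{2p-5}$ so that $s_q=-\frac{5}{2p}<0$, the low-frequency geometric series of ratio $2^{-5/(2p)}$ producing a constant of size $p$) reproduces the paper's computation for that term. However, your plan to run the whole proof with this \emph{single} H\"older pair $\frac1q+\frac1p=\frac1{\bar p}$ breaks down on the other two pieces. For $T_v w$ the dual exponent of your $q$ relative to $\bar p$ is $p$ itself, so you would have to bound $\|S_{j-1}v\|_{L^{r_0}_tL^p_x}$ by $\sum_{k<j}2^{-k(s_p+\frac{2}{r_0})}c_k$ with $s_p+\frac{2}{r_0}=\frac{2}{p}>0$; this low-frequency sum diverges as $k\to-\infty$ (a homogeneous Besov space of positive index does not control the Lebesgue norm of the partial sums $S_{j-1}v$), so the term cannot be closed. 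For the remainder $R(w,v)$ one needs the total regularity $s_p+s_q+\frac{2}{r_0}$ to be positive, and with your $q$ it equals $\frac{2}{p}-\frac{5}{2p}=-\frac{1}{2p}<0$, so that sum diverges as well. The paper's proof avoids both problems by introducing a \emph{second} H\"older pair: it first lowers the regularity of $v$ via $\dot{B}^{s_p+\frac{2}{r_0}}_{p,p}\hookrightarrow\dot{B}^{s_{p_1}+\frac{2}{r_0}}_{p_1,p}$ with $p_1=4p$, so that $s_{p_1}+\frac{2}{r_0}=-\frac{1}{4p}<0$ and the $S_{j-1}v$ sum converges, and correspondingly measures the high-frequency factor in $L^{q}$ with $q=\frac{12p}{4p-1}$, for which $s_q=-\frac{1}{4p}$ and $s_p+s_q+\frac{2}{r_0}=\frac{7}{4p}>0$ handles the remainder. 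Note also that these two series, with ratios $2^{\mp 1/(4p)}$, are the dominant source of the blow-up $C(p)\to\infty$, not only the $T_w v$ term you single out.

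For statements (1) and (2) the paper gives no proof (it cites \cite{C,gip}), so there is nothing to compare against; I only point out that the ``delicate bookkeeping of time exponents'' you flag in (1) is not resolved by Lemma \ref{Heat}. H\"older in time sends $L^r_t\cdot L^r_t$ into $L^{r/2}_t$, so with only $\mathcal{L}^r$ control on both factors one cannot land in $\mathcal{L}^r_t$; the standard fix is to use that in all applications the factors also lie in $\mathcal{L}^\infty_t\dot{B}^{s_p}_{p,p}$ (they belong to $\mathbb{L}^{r_0:\infty}_p$), and to take one factor of each paraproduct in $L^{r_1}_t$ with $\frac{1}{r_1}+\frac{1}{r_2}=\frac1r$, $r_1,r_2\ge r$, by interpolation within that intersection space. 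You should make that additional hypothesis explicit rather than deferring it to Lemma \ref{Heat}.
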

Since the first two results in the proposition are standard and well-known, which can be found in \cite{C,gip}, we only give the proof of the last of the proposition. 

\begin{proof}
    For simplicity, we treat $w$ and $v$ as functions. We have 
	\beq
	\La_j wv=\La_j T_w v+\La_j T_v w+\La_j R(u,v).
	\enq 
	We first take $q_1$ such that $\frac{1}{\bar{p}}=\frac{1}{p}+\frac{1}{q_1}=\frac{1}{3}+\frac{1}{6p}$ implying that $q_1=\frac{6p}{2p-5}>3$. 
	
		About $\La_j T_w v$, we have
	\beq
	\|\La_j T_w v\|_{L^{r_0}(L^{\bar{p}})}\lesssim\|(S_j w)(\La_j v)\|_{L^{r_0}(L^{\bar{p}})}\lesssim\|S_j w\|_{L^\infty(L^{q_1})}\|v\|_{L^{r_0}(L^p)}.
	\enq 
	And we notice that
	\beq
    \|S_j w\|_{L^\infty(L^{q_1})}\lesssim\sum_{j'\leq j}\|\La_j w\|_{L^\infty(L^{q_1})}\lesssim \sum_{j'\leq j}2^{-j's_{q_1}}c_{j,\infty}\|w\|_{\mathcal{L}^{\infty}([0,T],\dot{B}_{q_1,\infty}^{s_{q_1}})},
	\enq 
	and
	\beq
	\|v\|_{L^{r_0}(L^p)}\lesssim 2^{-j(s_p+\frac{2}{r_0})}c_{j,p'}\|v\|_{\mathcal{L}^{r_0}([0,T];\dot{B}_{p,p}^{s_p+\frac{2}{r_0}})}.
	\enq 
	Since $s_{q_1}<0$, we have
	\beq
	\|2^{j(s_{\bar{p}}+\frac{2}{r_0}-1)}\|\La_j T_w v\|_{L^{r_0}(L^{p'})}\|_{\ell^{p'}}\lesssim \|w\|_{\mathcal{L}^{\infty}([0,T],\dot{B}_{q_1,\infty}^{s_{q_1}})}\|v\|_{\mathcal{L}^{r_0}([0,T];\dot{B}_{p,p}^{s_p+\frac{2}{r_0}})}.
	\enq  
	This combined with Lemma \ref{embedding}, implies that
	\beqq\label{twv}
	\|2^{j(s_{\bar{p}}+\frac{2}{r_0}-1)}\|\La_j T_w v\|_{L^{r_0}(L^{p'})}\|_{\ell^{p'}}\lesssim \|w\|_{L^{\infty}([0,T],L^{3,\infty})}\|v\|_{\mathcal{L}^{r_0}([0,T];\dot{B}_{p,p}^{s_p+\frac{2}{r_0}})}.
	\enqq 
	
	Now we choose  $q:=\frac{12p}{4p-1}$ and $p_1:=4p$. It is easy to check such that $\frac{1}{\bar{p}}=\frac{1}{p_1}+\frac{1}{q}=\frac{1}{3}+\frac{1}{6p}$ . We notice that 
	\beq
	\|\La_j T_v w\|_{L^{r_0}(L^{\bar{p}})}\lesssim\|S_j v\|_{L^{r_0}(L^{p_1})}\|\La_j w\|_{L^{\infty}(L^q)},
	\enq 
	and
	\beq
	\|S_j v\|_{L^{r_0}(L^{p_1})}&\lesssim&\sum_{j'\leq j}\|\La_j v\|_{L^{r_0}(L^{p_1})}\lesssim 2^{-j'(s_{p_1}+\frac{2}{r_0})}c_{j',p}\|v\|_{\mathcal{L}^{r_0}([0,T];\dot{B}_{p_1,p}^{s_{p_1}+\frac{2}{r_0}})}\\
	&\lesssim&2^{-j'(s_{p_1}+\frac{2}{r_0})}c_{j',p}\|v\|_{\mathcal{L}^{r_0}([0,T];\dot{B}_{p,p}^{s_{p}+\frac{2}{r_0}})},
	\enq 
	and 
	\beq
	\|\La_j w\|_{L^{\infty}(L^q)}\lesssim 2^{-js_q}c_{j,\infty}\|w\|_{\mathcal{L}^{\infty}([0,T],\dot{B}_{q,\infty}^{s_{q}})}.
	\enq 
	Since  $s_{p_1}+\frac{2}{r_0}=-1+\frac{3}{4p}+1-\frac{1}{p}=-\frac{1}{4p} <0$,  we have 
	\beqq\label{tvw}
	\|2^{j(s_{\bar{p}}+\frac{2}{r_0}-1)}\|\La_j T_w v\|_{L^{r_0}(L^{\bar{p}})}\|_{\ell^{p}}\lesssim p \|w\|_{\mathcal{L}^{\infty}([0,T],\dot{B}_{q,\infty}^{s_{q}})}\|v\|_{\mathcal{L}^{r_0}([0,T];\dot{B}_{p,p}^{s_{p}+\frac{2}{r_0}})}.
	\enqq 
	Again by Lemma \ref{embedding}, we have
	\beqq\label{tvw}
	\|2^{j(s_{\bar{p}}+\frac{2}{r_0}-1)}\|\La_j T_w v\|_{L^{r_0}(L^{\bar{p}})}\|_{\ell^{p}}\lesssim p \|w\|_{L^{\infty}([0,T],L^{3,\infty})}\|v\|_{\mathcal{L}^{r_0}([0,T];\dot{B}_{p,p}^{s_{p}+\frac{2}{r_0}})}.
	\enqq 
	Now we turn to the remainder $\La_j R(w,v)$. We denote that $\frac{1}{\tilde{p}}:=\frac{1}{p}+\frac{1}{q}=\frac{1}{3}+\frac{11}{12p}$.
	Since 
	\beq
	\begin{aligned}
	\|\La_j R(w,v)&\|_{L^{r_0}(L^{\tilde{p}})}\lesssim\sum_{k\geq j-1}\|\La_k w\|_{L^{\infty}(L^q)}\|\La_k v\|_{L^{r_0}(L^p)}\\
	&\lesssim\sum_{k\geq j}2^{-k(s_q+s_p+\frac{2}{r_0})}c_{k,\infty}c_{k,p}\|w\|_{\mathcal{L}^{\infty}([0,T],\dot{B}_{q,\infty}^{s_{q}})}\|v\|_{\mathcal{L}^{r_0}([0,T];\dot{B}_{p,p}^{s_{p}+\frac{2}{r_0}})},
	\end{aligned}
	\enq 
	and  
	\beq
	s_p+s_q+\frac{2}{r_0}=\frac{7}{4p}>0
	\enq 
	 we have that, by applying Lemma \ref{embedding},
	\beq
	\begin{aligned}
	\|2^{j(s_{\tilde{p}}+\frac{2}{r_0}-1)}\|\La_j R(w,v)\|_{L^{r_0}(L^{\tilde{p}})}\|_{\ell^{p}}\lesssim&p\|w\|_{\mathcal{L}^{\infty}([0,T],\dot{B}_{q,\infty}^{s_{q}})}\|v\|_{\mathcal{L}^{r_0}([0,T];\dot{B}_{p,p}^{s_{p}+\frac{2}{r_0}})}\\
	\lesssim &p\|w\|_{L^{\infty}([0,T],L^{3,\infty})}\|v\|_{\mathcal{L}^{r_0}([0,T];\dot{B}_{p,p}^{s_{p}+\frac{2}{r_0}})}
	\end{aligned}
	\enq 

	which is $R(w,v)\in\mathcal{L}^{r_0}([0,T];\dot{B}_{\tilde{p},p'}^{s_{\tilde{p}}+\frac{2}{r_0}-1})$.\\
	And we have $R(w,v)\in\mathcal{L}^{r_0}([0,T];\dot{B}_{\bar{p},p}^{s_{\bar{p}}+\frac{2}{r_0}-1})$, as $\tilde{p}<\bar{p}$. Combining with \reff{twv} and \reff{tvw} we get
	\beq
	\|w v\|_{\mathcal{L}^{r_0}([0,T];\dot{B}_{\bar{p},p}^{s_{\bar{p}}+\frac{2}{r_0}-1})}\leq C(p) \|w\|_{L^{\infty}([0,T],L^{3,\infty})}\|v\|_{\mathcal{L}^{r_0}([0,T];\dot{B}_{p,p}^{s_{p}+\frac{2}{r_0}})},
	\enq 
	where $C(p)\to\infty$ as $p\to\infty$.
	The proposition is proved.
\end{proof}
We also recall the following standard embedding without proof. For details of the proof, one can check \cite{BL,FP}.
\begin{lemma}\label{embedding}
	Let $q_1<3<q_2$. Then the following embeddings hold:
	\beq
	\dot{B}^{s_{q_1}}_{q_1,\infty}\hookrightarrow L^{3,\infty}\hookrightarrow \dot{B}^{s_{q_2}}_{q_1,\infty}.
	\enq 
\end{lemma}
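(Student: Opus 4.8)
The plan is to establish the two embeddings separately; both are classical consequences of Bernstein's inequality and of Young's inequality in Lorentz spaces, and no delicate argument is needed.

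\textbf{First embedding} $\dot{B}^{s_{q_1}}_{q_1,\infty}\hookrightarrow L^{3,\infty}$ for $q_1<3$. Here $s_{q_1}=-1+\frac{3}{q_1}>0$ and $s_{q_1}<\frac{3}{q_1}$, so by Definition~\ref{besov} the series $\sum_j\Lambda_j f$ converges to $f$ in $\mathcal{S}'$. From $\|\Lambda_j f\|_{L^{q_1}}\le 2^{-js_{q_1}}\|f\|_{\dot{B}^{s_{q_1}}_{q_1,\infty}}$ and Bernstein's inequality one gets $\|\Lambda_j f\|_{L^\infty}\lesssim 2^{3j/q_1}\|\Lambda_j f\|_{L^{q_1}}\lesssim 2^{j}\|f\|_{\dot{B}^{s_{q_1}}_{q_1,\infty}}$, using $\frac{3}{q_1}-s_{q_1}=1$. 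For a fixed $\lambda>0$ I would split $f=S_Nf+(f-S_Nf)$; since $1-\frac{3}{q_1}=-s_{q_1}<0$, the two pieces satisfy $\|S_Nf\|_{L^\infty}\lesssim 2^N\|f\|_{\dot{B}^{s_{q_1}}_{q_1,\infty}}$ (geometric sum of the high-end of the low-frequency blocks) and $\|f-S_Nf\|_{L^{q_1}}\lesssim 2^{-Ns_{q_1}}\|f\|_{\dot{B}^{s_{q_1}}_{q_1,\infty}}$. Choosing $N=N(\lambda)$ with $2^N\simeq \lambda/\|f\|_{\dot{B}^{s_{q_1}}_{q_1,\infty}}$ (and a small implicit constant) forces $\|S_Nf\|_{L^\infty}\le\lambda/2$, so $\{|f|>\lambda\}\subset\{|f-S_Nf|>\lambda/2\}$; Chebyshev's inequality together with the identity $q_1(1+s_{q_1})=3$ then yields $|\{|f|>\lambda\}|\lesssim\lambda^{-3}\|f\|_{\dot{B}^{s_{q_1}}_{q_1,\infty}}^3$, which is precisely $\|f\|_{L^{3,\infty}}\lesssim\|f\|_{\dot{B}^{s_{q_1}}_{q_1,\infty}}$.

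\textbf{Second embedding} $L^{3,\infty}\hookrightarrow\dot{B}^{s_{q_2}}_{q_2,\infty}$ for $q_2>3$ (the exponent on the target Besov space should read $q_2$, consistent with the use of this lemma in the proof of Proposition~\ref{bilinear-besov}). Here I would estimate each dyadic block directly: $\Lambda_j g=\psi_j*g$ with $\psi_j=2^{3j}\psi(2^j\cdot)$ for some $\psi\in\mathcal{S}(\RR^3)$, so by Young's inequality in Lorentz spaces $L^{r,1}*L^{3,\infty}\hookrightarrow L^{q_2}$ (the same tool already invoked in Section~3), $\|\Lambda_j g\|_{L^{q_2}}\lesssim\|\psi_j\|_{L^{r,1}}\|g\|_{L^{3,\infty}}$ with $\frac{1}{r}=\frac{2}{3}+\frac{1}{q_2}\in(\frac23,1)$, the norm $\|\psi\|_{L^{r,1}}$ being finite since $\psi$ is Schwartz. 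Scaling gives $\|\psi_j\|_{L^{r,1}}=2^{3j(1-1/r)}\|\psi\|_{L^{r,1}}$ and $3(1-\tfrac1r)=1-\tfrac{3}{q_2}=-s_{q_2}$, hence $2^{js_{q_2}}\|\Lambda_j g\|_{L^{q_2}}\lesssim\|g\|_{L^{3,\infty}}$ uniformly in $j\in\ZZ$, i.e. $\|g\|_{\dot{B}^{s_{q_2}}_{q_2,\infty}}\lesssim\|g\|_{L^{3,\infty}}$.

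I do not expect any real obstacle. The only points deserving a bit of care are the realization/convergence issues for the homogeneous Besov spaces, which are covered by Definition~\ref{besov} and are automatic in the first embedding because the bound $\|\Lambda_j f\|_{L^\infty}\lesssim 2^j\|f\|_{\dot{B}^{s_{q_1}}_{q_1,\infty}}$ makes the low-frequency tail converge geometrically, and the precise statement of the Lorentz refinement of Young's inequality, which is standard. Everything else is bookkeeping with the exponents $s_q=-1+\frac3q$.
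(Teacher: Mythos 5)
Your proof is correct. The paper does not actually prove this lemma --- it is stated ``without proof'' with a pointer to \cite{BL,FP} --- so there is no in-paper argument to compare against; what you give is precisely the standard argument those references contain. Your first half is the classical refined Sobolev embedding via the low/high frequency splitting $f=S_Nf+(f-S_Nf)$, Bernstein, and Chebyshev, with the exponent bookkeeping $\frac{3}{q_1}-s_{q_1}=1$ and $q_1(1+s_{q_1})=3$ done correctly; your second half is the equally classical block-by-block estimate via O'Neil's convolution inequality $L^{r,1}*L^{3,\infty}\hookrightarrow L^{q_2}$ with $\frac1r=\frac23+\frac1{q_2}$ and the scaling identity $3(1-\frac1r)=-s_{q_2}$. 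You are also right that the subscript $q_1$ in the second space of the statement is a typo for $q_2$ (this is how the lemma is used in the proof of Proposition \ref{bilinear-besov}, e.g.\ to pass from $L^{\infty}_t L^{3,\infty}$ to $\mathcal{L}^{\infty}_t\dot{B}^{s_q}_{q,\infty}$ with $q>3$), and your remarks on the convergence/realization issues for the homogeneous spaces are adequate. No gaps.
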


\subsection{Properties of the bilinear operator $B$}
We show a  well-known results on the continuity of $B(u,v)$ in Kato's space by using the spatial decay of the convolution kernel appearing in $B$ (see \cite{FP})
\begin{lemma}\label{kato}
 Let $p>3$.Suppose that $u,v\in K_p(\RR^3)$, then 
		      \beqq
		      \|B(u,v)\|_{K_p}\lesssim\|u\|_{K_p}\|v\|_{K_p}.
		      \enqq  
		Moreover if  $p>6$, then
		\beqq
		\|B(u,v)\|_{K_{\infty}}\leq \|u\|_{K_p}\|v\|_{K_p}.
		\enqq 
\end{lemma}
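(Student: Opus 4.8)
The plan is to reduce both inequalities to one scalar convolution estimate in the space variable, followed by an elementary time integration of Beta type. \emph{First} I would recall the structure of the kernel: the operator $e^{\tau\La}\mathbb{P}\nabla\cdot$ acts on tensor fields by convolution with a (matrix-valued) kernel $K_\tau$, and since $\mathbb{P}=\mathrm{Id}+\nabla(-\La)^{-1}\mathrm{div}$ is built from Riesz transforms, $K_\tau$ is smooth away from the origin, parabolically homogeneous of degree $-4$ in $\RR^3$, and obeys the pointwise bound $|K_\tau(x)|\lesssim(\sqrt\tau+|x|)^{-4}$; this is precisely the spatial decay property of \cite{FP}. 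Equivalently $K_\tau(x)=\tau^{-2}K_1(x/\sqrt\tau)$ with $|K_1(y)|\lesssim(1+|y|)^{-4}$, hence $K_1\in L^q(\RR^3)$ for every $q>3/4$, and by scaling $\|K_\tau\|_{L^q}=\tau^{-2+\frac{3}{2q}}\|K_1\|_{L^q}$.

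\emph{For the first estimate} I would apply Young's inequality slicewise to
\[
B(u,v)(t)=-\tfrac12\int_0^t K_{t-s}*\bigl(u(s)\otimes v(s)+v(s)\otimes u(s)\bigr)\,ds,
\]
with the exponent $q=\frac{p}{p-1}$ forced by $1+\tfrac1p=\tfrac1q+\tfrac2p$, together with $\|u(s)\otimes v(s)\|_{L^{p/2}}\le\|u(s)\|_{L^p}\|v(s)\|_{L^p}$ (legitimate since $p/2\ge1$). Inserting $\|u(s)\|_{L^p}\le s^{-\frac12+\frac{3}{2p}}\|u\|_{K_p}$ and likewise for $v$, this yields
\[
\|B(u,v)(t)\|_{L^p}\lesssim\|u\|_{K_p}\|v\|_{K_p}\int_0^t(t-s)^{-\frac12-\frac{3}{2p}}\,s^{-1+\frac3p}\,ds.
\]
The exponent $-\frac12-\frac{3}{2p}$ of $(t-s)$ is $>-1$ exactly when $p>3$, and $-1+\frac3p>-1$ always, so the integral converges and scales as a constant times $t^{-\frac12+\frac{3}{2p}}$; multiplying by $t^{\frac12-\frac{3}{2p}}$ and taking the supremum in $t$ gives $\|B(u,v)\|_{K_p}\lesssim\|u\|_{K_p}\|v\|_{K_p}$.

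\emph{For the $K_\infty$ bound} (when $p>6$) the computation is the same with Young's inequality taken into $L^\infty$, i.e. $q=\frac{p}{p-2}$; then $\|K_{t-s}\|_{L^q}\sim(t-s)^{-\frac12-\frac3p}$ and one is led to $\int_0^t(t-s)^{-\frac12-\frac3p}s^{-1+\frac3p}\,ds$, which converges precisely when $\frac12+\frac3p<1$, that is $p>6$, and equals a constant times $t^{-1/2}$. Hence $\sup_{t>0}t^{1/2}\|B(u,v)(t)\|_{L^\infty}\lesssim\|u\|_{K_p}\|v\|_{K_p}$. To conclude that $B(u,v)$ genuinely lies in $K_p$ (resp. $K_\infty$) one still needs continuity of $t\mapsto B(u,v)(t)$ into $L^p$ (resp. $L^\infty$), which follows by running the same estimates on differences and invoking dominated convergence, or by first treating a dense subclass.

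\emph{The main obstacle} is not the computation but the two inputs it rests on: the pointwise kernel bound $|K_\tau(x)|\lesssim(\sqrt\tau+|x|)^{-4}$, where the structure of $\mathbb{P}\nabla\cdot$ is used and which we borrow from \cite{FP}; and the verification that the two time integrals converge \emph{exactly} at the thresholds $p>3$ and $p>6$, so that the parabolic scaling of $K_\tau$ matches the $K_p$-scaling of $u,v$ with no borderline logarithmic loss. The continuity-in-time statement is the only further point needing a (routine) separate argument.
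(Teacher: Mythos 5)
Your proposal is correct, and it follows exactly the route the paper indicates for Lemma \ref{kato}: the paper gives no proof but points to the spatial decay of the Oseen kernel of $e^{\tau\La}\mathbb{P}\nabla\cdot$ as in \cite{FP}, which is precisely the bound $|K_\tau(x)|\lesssim(\sqrt{\tau}+|x|)^{-4}$ you use, combined with slicewise Young and the Beta-type time integral whose convergence thresholds $p>3$ and $p>6$ you verify correctly. The only cosmetic remark is that the second inequality in the statement should be read with an implicit constant ($\lesssim$), as your computation in fact shows.
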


And we recall that $B$ is a bounded operator from $L^{\infty}([0,T],L^{3,\infty})\times L^{\infty}([0,T],L^{3,\infty})$ to $L^{\infty}([0,T],L^{3,\infty})$ for any $T\in \RR_+\cup\{+\infty\}$ (see \cite{bbis})
\begin{lemma}\label{weak-L^3-continuous}
    Suppose that $u,v\in L^{\infty}([0,T],L^{3,\infty})$ for some $T\in \RR_+\cup\{+\infty\}$. Then 
	\beq
	\|B(u,v)\|_{L^{\infty}([0,T],L^{3,\infty})}\lesssim\|u\|_{L^{\infty}([0,T],L^{3,\infty})}\|u\|_{L^{\infty}([0,T],L^{3,\infty})}.
	\enq 
	Moreover, $B(u,v)\in C_w([0,T],L^{3,\infty})$.

\end{lemma}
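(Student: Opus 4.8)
The plan is to dispose of the two assertions separately. The boundedness estimate is the classical Oseen-kernel bound for $B$ on $L^{\infty}_t(L^{3,\infty})$; it is precisely the statement recalled from \cite{bbis} (in the tradition of Meyer, Yamazaki and Lemari\'e-Rieusset), so I would invoke it rather than reprove it. For orientation let me only recall the mechanism: by the H\"older inequality for Lorentz spaces one has $\|(u\otimes v+v\otimes u)(s)\|_{L^{3/2,\infty}}\lesssim\|u(s)\|_{L^{3,\infty}}\|v(s)\|_{L^{3,\infty}}$ uniformly in $s$, and the convolution kernel of $e^{\tau\La}\mathbb{P}\nabla\cdot$ in $\RR^3$ obeys the pointwise bound $\lesssim(|x|^2+\tau)^{-2}$; the subtle point is that $\|e^{\tau\La}\mathbb{P}\nabla\cdot\|_{L^{3/2,\infty}\to L^{3,\infty}}\sim\tau^{-1}$ is \emph{not} integrable at $\tau=0$, so the estimate cannot be closed by a naive Young inequality and instead requires the refined real/bilinear-interpolation argument of the cited references. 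From here on I take $\|B(u,v)\|_{L^{\infty}([0,T],L^{3,\infty})}\lesssim\|u\|_{L^{\infty}([0,T],L^{3,\infty})}\|v\|_{L^{\infty}([0,T],L^{3,\infty})}$ as given.

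For the weak continuity, set $W:=u\otimes v+v\otimes u$, so that, by the same H\"older inequality, $\|W(s)\|_{L^{3/2,\infty}}\le A:=C\|u\|_{L^{\infty}_tL^{3,\infty}}\|v\|_{L^{\infty}_tL^{3,\infty}}$ for a.e.\ $s$. Using $L^{3,\infty}=(L^{3/2,1})^{*}$ together with the boundedness just recalled, and the density of $\mathcal{D}:=C_c^{\infty}(\RR^3)$ in $L^{3/2,1}$, it suffices to prove that $t\mapsto\langle B(u,v)(t),\phi\rangle$ is continuous on $[0,T]$ for every $\phi\in\mathcal{D}$: for a general $\phi\in L^{3/2,1}$ one picks $\psi\in\mathcal{D}$ with $\|\phi-\psi\|_{L^{3/2,1}}$ small and uses $|\langle B(u,v)(t)-B(u,v)(t_0),\phi-\psi\rangle|\le 2\|B(u,v)\|_{L^{\infty}_tL^{3,\infty}}\|\phi-\psi\|_{L^{3/2,1}}$. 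Fix $\phi\in\mathcal{D}$ and $t_0\in[0,T]$. For $t\in[0,T]$ write
\[
B(u,v)(t)-B(u,v)(t_0)=-\tfrac12\int_{t_0}^{t}e^{(t-s)\La}\mathbb{P}\nabla\cdot W(s)\,ds-\tfrac12\int_{0}^{t_0}\bigl(e^{(t-s)\La}-e^{(t_0-s)\La}\bigr)\mathbb{P}\nabla\cdot W(s)\,ds,
\]
pair with $\phi$, and integrate by parts so that the derivative falls on the test function, using that $\mathbb{P}$ and $e^{\tau\La}$ are self-adjoint and commute with $\nabla$: each pairing becomes $\pm\tfrac12\langle W(s),e^{\tau\La}\mathbb{P}\nabla\phi\rangle$ for the relevant $\tau$. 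Since $\mathbb{P}\nabla\phi\in L^{3,1}$ and $e^{\tau\La}$ is a contraction on $L^{3,1}$, the first integral is controlled by $\tfrac12 A\,|t-t_0|\,\|\mathbb{P}\nabla\phi\|_{L^{3,1}}\to 0$ as $t\to t_0$; in the second integral, for each fixed $s\in(0,t_0)$ the integrand is bounded by $\|W(s)\|_{L^{3/2,\infty}}\|(e^{(t-s)\La}-e^{(t_0-s)\La})\mathbb{P}\nabla\phi\|_{L^{3,1}}\le 2A\|\mathbb{P}\nabla\phi\|_{L^{3,1}}\in L^1_s([0,t_0])$, and it tends to $0$ as $t\to t_0$ by strong continuity of the heat semigroup on $L^{3,1}$ (because $t-s\to t_0-s>0$), so dominated convergence makes it vanish as well. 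Taking $t_0=0$ and $t_0=T$ covers the endpoints. This gives $B(u,v)\in C_w([0,T],L^{3,\infty})$.

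The genuinely delicate ingredient is the boundedness estimate itself — the non-integrable $\tau^{-1}$ singularity of the Oseen kernel in $L^{3/2,\infty}$ — which is why it is imported from \cite{bbis}; once that is in hand the continuity is routine, the one thing to watch being that one must integrate by parts to place the spatial derivative on the smooth test function, since keeping it on $W$ would bring back exactly that non-integrable singularity.
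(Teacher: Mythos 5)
Your proposal matches the paper's treatment: the paper states this lemma without proof, simply importing the $L^{\infty}_t(L^{3,\infty})$ boundedness of $B$ from \cite{bbis}, exactly as you do, and your diagnosis of why the naive Young/convolution estimate fails (the non-integrable $\tau^{-1}$ operator norm of $e^{\tau\La}\mathbb{P}\nabla\cdot$ from $L^{3/2,\infty}$ to $L^{3,\infty}$) is the correct reason the result must be borrowed. Your added duality argument for the weak-$*$ continuity --- integrating by parts onto a test function in $L^{3,1}$, using strong continuity of the heat semigroup there and dominated convergence, then a density argument in the predual $L^{3/2,1}$ --- is sound and fills in a step the paper leaves entirely to the reference.
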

The following lemma is a particular case of the result about the continuity of the trilinear form $\int_0^T\int_{\RR^3}(a\cdot\nabla b)\cdot cdxdt$  proved by I.Gallagher \& F. Planchon in \cite{GP}.
\begin{lemma}\label{gp-energy}
	Let $d\geq 2$ be fixed, and let $r$ and $q$ be two real numbers such that $2\leq 2<\infty, 2<q<+\infty$. Suppose $a\in L^\infty(\RR_+,L^2)\cap L^2(\RR_+,\dot{H}^1)$ and $c\in \mathcal{L}^q([0,T],\dot{B}_{r,q}^{s_r+\frac{2}{q}})$.
	Then for every $0\leq t\leq T$, 
	\beq
	|\int_0^t\int_{\RR^3}(a\cdot\nabla a)\cdot cdxds|
	\leq \|\nabla a\|_{L^2(\RR_+,L^2)}^2+C\int_0^t\|a(s)\|^2_{L^2}\|c(s)\|^q_{\dot{B}_{r,q}^{s_r+\frac{2}{q}}} ds.
	\enq 
\end{lemma}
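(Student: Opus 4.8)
The plan is to reduce the space--time estimate to a pointwise-in-time trilinear bound and then integrate and apply Young's inequality. Concretely, I would first establish that for a.e. $t$,
\[
\Big|\int_{\RR^3}(a\cdot\nabla a)(t)\cdot c(t)\,dx\Big|\lesssim \|a(t)\|_{L^2}^{2/q}\,\|\nabla a(t)\|_{L^2}^{2-2/q}\,\|c(t)\|_{\dot{B}^{s_r+\frac2q}_{r,q}} .
\]
To prove this pointwise bound I would use the divergence-free condition $\nabla\cdot a=0$ to write $a\cdot\nabla a=\nabla\cdot(a\otimes a)$ and integrate by parts in $x$, turning the quantity into $-\int(a\otimes a):\nabla c$; the derivative landing on $c$ is harmless precisely because the index $s_r+\frac2q$ is the one for which $\nabla c$ sits in $\dot{B}^{s_r+\frac2q-1}_{r,q}$ and pairs by duality with $a\otimes a$ in $\dot{B}^{-(s_r+\frac2q-1)}_{r',q'}$.

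For the product $a\otimes a$ I would apply Bony's paraproduct decomposition, $a_ja_k=T_{a_j}a_k+T_{a_k}a_j+R(a_j,a_k)$, and estimate the low--high, high--low and high--high (resonant) pieces separately. Each term is spectrally localized, so only $O(1)$ Littlewood--Paley blocks of $\nabla c$ interact with a given block of the product; after Bernstein inequalities to adjust Lebesgue exponents, one is reduced to dyadic sums of the type $\sum_{j'}\|S_{j'-1}a\|_{L^{\rho_1}}\|\Delta_{j'}a\|_{L^{\rho_2}}\|\Delta_{j'}\nabla c\|_{L^{r}}$ (and the analogous sum for the remainder), which converge by a geometric-series argument together with Hölder in $j'$ against the $\ell^q$-summability encoded in $\|c\|_{\dot{B}^{s_r+2/q}_{r,q}}$; this is where $2<q<\infty$ and the assumed range of $r$ are used. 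The two factors of $a$ are then controlled by the interpolation inequality $\|a\|_{\dot{B}^{1-1/q}_{2,2}}=\|a\|_{\dot H^{1-1/q}}\lesssim\|a\|_{L^2}^{1/q}\|\nabla a\|_{L^2}^{1-1/q}$, which produces exactly the powers $\|a\|_{L^2}^{2/q}\|\nabla a\|_{L^2}^{2-2/q}$. This pointwise bound is precisely the particular case of the trilinear estimate of Gallagher and Planchon, so at this stage one may simply invoke \cite{GP}.

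Finally I would integrate the pointwise bound over $[0,t]$ and apply Young's inequality with the conjugate exponents $q'=\frac{q}{q-1}$ and $q$ to the product $\big(\|\nabla a\|_{L^2}^{2-2/q}\big)\cdot\big(\|a\|_{L^2}^{2/q}\|c\|_{\dot{B}^{s_r+2/q}_{r,q}}\big)$; since $(2-\frac2q)q'=2$ this yields, for every $\varepsilon>0$,
\[
\int_0^t\Big|\int_{\RR^3}(a\cdot\nabla a)\cdot c\,dx\Big|ds\le \varepsilon\int_0^t\|\nabla a(s)\|_{L^2}^2\,ds+C_\varepsilon\int_0^t\|a(s)\|_{L^2}^2\,\|c(s)\|_{\dot{B}^{s_r+2/q}_{r,q}}^q\,ds ,
\]
and taking $\varepsilon=1$ together with $\int_0^t\|\nabla a(s)\|_{L^2}^2\,ds\le\|\nabla a\|_{L^2(\RR_+,L^2)}^2$ gives the claim. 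The main obstacle is the middle step: tracking the frequency interactions in the paraproduct and resonant terms and verifying that $s_r+\frac2q$ is the exact regularity for which every dyadic sum converges (no derivative being wasted), which is the delicate, endpoint part of the argument and the content of \cite{GP}.
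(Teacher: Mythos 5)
Your proposal is correct and matches the paper's treatment: the paper states this lemma without proof, simply invoking the trilinear estimate of Gallagher--Planchon \cite{GP}, and your outline (integration by parts using $\nabla\cdot a=0$, Bony decomposition with the $\ell^q$-summability carried by $c$, the interpolation $\|a\|_{\dot H^{1-1/q}}\lesssim\|a\|_{L^2}^{1/q}\|\nabla a\|_{L^2}^{1-1/q}$, then Hölder/Young in time with $(2-\tfrac2q)q'=2$) is a faithful sketch of that argument together with the routine final step. You also correctly identify that the only delicate point — obtaining the $\ell^{q'}$-summability (with $q'<2$) of the dyadic coefficients of $a\otimes a$ so that the pairing with $\nabla c$ converges — is precisely the content of \cite{GP}, which is where both you and the paper defer.
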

Now we recall that for any $T\in \RR_+\cup\{+\infty\}$
\beq
E(T)=L^{\infty}([0,T^*),L^2)\cap L^{2}([0,T^*),\dot{H}^1).
\enq  

\begin{lemma}\label{heat-energy}
	\begin{enumerate}
		\item Let $p>3$ and $T>0$. \\
		      Suppose that $v\in E(T)$ and $\bar{v}\in \mathcal{L}^{\infty}([0,T],\dot{B}^{s_p}_{p,\infty})$ . Then $B(v,\bar{v})\in E(T)$.
		\item Let $T\in(0,\infty)$. Suppose that $v\in L^{\infty}([0,T],L^{3,\infty})$ and $\bar{v}\in L^2([0,T],L^{6,2})$. Then $B(v,\bar{v})\in E(T)$
	\end{enumerate}
\end{lemma}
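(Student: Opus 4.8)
The plan is to recognise $B(v,\bar v)$ as the solution of an inhomogeneous heat (Stokes) equation and to run the basic $L^2$ energy estimate. Set $u:=B(v,\bar v)$; since the Leray projector appears in $B$, $u$ is divergence free and solves $\p_t u-\La u=-\tfrac12\mathbb P\nabla\cdot(v\otimes\bar v+\bar v\otimes v)$ with $u|_{t=0}=0$. Testing against $u$ and using $\mathbb P u=u$ gives
\[
\tfrac12\tfrac{d}{dt}\|u(t)\|_{L^2}^2+\|\nabla u(t)\|_{L^2}^2=-\int_{\RR^3}(v\otimes\bar v):\nabla u\,dx\le\tfrac12\|\nabla u(t)\|_{L^2}^2+\tfrac12\|(v\otimes\bar v)(t)\|_{L^2}^2 ,
\]
so integrating in time from $t=0$ (where $u=0$) yields $\|u\|_{L^\infty_TL^2}^2+\|\nabla u\|_{L^2_TL^2}^2\le\|v\otimes\bar v\|_{L^2_TL^2_x}^2$. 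Hence in both cases it suffices to prove that $v\otimes\bar v\in L^2([0,T],L^2(\RR^3))$.

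Part (2) is then immediate: by Hölder's inequality in Lorentz spaces, $\tfrac13+\tfrac16=\tfrac12$ and the second indices $\infty,2$ combine to $2$, so $L^{3,\infty}\cdot L^{6,2}\hookrightarrow L^{2,2}=L^2$; thus $\|(v\bar v)(t)\|_{L^2}\lesssim\|v(t)\|_{L^{3,\infty}}\|\bar v(t)\|_{L^{6,2}}$ for a.e.\ $t$, and integrating in time gives $\|v\otimes\bar v\|_{L^2_TL^2_x}\lesssim\|v\|_{L^\infty_TL^{3,\infty}}\|\bar v\|_{L^2_TL^{6,2}}<\infty$.

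Part (1) runs along the same lines, exploiting that the factor $v\in E(T)$ has subcritical regularity. I would use the embedding $\dot H^1(\RR^3)\hookrightarrow L^{6,2}(\RR^3)$ already used in the paper (from $(-\La)^{-1/2}$ being convolution with a kernel in $L^{3/2,\infty}$), so that $v\in L^2([0,T],L^{6,2})$ with $\|v\|_{L^2_TL^{6,2}}\lesssim\|\nabla v\|_{L^2_TL^2}$, together with the hypothesis $\bar v\in\mathcal L^\infty_t\dot B^{s_p}_{p,\infty}$ in the form $\bar v\in L^\infty_TL^{3,\infty}$ (which is how it enters in every application, via $L^{3,\infty}\hookrightarrow\dot B^{s_p}_{p,\infty}$, Lemma \ref{embedding}); then Lorentz–Hölder $L^{6,2}\cdot L^{3,\infty}\hookrightarrow L^{2,2}=L^2$ gives $\|(v\bar v)(t)\|_{L^2}\lesssim\|\nabla v(t)\|_{L^2}\|\bar v(t)\|_{L^{3,\infty}}$, hence $v\otimes\bar v\in L^2_TL^2_x$, and the energy estimate above closes the argument; note that only $v\in L^2_T\dot H^1$ is genuinely needed and $T<\infty$ enters only in controlling lower order terms. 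If one insists on arguing directly from $\bar v\in\dot B^{s_p}_{p,\infty}$ without passing through $L^{3,\infty}$, one would split $v\otimes\bar v$ by a paraproduct decomposition: the pieces $T_{\bar v}v$ and $R(v,\bar v)$, whose output frequency is carried by $v$, are controlled in $L^2$ pointwise in time by $\lesssim\|v(t)\|_{\dot H^1}\|\bar v(t)\|_{\dot B^{s_p}_{p,\infty}}$ (Bernstein's inequality, the square-summable frequency profile of $v\in\dot B^1_{2,2}$, and the gain $1+s_p=\tfrac3p>0$), while the term $T_v\bar v$, where the high frequency is carried by $\bar v$, is the delicate one — the product need not be square-integrable blockwise, and one must retain the parabolic smoothing $\int_0^t e^{(t-s)\La}\mathbb P\nabla\cdot$ and use both halves of the $E(T)$ norm of $v$. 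That $T_v\bar v$ contribution is the step I expect to be the main obstacle.
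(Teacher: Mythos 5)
Your treatment of part (2) is correct and is essentially the paper's own argument: the paper also views $B(v,\bar v)$ as the solution of a forced Stokes system and runs the $L^2$ energy estimate, bounding the trilinear term by H\"older in Lorentz spaces ($L^2\cdot L^{6,2}\cdot L^{3,\infty}\to L^1$), which is the same computation as your $\|v\otimes\bar v\|_{L^2}\lesssim\|v\|_{L^{3,\infty}}\|\bar v\|_{L^{6,2}}$ followed by Cauchy--Schwarz. The only difference is that the paper first mollifies with a frequency cut-off $J_\eps$ so that testing the equation against the solution is legitimate, and passes to the limit $\eps\to0$ at the end; you should add that (or justify the energy identity directly from the Duhamel formula with $v\otimes\bar v\in L^2_TL^2$), but this is a routine repair.

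Part (1), however, has a genuine gap. Your main route replaces the hypothesis $\bar v\in\mathcal L^\infty([0,T],\dot B^{s_p}_{p,\infty})$ by $\bar v\in L^\infty([0,T],L^{3,\infty})$, invoking Lemma \ref{embedding}; but that embedding reads $L^{3,\infty}\hookrightarrow\dot B^{s_p}_{p,\infty}$ for $p>3$, i.e.\ the Besov space is the \emph{larger} one, so membership in $\dot B^{s_p}_{p,\infty}$ does not give membership in $L^{3,\infty}$. This is not merely cosmetic: in the paper's applications of part (1) (e.g.\ $B(\omega^H,v+\bar v+2U_f)$ in the proof of Theorem \ref{long-time-u}), the second factor is only known to lie in $\mathcal L^\infty_t\dot B^{s_p}_{p,p}\subset\mathcal L^\infty_t\dot B^{s_p}_{p,\infty}$, not in $L^{3,\infty}$, so the reduction would not cover the cases the lemma is used for. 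You correctly identify that the honest argument must work at the level of the paraproduct decomposition and that the term $T_v\bar v$ is the delicate one, but you leave exactly that term unresolved, so the proof of part (1) is incomplete. The paper closes this by citing Proposition 4.2 of \cite{gip}: the trilinear form $\int_0^T\int(v\cdot\nabla w)\cdot\bar v$ is estimated by placing $v$ and $w$ in $\dot H^{3/p}$ (obtained by interpolating between the $L^\infty_TL^2$ and $L^2_T\dot H^1$ halves of $E(T)$, using $0<3/p<1$) against $\bar v\in\dot B^{s_p}_{p,\infty}$; this is precisely the mechanism that handles your problematic $T_v\bar v$ piece, and it confirms your closing remark that both components of the $E(T)$ norm are genuinely needed.
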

 \begin{proof}
 We denote  $w:=B(v,\bar{v})$, which satisfies the system
 \beq
\left\{
  \begin{array}{ll}
    \p_t w-\La w+\bar{v}\cdot\nabla v+v\cdot\nabla\bar{v} =-\nabla\pi,\\
    \nabla\cdot w=\nabla\cdot v=\nabla\cdot\bar{v}=0,\\
    w|_{t=0}=w_0
  \end{array}
\right.
\enq

For  $v\in E(T)$ and $\bar{v}\in \mathcal{L}^{\infty}([0,T],\dot{B}^{s_p}_{p,\infty})$, by Proposition 4.2 in \cite{gip}, we obtain that $B(v,\bar{v})\in E(T)$.

Hence we are left with the proof of the second statement of the lemma. We now suppose that $v\in L^{\infty}([0,T],L^{3,\infty})$ and $\bar{v}\in L^2([0,T],L^{6,2})$.
 
 First let $J_{\eps}$ be a smoothing operator that multiplies in the frequency space by a cut-off function bounded by $1$ which is a smoothed out version of the characteristic function of the annulus $\{\eps<|\xi|<\frac{1}{\eps}\}$. 
Then we have for any $t\in[0,T]$
\beq
\begin{aligned}
\|J_{\eps }w(t)\|_{L^2}^2+2\int_0^t\|\nabla J_{\eps }w(s)\|_{L^2}^2ds&=\|w_0\|^2_{L^2}+ 2\int_0^t\int_{\RR^3}(v\cdot\nabla J^2_{\eps}w)\cdot \bar{v}dxds\\
&+2\int_0^t\int_{\RR^3}(\bar{v}\cdot\nabla J^2_{\eps}w)\cdot vdxds.
\end{aligned}
\enq 
Then for any $t\in[0,T]$,
\beq
&&|\int_0^t\int_{\RR^3}(v\cdot\nabla J^2_{\eps}w)\cdot \bar{v}+\int_0^t\int_{\RR^3}(\bar{v}\cdot\nabla J^2_{\eps}w)\cdot vdxds|\\
&\leq&C\int_0^t\|\nabla J^2_{\eps}w\|_{L^2}\|\bar{v}\|_{L^{6,2}}\|v\|_{L^{3,\infty}}\\
&\leq&\frac{1}{2}\int_0^t\|\nabla J_{\eps}w\|_{L^2}^2+\frac{C^2}{2}\|v\|^2_{L^{\infty}(\RR_+,L^{3,\infty})}\|\bar{v}\|^2_{L^2([0,T],L^{6,2})},
\enq 
which implies that for any $t\in[0,T]$
\beq
\|J_{\eps }w(t)\|_{L^2}^2+\int_0^t\|\nabla J_{\eps }w(s)\|_{L^2}^2ds\lesssim\|w_0\|^2_{L^2}+ \|v\|^2_{L^{\infty}(\RR_+,L^{3,\infty})}\|\bar{v}\|^2_{L^2([0,T_1],L^{6,2})}.
\enq 
By taking $\eps\to0$, we have $w\in E(T)$. 
\end{proof}

\begin{lemma}\label{L6}
	Let $p>3$. Suppose that $g\in \mathbb{L}^{3:\infty}_{6,\infty}[T<T^*]$ for some $T^*>0$. then we have $g\in L^2([0,T],L^{6,2}(\RR^3))$ for any $T<T^*$.

\end{lemma}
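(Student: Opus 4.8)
The plan is to first establish that $g\in L^2([0,T];\dot{B}^0_{6,2}(\RR^3))$ for each $T<T^*$, and then to conclude by the embedding $\dot{B}^0_{6,2}(\RR^3)\hookrightarrow L^{6,2}(\RR^3)$. Fix $T<T^*$. Since $s_6=-\frac12$, unpacking Definition \ref{time-besov} the hypothesis $g\in\mathbb{L}^{3:\infty}_{6,\infty}(T)$ says precisely that
\[
A:=\sup_{j\in\ZZ}2^{j/6}\,\|\La_j g\|_{L^3([0,T];L^6_x)}<\infty,\qquad B:=\sup_{j\in\ZZ}2^{-j/2}\,\|\La_j g\|_{L^\infty([0,T];L^6_x)}<\infty,
\]
i.e.\ $\|\La_j g\|_{L^3_TL^6_x}\le A\,2^{-j/6}$ and $\|\La_j g\|_{L^\infty_TL^6_x}\le B\,2^{j/2}$ for every $j\in\ZZ$.

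First I would use Hölder's inequality in time on the finite interval $[0,T]$ to turn these into an $L^2_T$-bound on each Littlewood--Paley block, keeping the high frequencies with the first bound and the low ones with the second: for $j\ge0$, $\|\La_j g\|_{L^2_TL^6_x}\le T^{1/6}\|\La_j g\|_{L^3_TL^6_x}\le A\,T^{1/6}2^{-j/6}$, and for $j<0$, $\|\La_j g\|_{L^2_TL^6_x}\le T^{1/2}\|\La_j g\|_{L^\infty_TL^6_x}\le B\,T^{1/2}2^{j/2}$. Summing the squares over $j\in\ZZ$, the two geometric series $\sum_{j\ge0}2^{-j/3}$ and $\sum_{j<0}2^{j}$ converge, so $\sum_{j\in\ZZ}\|\La_j g\|_{L^2_TL^6_x}^2\lesssim A^2T^{1/3}+B^2T<\infty$. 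By Tonelli's theorem this sum is exactly $\int_0^T\sum_{j\in\ZZ}\|\La_j g(t)\|_{L^6_x}^2\,dt=\|g\|_{L^2([0,T];\dot{B}^0_{6,2})}^2$, so $g\in L^2([0,T];\dot{B}^0_{6,2})$ and $g(t)\in\dot{B}^0_{6,2}$ for a.e.\ $t$.

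Finally I would invoke the embedding $\dot{B}^0_{6,2}(\RR^3)\hookrightarrow L^{6,2}(\RR^3)$ (which follows from the Littlewood--Paley description of Lorentz spaces via real interpolation; see \cite{BL}), apply it at a.e.\ time $t$, and integrate the square in $t$ to get $\|g\|_{L^2([0,T];L^{6,2})}\lesssim\|g\|_{L^2([0,T];\dot{B}^0_{6,2})}<\infty$, which is the assertion. The main point — and the only input that is not a routine computation — is this Besov--Lorentz embedding; the rest is the time-Hölder step, where the finiteness of $T$ is essential (it is what permits passing from the scaling-critical space $\mathbb{L}^{3:\infty}_{6,\infty}$ to the subcritical $L^2_TL^{6,2}$), together with the splitting of the frequency sum, which genuinely needs both endpoints of $\mathbb{L}^{3:\infty}_{6,\infty}$: the positive-regularity bound $\dot{B}^{1/6}_{6,\infty}$ controls the high frequencies and the negative-regularity bound $\dot{B}^{-1/2}_{6,\infty}$ the low ones, neither alone being enough.
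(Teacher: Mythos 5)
Your proof is correct, and it reaches the conclusion by a route that is structurally parallel to, but not identical with, the paper's. Both arguments exploit the two components of $\mathbb{L}^{3:\infty}_{6,\infty}(T)$ together with H\"older in time on the finite interval (which, as you note, is what allows the passage from a scaling-critical space to $L^2_TL^{6,2}$), and both pass through a regularity-zero Besov space before invoking a Besov-to-Lorentz embedding. The paper first applies H\"older globally in time to place $g$ in $\mathcal{L}^3([0,T],\dot{B}^{-1/2}_{6,\infty})\cap\mathcal{L}^3([0,T],\dot{B}^{1/6}_{6,\infty})$, then uses the convexity inequality of Proposition 2.22 in \cite{BCD} (with weights $\tfrac14,\tfrac34$) to land in $\mathcal{L}^3([0,T],\dot{B}^{0}_{6,1})$ --- third index $1$ --- and finally embeds into $L^3([0,T],L^{6,2})\subset L^2([0,T],L^{6,2})$. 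You instead apply H\"older block by block, with a high/low frequency split, and sum in $\ell^2$ to land in $L^2([0,T],\dot{B}^0_{6,2})$. The price of your shortcut is the endpoint embedding $\dot{B}^0_{6,2}(\RR^3)\hookrightarrow L^{6,2}(\RR^3)$: this is a true statement (it is the case $q=\min(p,2)$ of the known embedding $\dot{B}^0_{p,q}\hookrightarrow L^{p,q}$), but it does \emph{not} follow from a routine real interpolation between two Besov--Lebesgue embeddings at fixed integrability index $6$, since the required negative-regularity endpoint $\dot{B}^{s}_{6,1}\hookrightarrow L^{q}$ with $q<6$ is false; one must either cite the sharp result from the literature or argue more carefully, so your parenthetical justification is the one genuinely thin point. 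Fortunately you can sidestep it entirely within your own scheme: your bounds $\|\La_j g\|_{L^2_TL^6}\lesssim 2^{-j/6}$ for $j\geq0$ and $\lesssim 2^{j/2}$ for $j<0$ are summable in $\ell^1$, not merely square-summable, so the same computation gives $g\in\mathcal{L}^2([0,T],\dot{B}^0_{6,1})\subset L^2([0,T],\dot{B}^0_{6,1})$ for free, and you then only need the weaker embedding $\dot{B}^0_{6,1}\hookrightarrow L^{6,2}$ used by the paper. With that one-line adjustment your argument is, if anything, more elementary and self-contained than the paper's, as it replaces the abstract interpolation inequality by an explicit frequency splitting.
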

\begin{proof}

    Suppose that $g$ is a function belonging to $\mathbb{L}^{3:\infty}_{6}[T<T^*]$ for some $T^*>0$. Then for any fixed $T<T^*$, we have that 
    \beq
    \|g\|_{\mathcal{L}^3([0,T],\dot{B}^{s_6}_{6,\infty})}\leq T^{\frac{1}{3}}\|g\|_{\mathcal{L}^{\infty}([0,T],\dot{B}^{s_6}_{6,\infty})}.
    \enq  
    Hence we obtain $g\in\mathcal{L}^{3}([0,T],\dot{B}^{s_6}_{6,\infty})\cap\mathcal{L}^{3}([0,T],\dot{B}^{s_6+\frac{2}{3}}_{6,\infty})$. Since that $s_6<0$ and $s_6+\frac{2}{3}>0$, Then by using Proposition 2.22 in \cite{BCD},
    we have
	\beq
	\|g\|_{\mathcal{L}^3([0,T],\dot{B}^{0}_{6,1})}&\leq& \|g\|^{\frac{1}{4}}_{\mathcal{L}^{3}([0,T];\dot{B}_{6,\infty}^{s_6})}\|g\|^{\frac{3}{4}}_{\mathcal{L}^3([0,T],\dot{B}_{6,\infty}^{s_6+\frac{2}{3}})}\\
	&\leq&T^{\frac{1}{3}}\|g\|_{\mathbb{L}^{3:\infty}_{6,\infty}(T)}.
	\enq

	Now we are left with proving that 
	\beq
	\mathcal{L}^3([0,T],\dot{B}^{0}_{6,1})\hookrightarrow L^3([0,T],L^{6,2}).
	\enq 
	By Littlewood-Paley decomposition,
	\beq
	\|g\|_{L^3([0,T],L^{6,2})}\leq \sum_{j\in\mathbb{Z}}\|\La_j g\|_{L^3([0,T],L^{6,2})}.
	\enq 
	And $\La_j g$ can be written as the following convolution form:
	\beq
	\La_j g=\La_j(\La_j g)=2^{3j}\int_{\RR^3}h(2^j(x-y))\La_j g(y)dy.
	\enq 
	By using Young's inequality,
	\beq
	\|\La_j g\|_{L^3([0,T],L^{6,2})}&\lesssim& 2^{3j}\|h(2^j\cdot)\|_{L_x^1}\|\La_j g\|_{L^3([0,T],L^{6})}\\
	&\lesssim&\|\La_j g\|_{L^3([0,T],L^{6})}\lesssim c_j \|g\|_{\mathcal{L}^3([0,T],\dot{B}^{0}_{6,1})},
	\enq 
	where $\sum_{j\in \mathbb{Z}}|c_j|=1$.
	Then we have 
	\beq
	\|g\|_{L^3([0,T],L^{6,2})}\lesssim \|g\|_{\mathcal{L}^3([0,T],\dot{B}^{0}_{6,1})},
	\enq 
	which combined with the fact that
	\beq
	\|g\|_{L^2([0,T],L^{6,2})}\leq T^{\frac{1}{6}} \|g\|_{L^3([0,T],L^{6,2})}.
	\enq 
	The lemma is proved.
\end{proof}

\subsection*{Acknowledgement}
The author is grateful to Isabelle Gallagher for sharing many hindsights about the Navier-Stokes equation and entertaining discussions for overcoming the difficulties during this research.

\end{document}